\newtheorem{theorem}{Theorem}[section]
\newtheorem{definition}[theorem]{Definition}
\newtheorem{lemma}[theorem]{Lemma}
\newtheorem{corollary}[theorem]{Corollary}
\newtheorem{remark}[theorem]{Remark}
\begin{document}
\title[Mixed norm spaces and rearrangement invariant estimates]{Mixed norm spaces and rearrangement invariant estimates}
\author{Nadia Clavero}
\address{Department of  Applied Mathematics and Analysis, University of Barcelona, Gran Via 585, E-08007 Barcelona, Spain}
\email{nadiaclavero@ub.edu}

\author{Javier Soria}
\address{Department of  Applied Mathematics and Analysis, University of Barcelona, Gran Via 585, E-08007 Barcelona, Spain}
\email{soria@ub.edu}

\subjclass[2010]{26D15, 28A35, 46E30}
\keywords{Rearrangement invariant spaces; mixed norm spaces; embeddings; Lorentz spaces}

\thanks{Both authors have been partially supported by the Spanish Government Grant MTM2010-14946.}

\begin{abstract} Our main goal in this work  is to  further improve the mixed norm estimates due to Fournier~\cite{Fournier}, and also   Algervik and Kolyada~\cite{Robert-Viktor}, to more general rearrangement invariant (r.i.) spaces. In particular we find the optimal  domains and the optimal ranges for these embeddings between mixed norm spaces and r.i. spaces.
\end{abstract}
\maketitle

\thispagestyle{empty}

\section{Introduction}

Estimates on mixed norm spaces  $\mathcal{R}(X,Y)$ (see Definition~\ref{definition: mixed norm spaces})  already appeared in  the works of Gagliardo \cite{Gagliardo} and Nirenberg \cite{Nirenberg},  to prove an endpoint case of the classical Sobolev embeddings. However, a more systematic approach to these spaces was first shown explicitly by Fournier \cite{Fournier}.

Recall that  the Sobolev space $W^{1}L^{p}(I^{n}),$ $1\leq p<\infty,$  consists of all functions in $L^{p}(I^{n})$ whose first-order distributional derivatives also belong to $L^{p}(I^{n}).$  We write  $W^{1}_{0}L^{p}(I^{n})$ for the closure of  smooth function, with compact support, in $W^{1}L^{p}(I^{n}).$

The classical Sobolev embedding theorem claims that
\begin{align}\label{eq: inclusion sobolev}
W^{1}_{0}L^{p}(I^{n})\hookrightarrow L^{pn/(n-p)}(I^{n}),\ \ 1\leq p<n.
\end{align}
 Sobolev \cite{Sobolev} proved this embedding for $p>1,$ but his method, based on integral representations, did not work when $p=1.$  That case was settled affirmatively by Gagliardo \cite{Gagliardo} and  Nirenberg~\cite{Nirenberg}, who first  observed that
\begin{align}\label{eq: Gagliardo-Niremberg}
W^{1}_{0}L^{1}(I^{n})\hookrightarrow \mathcal{R}(L^{1},L^{\infty}),
\end{align}
and then, using an iterated form of H\"older's inequality,  completed the proof; i.e.,
 \begin{align*}
W^{1}_{0}L^{1}(I^{n})\hookrightarrow \mathcal{R}(L^{1},L^{\infty})\hookrightarrow L^{n^{\prime}}(I^{n}).
\end{align*}

Later, a new approach based on properties of mixed norm spaces was introduced by Fournier \cite{Fournier} and was subsequently developed, via different
methods, by various authors, including Blei \cite{Blei-Fournier}, Milman \cite{Milman-kfunctional-mixed-norm}, Algervik and Kolyada \cite{Robert-Viktor} and Kolyada \cite{Viktor2012, Viktor2013}. To be more precise, the central part of Fournier's work was  to prove
\begin{align}\label{eq: Fournier}
\mathcal{R}(L^{1},L^{\infty})\hookrightarrow L^{n^{\prime},1}(I^{n}),
\end{align}
and then taking into account \eqref{eq: Gagliardo-Niremberg}, he obtained the following improvement of \eqref{eq: inclusion sobolev}:
 \begin{align}\label{eq: inclusion sobolev Lorentz}
W^{1}_{0}L^{1}(I^{n})\hookrightarrow L^{n^{\prime},1}(I^{n}).
\end{align}
The embedding \eqref{eq: inclusion sobolev Lorentz} is due to  Poornima \cite{Poornima}, and it can be also traced in the work of Peetre \cite{Peetre} (the case $W_0^1L^p$, $p>1$), and Kerman and Pick \cite{Kerman-Pick}, where a characterization of Sobolev  embeddings  in rearrangement invariant  (r.i.) spaces  was obtained.

Since the embedding \eqref{eq: Fournier} was first proved, many other proofs and different extensions have appeared in the literature. In particular, relations between mixed norm  spaces of Lorentz spaces were studied in \cite{Robert-Viktor}, where it was shown, for instance, 
 \begin{align}\label{eq: introduction viktor-robert}
 \mathcal{R}(L^{1},L^{\infty})\hookrightarrow \mathcal{R}(L^{(n-1)^{\prime},1},L^{1}),\ \ n\geq 2.
 \end{align}

All these works provide us a strong motivation to better understand the embeddings between mixed norm spaces, as well as to provide a characterization of   \eqref{eq: Fournier} for more general r.i. spaces.
\medskip

The paper is organized as follows. A precise definition of r.i. spaces, and also other definitions and properties concerning function spaces to be used throughout the paper can be found in Section~\ref{sec: preliminaries}.

In Section~\ref{mixed norm spaces} we introduce  the  Benedek-Panzone spaces and the mixed norm spaces. Here, among other things, we  find an explicit formula for the Peetre $K$-functional for the couple of mixed norm spaces $(\mathcal{R}(X,L^{\infty}),L^{\infty}).$ 

In Section~\ref{section: Embeddings between mixed norm spaces}, we explore connections between  mixed norm spaces. It is important to observe that, for any $k\in \left\{1,\ldots,n\right\},$ 
\begin{align}\label{eq: equivalencia}
\mathcal{R}_{k}(X_{1},Y_{1})\hookrightarrow \mathcal{R}_{k}(X_{2},Y_{2})\Leftrightarrow \begin{cases}X_{1}(I^{n-1})\hookrightarrow X_{2}(I^{n-1}),\\ Y_{1}(I)\hookrightarrow Y_{2}(I),\end{cases}
\end{align}
(see Lemma~\ref{lema: inclusion the Benedek-Panzone spaces}). That is, looking at each specific component, the  embeddings are trivial. However,  there are examples, for instance \eqref{eq: introduction viktor-robert}, showing that if in \eqref{eq: equivalencia} we replace Benedek-Panzone spaces by  the global mixed norm spaces, then the corresponding equivalence is not longer true. This fact illustrates that mixed norm spaces may have a much more complicated structure than Benedek-Panzone spaces. Therefore, it is natural to analyze  embeddings between mixed norm spaces. 

Motivated by this problem  we  find necessary and sufficient conditions for the existence of the following types of embeddings:
\begin{align*}
\begin{array}{lll}
\mathcal{R}(L^{\infty},Y_{1})\hookrightarrow \mathcal{R}(X_{2},Y_{2}),&  \mathcal{R}(X_{1},L^{\infty})\hookrightarrow \mathcal{R}(X_{2},L^{\infty}),&  \mathcal{R}(X_{1},Y)\hookrightarrow \mathcal{R}(X_{2},Y).
\end{array}
\end{align*}
After this  discussion our analysis focuses on a particular embedding:
\begin{align}\label{eq: introduction R(X1,Loo)-> R(X2,Lp1)}
 \mathcal{R}(X_{1},L^{\infty})\hookrightarrow \mathcal{R}(X_{2},L^{1}).
\end{align}
To be more specific, Theorem~\ref{teo: rango R(X1,Loo)->R(X2,L1)} provides a characterization of the smallest mixed norm space of the form $\mathcal{R}(X_2,L^{1})$ in \eqref{eq: introduction R(X1,Loo)-> R(X2,Lp1)}, once the mixed norm space $\mathcal{R}(X_1,L^{\infty})$ is given. To finish, we make some comments for the case when $L^1$ is replaced by $L^{p,1}$ (see also \cite{Robert-Viktor}).

Section~\ref{section: Fournier embeddings} is devoted to the study  of the embedding \eqref{eq: Fournier} for more general r.i. spaces. In particular, Theorem~ \ref{theorem: mixta inclusion R(X,Loo)->Z}  gives necessary and sufficient conditions for the following embedding to hold:
\begin{align}\label{eq: intro R(X,Loo)->Z}
\mathcal{R}(X,L^{\infty})\hookrightarrow Z(I^{n}).
\end{align}
A general consequence of Theorem~\ref{theorem: mixta inclusion R(X,Loo)->Z} is contained in Theorem~\ref{theorem: dominio R(X,Loo)->Z}, which provides
a characterization of the largest space of the form $\mathcal{R}(X,L^{\infty})$ in \eqref{eq: intro R(X,Loo)->Z}, once the r.i. space $Z(I^{n})$ is given.  Finally, for a fixed mixed norm space $\mathcal{R}(X,L^{\infty})$, Theorem~\ref{teo: rango R(X,Loo)->Z} describes the smallest r.i. space for which \eqref{eq: intro R(X,Loo)->Z} holds.

Some remarks about the notation: The measure of the unit ball in $\mathbb R^{n}$ will be represented by $\omega_{n}.$ As usual, we  use the symbol $A\lesssim B$ to indicate that there exists a universal positive constant $C$, independent of all
important parameters, such that $A\leq CB$. The equivalence $A\approx B$ means that $A\lesssim B$ and $B\gtrsim A$. Finally, the arrow  $\hookrightarrow$ stands for a continuous embedding.

\section{Preliminaries}\label{sec: preliminaries}
We collect in this section some basic notations and concepts that will be useful in what follows.

Let $n\in\mathbb N,$ with $n\geq 1$ and let $I\subset \mathbb R$ be a finite interval. We write  $\mathcal{M}(I^{n})$ for the set of all real-valued measurable functions on $I^{n}.$ 

Given $f\in\mathcal{M}(I^{n}),$ its distribution function $\lambda_{f}$  is defined by
\begin{align*}
\lambda_{f} (t) = |\bigl\{x\in I^{n}: |f (x)| > t\bigr\}|,\ \ t\geq  0,
\end{align*}
(where $|\cdot|$ denotes the Lebesgue measure) and the decreasing rearrangement $f^{*}$  of $f$ is given by
\begin{align*}
f^{*}(t) =  \inf\bigl\{s\geq 0: \lambda_{f}(s)\leq t\bigr\},\ \ t\geq 0.
\end{align*}
It is easily seen that if $g$ is any radial function on $\mathbb R^{n}$ of the form $g(x)=f^{*}(\omega_{n}|x|^{n}),$ for some $f\in\mathcal{M}(I^{n}),$ then $g^{*}=f^{*}.$

As usual, we shall use the notation $f^{**}(t)=t^{-1}\int^{t}_{0}f^{*}(s)ds.$

A basic property of rearrangements is the Hardy-Littlewood inequality (cf. e.g. \cite[Theorem II.2.2]{Bennett}), which says: 
\begin{align*}
\int_{I^{n}}|f(x)g(x)|dx\leq \int^{|I|^{n}}_{0}f^{*}(t)g^{*}(t)dt,\ \ f,g\in\mathcal{M}(I^{n}).
\end{align*}

A rearrangement invariant Banach function space $X(I^{n})$  (briefly an r.i. space) is the collection of all $f\in\mathcal{M}(I^{n})$ for which $\bigl\|f\bigr\|_{X(I^{n})}<\infty,$ where $\bigl\|\cdot\bigr\|_{X(I^{n})}$ satisfies the following properties:
 \begin{enumerate}[ ({A}1)]
	\item $\bigl\|\cdot\bigr\|_{X(I^{n})}$ is a norm;
	\item if $0\leq g\leq f\ \textnormal{a.e.,}$ then  $ \bigl\|g\bigr\|_{X(I^{n})}\leq \bigl\|f\bigr\|_{X(I^{n})};$
	\item if $0\leq f_{j}\uparrow f\ \textnormal{a.e.,}$ then $\bigl\|f_{j}\bigr\|_{X(I^{n})}\uparrow \bigl\|f\bigr\|_{X(I^{n})};$
	\item $\bigl\|\chi_{I^{n}}\bigr\|_{X(I^{n})}<\infty;$
	\item $\int_{I^{n}}|f(x)|dx\lesssim \bigl\|f\bigr\|_{X(I^{n})};$ 
\item if $f^{*}=g^{*},$ then $\bigl\|f\bigr\|_{X(I^{n})}=\bigl\|g\bigr\|_{X(I^{n})}.$
\end{enumerate}

Given an r.i. space $X(I^{n}),$ the set 
\begin{align*}
X^{\prime}(I^{n})=\Bigl\{f\in\mathcal{M}(I^{n}):\int_{I^{n}}|f(x)g(x)|dx<\infty,\ \textnormal{for any $g\in X(I^{n})$}\Bigr\},
\end{align*}
equipped with the norm
\begin{align*}
\bigl\|f\bigr\|_{X^{\prime}(I^{n})}=\sup_{\bigl\|g\bigr\|_{X(I^{n})}\leq 1}\int_{I^{n}}|f(x)g(x)|dx
\end{align*}
is called the associate space of $X(I^{n})$. It turns out that $X^{\prime}(I^{n})$ is again an r.i. space \cite[Theorem~I.2.2]{Bennett}. The fundamental function of an r.i. space $X(I^{n})$ is given by
\begin{align*}
\varphi_{X}(t)=\bigl\|\chi_{E}\bigr\|_{X(I^{n})},
\end{align*}
where $\left|E\right|=t$ and $\chi_{E}$ denotes the characteristic function of the set $E\subset I^{n}$. It is known \cite[Theorem~5.2]{Lenka} that if $X(I^{n})$ is an r.i. space, then
\begin{align}\label{eq: abs. cont} 
X(I^{n})\neq L^{\infty}(I^{n})\Leftrightarrow \lim_{t\rightarrow0^{+}}\varphi_{X}(t)=0.
\end{align}
The Lorentz space $\Lambda_{\varphi_{X}}$ consists of all $f\in\mathcal{M}(I^{n})$ for which the expression
\begin{align*}
\left\|f\right\|_{\Lambda_{\varphi_{X}}}=\left\|f\right\|_{L^{\infty}(I^{n})}\varphi_{X}(0^{+})+\int^{|I|^{n}}_{0}f^{*}(t)\varphi^{\prime}_{X}(t)dt
\end{align*}
is finite. It is well-known \cite[Theorem II.5.13]{Bennett} that if $X(I^{n})$ is an r.i. space then,
\begin{align*}
\Lambda_{\varphi_{X}}\hookrightarrow X(I^{n}).
\end{align*}
A basic tool for working with r.i. spaces is the Hardy-Littlewood-P\'olya Principle  (cf. \cite[Proposition~II.3.6]{Bennett}) which asserts that if $f\in X(I^{n})$ and 
\begin{align*}
\int^{t}_{0}g^{*}(s)ds\leq \int^{t}_{0}f^{*}(s)ds,\ \ 0<t<|I|^{n},
\end{align*}
then $g\in X(I^{n})$ and $\bigl\|g\bigr\|_{X(I^{n})}\leq \bigl\|f\bigr\|_{X(I^{n})}.$

For later purposes, let us recall the Luxemburg \textit{Representation Theorem} \cite[Theorem~II.4.10]{Bennett}. It says that given an r.i. space $X(I^n)$, there exists another r.i. space $\overline{X}(0,|I|^{n})$  such that 
$$f\in X(I^{n})\Longleftrightarrow f^{*}\in\overline{X}(0,|I|^{n}),$$
 and in this case $\bigl\|f\bigr\|_{X(I^{n})}=\bigl\|f^{*}\bigr\|_{\overline{X}(0,|I|^{n})}.$

Next, we define the Boyd indices of an r.i. space. First we introduce
the dilation operator:
\begin{align*}
E_{t}f(s)=\begin{cases}
f(s/t),& \textnormal{if $0\leq s\leq \min (|I|^{n},t|I|^{n}),$}\\
0,&\textnormal{otherwise,}
\end{cases}\ \ t>0,\ f\in\mathcal{M}(0,|I|^{n}).
\end{align*}
Let us recall that the operator $E_{t}$ is bounded on $\overline{X}(0, |I|^{n}),$ for every r.i. space $X(I^{n})$ and for every $t>0.$ 

By means of the norm
of $E_{t}$ on $\overline{X}(0,|I|^{n})$, denoted by $h_{X}(t),$ we define the lower and upper Boyd indices of $X(I^{n})$  as
\begin{align*}
\underline{\alpha}_{X}=\sup_{0<t<1}\frac{\log h_{X}(t)}{\log(t)}\ \ \textnormal{and}\ \  \overline{\alpha}_{X}=\inf_{1<t<\infty}\frac{\log h_{X}(t)}{\log(t)}.
\end{align*}
It is easy to see that $0\leq \underline{\alpha}_{X}\leq \overline{\alpha}_{X}\leq 1.$

Basic examples of r.i. spaces are the Lebesgue spaces $L^{p}(I^{n}),$ $1\leq p\leq \infty.$ Another important class of r.i. spaces is provided by the Lorentz  spaces. We recall that the Lorentz space $L^{p,q}(I^{n})$, with $1<p<\infty$ and $1\leq q\leq \infty$ or $p=q=\infty,$ is
the r.i. space consisting of all $f\in\mathcal{M}(I^{n})$ for which the quantity
\begin{align*}
\bigl\|f\bigr\|_{L^{p,q}(I^{n})}=
\begin{cases}\displaystyle{
\bigg(\int^{|I|^{n}}_{0}\bigl[s^{1/p}f^{*}(s)\bigl]^{q}\frac{ds}{s}\bigg)^{1/q}},& \textnormal{if $q<\infty,$}\\
\displaystyle{\sup_{0<t<|I|^{n}}t^{1/p}f^{*}(t)},&\textnormal{if $q=\infty$}
\end{cases}
\end{align*}
is finite. Observe that $L^{p,p}(I^{n})=L^{p}(I^{n}).$ For a comprehensive treatment of r.i. spaces we refer the reader to \cite{Bennett,Grafakos}.

Finally,  let us  recall some special results from Interpolation Theory, which we shall need in what follows.  For further information on this topic see \cite{Bennett, Bergh,Brudny}.

Given a pair of compatible Banach spaces $(X_{0}, X_{1})$ (compatible in the sense that they are continuously embedded into a common Hausdorff topological vector space), their $K$-functional is defined, for each $f\in X_{0}+X_{1},$ by
\begin{align*}
K(f,t; X_{0},X_{1}):=\inf_{f=f_{0}+f_{1}}(\bigl\|f_{0}\bigr\|_{X_{0}}+t\bigl\|f_{1}\bigr\|_{X_{1}}),\ \ t>0.
\end{align*}

The fundamental result  concerning the $K$-functional is:
\begin{theorem}\label{interpolkf} Let $(X_{0},X_{1})$ and $(Y_{0},Y_{1})$ be two compatible pairs of Banach spaces and let $T$ be a sublinear operator satisfying
\begin{align*}
T\:: X_{0}\rightarrow Y_{0},\ \  \textnormal{and}\ \  T\:: X_{1}\rightarrow Y_{1}.
\end{align*}
Then, there exists a constant $C>0$ (depending only on the norms of $T$ between $X_{0}$ and $Y_{0}$ and between $X_{1}$ and $Y_{1}$) such that
\begin{align*}
K(Tf,t; Y_{0},Y_{1})\leq C K(f, C t; X_{0}, X_{1}),\ \ \textnormal{for every $f\in X_{0}+X_{1}$ and $t>0.$}
\end{align*} 
\end{theorem}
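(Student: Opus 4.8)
The plan is to build, from any admissible decomposition of $f$, an admissible decomposition of $Tf$, to estimate each piece through the boundedness of $T$ on the respective endpoint space, and then to pass to the infimum. Throughout, set $M_0=\bigl\|T\bigr\|_{X_0\rightarrow Y_0}$ and $M_1=\bigl\|T\bigr\|_{X_1\rightarrow Y_1}$, and let $C=\max(M_0,M_1,1)$; this will be the constant in the conclusion. Fix $f\in X_0+X_1$ and $t>0$, and let $f=f_0+f_1$ be an arbitrary decomposition with $f_0\in X_0$ and $f_1\in X_1$.

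When $T$ is linear the construction is immediate, since $Tf=Tf_0+Tf_1$ is already a decomposition of $Tf$ into $Y_0+Y_1$, with $\bigl\|Tf_0\bigr\|_{Y_0}\leq M_0\bigl\|f_0\bigr\|_{X_0}$ and $\bigl\|Tf_1\bigr\|_{Y_1}\leq M_1\bigl\|f_1\bigr\|_{X_1}$. The only genuine subtlety, and the step I expect to be the main obstacle, is the sublinear case, where $T$ need not respect sums. Here sublinearity still yields the pointwise control $|Tf|\leq |Tf_0|+|Tf_1|$, so I would split $Tf$ according to where each term dominates: set $g_0=Tf\cdot |Tf_0|/(|Tf_0|+|Tf_1|)$ and $g_1=Tf\cdot |Tf_1|/(|Tf_0|+|Tf_1|)$ (and $g_0=g_1=0$ where the denominator vanishes). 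Then $Tf=g_0+g_1$, while $|g_0|\leq |Tf_0|$ and $|g_1|\leq |Tf_1|$ pointwise. Since the target spaces are Banach function spaces, the monotonicity of the norm (property (A2)) gives $\bigl\|g_0\bigr\|_{Y_0}\leq \bigl\|Tf_0\bigr\|_{Y_0}\leq M_0\bigl\|f_0\bigr\|_{X_0}$ and, in the same way, $\bigl\|g_1\bigr\|_{Y_1}\leq M_1\bigl\|f_1\bigr\|_{X_1}$.

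With the admissible decomposition $Tf=g_0+g_1$ in hand, the definition of the $K$-functional and the choice $C=\max(M_0,M_1,1)\geq 1$ give
\begin{align*}
K(Tf,t;Y_0,Y_1)\leq \bigl\|g_0\bigr\|_{Y_0}+t\bigl\|g_1\bigr\|_{Y_1}\leq M_0\bigl\|f_0\bigr\|_{X_0}+M_1 t\bigl\|f_1\bigr\|_{X_1}\leq C\bigl(\bigl\|f_0\bigr\|_{X_0}+Ct\bigl\|f_1\bigr\|_{X_1}\bigr),
\end{align*}
where the last inequality uses $M_0\leq C$ and $M_1\leq C\leq C^2$. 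Taking the infimum over all decompositions $f=f_0+f_1$ on the right-hand side turns the bracket into $K(f,Ct;X_0,X_1)$ and produces exactly
\begin{align*}
K(Tf,t;Y_0,Y_1)\leq C\, K(f,Ct;X_0,X_1),
\end{align*}
as claimed. Everything outside the sublinear splitting is bookkeeping: the control of each piece reduces to the two endpoint operator norms, and the factor $Ct$ is precisely what is needed to absorb $M_1$ once the constant is normalized to be at least $1$. I would also remark that, since $K(f,\cdot;X_0,X_1)$ is nondecreasing and $K(f,\lambda t;X_0,X_1)\leq \lambda K(f,t;X_0,X_1)$ for $\lambda\geq 1$, the same argument yields the unscaled form $K(Tf,t;Y_0,Y_1)\leq \max(M_0,M_1)\,K(f,t;X_0,X_1)$, so the two formulations are equivalent up to the value of the constant.
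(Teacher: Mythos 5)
Your proof is correct. Note that the paper itself gives no proof of Theorem~\ref{interpolkf}: it is stated as a classical fact with pointers to \cite{Bennett, Bergh, Brudny}, so the only comparison available is with those standard sources, and your argument is essentially the textbook one (transfer an arbitrary decomposition $f=f_0+f_1$ to a decomposition of $Tf$, estimate each piece by the endpoint operator norms, take the infimum). Your handling of the sublinear case via the proportional splitting $g_0=Tf\,|Tf_0|/(|Tf_0|+|Tf_1|)$, $g_1=Tf\,|Tf_1|/(|Tf_0|+|Tf_1|)$ is sound: where the denominator vanishes, sublinearity gives $|Tf|\le|Tf_0|+|Tf_1|=0$, so setting $g_0=g_1=0$ there still yields $Tf=g_0+g_1$ with $|g_0|\le|Tf_0|$ and $|g_1|\le|Tf_1|$ pointwise; an equivalent and slightly more common device is $g_0=\operatorname{sgn}(Tf)\min\bigl(|Tf|,|Tf_0|\bigr)$ and $g_1=Tf-g_0$. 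The one point worth making explicit is that your appeal to lattice monotonicity (A2) requires $Y_0$ and $Y_1$ to be Banach function spaces over a common measure space, while the theorem is phrased for abstract compatible Banach couples; this is not a substantive gap, since the very notion of a sublinear operator (the pointwise inequality $|T(f_0+f_1)|\le|Tf_0|+|Tf_1|$) already presupposes such a structure on the target couple, and in all of the paper's applications the ranges are mixed norm or r.i. spaces where (A2) holds. Your constant bookkeeping ($M_0\le C$ and $M_1\le C^2$ with $C=\max(M_0,M_1,1)\ge1$) is correct, and your closing observation is right: the same computation gives the stronger unscaled bound $K(Tf,t;Y_0,Y_1)\le\max(M_0,M_1)\,K(f,t;X_0,X_1)$, from which the scaled form in the statement follows because $K(f,\cdot\,;X_0,X_1)$ is nondecreasing and $C\ge1$.
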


%

\section{Mixed norm spaces}\label{mixed norm spaces}
In what follows and throughout the paper we shall assume $n\geq 2.$

Our goal in this section is to present some basic properties of mixed norm spaces.
Let $k\in\left\{1,\ldots,n\right\}.$ We write $\widehat{x_{k}}$  for the point in $I^{n-1}$ obtained from a given vector $x\in I^{n}$ by removing its $k\textnormal{th}$ coordinate. That is,
\begin{align*}
\widehat{x_{k}}=(x_{1},\ldots,x_{k-1},x_{k+1},\ldots,x_{n})\in I^{n-1}.
\end{align*}
Moreover, for any $f\in\mathcal{M}(I^{n}),$ we use the notation $f_{\widehat{x_{k}}}$  for the function  obtained from $f,$ with   $\widehat{x_{k}}$ fixed. Let us  recall  that, since $f$ is measurable,  $f_{\widehat{x}_{k}}$ is also measurable for a.e. $\widehat{x_{k}}$.

For later purposes, let us first enumerate  some geometric properties of the projections. We refer to the book  \cite{Halmos} for  basic facts on this topic. 

Let $E\subset I^{n}$ be any measurable set and let  $\widehat{x_{k}}\in I^{n-1}$ be fixed.  The  $x_{k}\textnormal{-section}$ of $E$   is defined as
\begin{align*}
E(\widehat{x_{k}})=\left\{x_{k}\in I: (\widehat{x_{k}},x_{k})\in E\right\}.
\end{align*}
Let us    emphasize that since $E$ is measurable, its $x_{k}\textnormal{-section}$ is also measurable, for a.e. $\widehat{x_{k}}$. The essential projection of $E$ onto the hyperplane $x_{k}=0$ is defined as
\begin{align*}
\Pi^{*}_{k}E=\left\{\widehat{x}_{k}\in I^{n-1}: |E(\widehat{x_{k}})|>0\right\}.
\end{align*}

An important result  is the so-called Loomis-Whitney inequality \cite{Loomis-Whitney, Fournier, Robert-Viktor} which says that
\begin{align}\label{eq: version Loomis-Whitney}
\bigl|E\bigr|\leq \prod^{n}_{k=1}\bigl|\Pi^{*}_{k} E\bigr|^{1/(n-1)}.
\end{align}

We now recall the Benedek-Panzone spaces, which were introduced in~\cite{Benedek-Panzone}.  For further information on this topic see \cite{Buhvalov, Blozinski, Boccuto-Bukhvalov-Sambucini,Barza-Kaminska-Persson-Soria}.

\begin{definition}\label{definition: Benedek-Panzone} Let $k\in \left\{1,\ldots,n\right\}.$  Given two r.i. spaces $X(I^{n-1})$ and  $Y(I),$ the  Benedek-Panzone space $\mathcal{R}_{k}(X,Y)$ is defined  as the collection of all $f\in\mathcal{M}(I^{n})$ satisfying
\begin{align*}
\bigl\|f\bigr\|_{\mathcal{R}_{k}(X,Y)}=\bigl\|\psi_{k}(f,Y)\bigr\|_{X(I^{n-1})}<\infty,
\end{align*}
where $\psi_{k}(f,Y)(\widehat{x}_{k})=\bigl\|f(\widehat{x}_{k},\cdot)\bigr\|_{Y(I)}.$
\end{definition}

Buhvalov \cite{Buhvalov} and  Blozinski \cite{Blozinski} proved that   $\mathcal{R}_{k}(X,Y)$  is a Banach function space. Moreover Boccuto, Bukhvalov, and Sambucini \cite{Boccuto-Bukhvalov-Sambucini} proved that $\mathcal{R}_{k}(X,Y)$ is an r.i. space, if and only if $X=Y=L^p$.

Now, we shall give the definition of the  mixed norm spaces, sometimes also called symmetric mixed norm spaces.

\begin{definition}\label{definition: mixed norm spaces}Given two r.i. spaces  $X(I^{n-1})$ and  $Y(I),$ the mixed norm space $\mathcal{R}(X,Y)$ is defined as
\begin{align*}
\mathcal{R}(X,Y)=\bigcap^{n}_{k=1}\mathcal{R}_{k}(X,Y).
\end{align*}
For each $f\in\mathcal{R}(X,Y),$ we set  $\bigl\|f\bigr\|_{\mathcal{R}(X,Y)}=\sum^{n}_{k=1}\bigl\|f\bigr\|_{\mathcal{R}_{k}(X,Y)}.$
\end{definition}
 
It is not difficult to verify that  $\mathcal{R}(X,Y)$ is a Banach function space. 

Since the pioneering works of Gagliardo~\cite{Gagliardo}, Nirenberg \cite{Nirenberg}, and Fournier \cite{Fournier}, many useful properties and generalizations of these spaces have been  studied, via different methods, by various authors, including Blei  \cite{Blei-Fournier}, Milman~\cite{Milman-kfunctional-mixed-norm}, Algervik and Kolyada \cite{Robert-Viktor}, and Kolyada \cite{Viktor2012, Viktor2013}.

All these works, together with the embedding \eqref{eq: Gagliardo-Niremberg}, provide us a strong motivation to better understand the mixed norm spaces of the form $\mathcal{R}(X,L^{\infty}).$ For this,  we start with a useful lemma:
\begin{lemma}\label{lemma: essential projection property} Let $f\in\mathcal{M}(I^{n})$ and let $E_{\alpha}=\left\{x\in I^{n}: \left|f(x)\right|>\alpha\right\},$ with $\alpha\geq 0.$ Then,
\begin{align*}
\Pi^{*}_{k}E_{\alpha}=\left\{\widehat{x}_{k}\in I^{n-1}\:: \psi_{k}(f,L^{\infty})(\widehat{x}_{k})>\alpha\right\}.
\end{align*}
\end{lemma}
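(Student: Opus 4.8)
The plan is to prove set equality by showing both inclusions, working directly from the definitions of the essential projection and of the $L^\infty$-norm along sections. Fix $\widehat{x}_k\in I^{n-1}$ and recall that by definition $\psi_k(f,L^\infty)(\widehat{x}_k)=\bigl\|f(\widehat{x}_k,\cdot)\bigr\|_{L^\infty(I)}=\esssup_{x_k\in I}|f(\widehat{x}_k,x_k)|$, while $\Pi^*_kE_\alpha$ consists of those $\widehat{x}_k$ for which the $x_k$-section $E_\alpha(\widehat{x}_k)=\{x_k\in I:|f(\widehat{x}_k,x_k)|>\alpha\}$ has positive measure. So the statement reduces to the pointwise equivalence
\begin{align*}
\bigl|\{x_k\in I:|f(\widehat{x}_k,x_k)|>\alpha\}\bigr|>0\quad\Longleftrightarrow\quad \esssup_{x_k\in I}|f(\widehat{x}_k,x_k)|>\alpha.
\end{align*}

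For the direction ($\Leftarrow$), suppose $\esssup_{x_k}|f(\widehat{x}_k,x_k)|>\alpha$. By the definition of the essential supremum as the infimum of all essential upper bounds, $\alpha$ fails to be an essential upper bound, which means precisely that $|f(\widehat{x}_k,\cdot)|>\alpha$ on a set of positive measure; hence $|E_\alpha(\widehat{x}_k)|>0$ and $\widehat{x}_k\in\Pi^*_kE_\alpha$. For the direction ($\Rightarrow$), suppose $|E_\alpha(\widehat{x}_k)|>0$. Then $|f(\widehat{x}_k,\cdot)|>\alpha$ on a positive-measure set, so no number $\leq\alpha$ can be an essential upper bound for $|f(\widehat{x}_k,\cdot)|$, whence the essential supremum, being the least essential upper bound, strictly exceeds $\alpha$; thus $\psi_k(f,L^\infty)(\widehat{x}_k)>\alpha$.

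The only subtlety worth flagging is the measurability caveat already noted in the text: the identities hold for a.e.\ $\widehat{x}_k$, since $f_{\widehat{x}_k}$ is measurable and the section $E_\alpha(\widehat{x}_k)$ is measurable only for a.e.\ $\widehat{x}_k$ (by the Fubini-type section theorem in \cite{Halmos}), but this is exactly the same almost-everywhere qualification under which both sides are defined, so no genuine obstacle arises. In short, the entire proof is the observation that ``positive measure of the super-level set'' and ``essential supremum exceeds the level'' are two phrasings of the same fact; the main point is simply to verify that the strict inequalities match up correctly under the convention $f^*(t)=\inf\{s\geq 0:\lambda_f(s)\leq t\}$ implicit in the $L^\infty$ framework.
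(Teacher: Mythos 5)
Your proof is correct and takes essentially the same route as the paper's: both reduce the lemma to the pointwise equivalence between $\bigl|\{x_{k}\in I:|f(\widehat{x}_{k},x_{k})|>\alpha\}\bigr|>0$ and $\psi_{k}(f,L^{\infty})(\widehat{x}_{k})>\alpha$, the paper arguing one inclusion contrapositively via the formula $\psi_{k}(f,L^{\infty})(\widehat{x}_{k})=\inf\bigl\{s\geq 0:\bigl|\{x_{k}\in I:|f(\widehat{x}_{k},x_{k})|>s\}\bigr|=0\bigr\}$, which is just your least-essential-upper-bound characterization. The only point both treatments leave implicit is that the strict inequality in the direction ``positive section measure implies $\psi_{k}>\alpha$'' rests on the essential supremum being itself an essential upper bound (equivalently, right-continuity of the distribution function), which your phrase ``the \emph{least} essential upper bound'' correctly invokes.
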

\begin{proof}
To prove this lemma, it is enough to consider $\alpha>0.$  Let us see that
\begin{align}\label{eq: 1essential projection property}
\left\{\widehat{x}_{k}\in I^{n-1}\:: \psi_{k}(f,L^{\infty})(\widehat{x}_{k})>\alpha\right\}\subset \Pi^{*}_{k}E_{\alpha}.
\end{align}
In fact, if $\widehat{x}_{k}\notin \Pi^{*}_{k}E_{\alpha},$ then, by definition of $\Pi^{*}_{k}E_{\alpha},$ we have
$$\left|\left\{x_{k}\in I\::\left|f(\widehat{x}_{k},x_{k})\right|>\alpha\right\}\right|=0.$$
But
\begin{align*}
\psi_{k}(f,L^{\infty})(\widehat{x}_{k})=\inf\left\{s\geq 0\:: \left|\left\{x_{k}\in I\::\left|f(\widehat{x}_{k},x_{k})\right|>s\right\}\right|=0 \right\},\ \ \text{a.e.}\ \widehat{x_{k}}\in I^{n-1},
\end{align*}
 and hence, we get  $\psi_{k}(f,L^{\infty})(\widehat{x}_{k})\leq \alpha.$ As a consequence, we have
$$\widehat{x}_{k}\notin \left\{\widehat{x}_{k}\in I^{n-1}\:: \psi_{k}(f,L^{\infty})(\widehat{x}_{k})>\alpha\right\}.$$
 This proves that  \eqref{eq: 1essential projection property}  holds. To complete the proof, it only remains to see that 
\begin{align*}
\Pi^{*}_{k}E_{\alpha}\subset \left\{\widehat{x}_{k}\in I^{n-1}\:: \psi_{k}(f,L^{\infty})(\widehat{x}_{k})>\alpha\right\}.
\end{align*}
 In fact, if $\widehat{x}_{k}\in \Pi^{*}_{k}E_{\alpha},$ then $
\left|\left\{x_{k}\in I\::\left|f(\widehat{x}_{k},x_{k})\right|>\alpha\right\}\right|>0.$
 So, by definition,  $\psi_{k}(f,L^{\infty})(\widehat{x}_{k})>\alpha.$
Therefore, $\widehat{x}_{k}\in \left\{\widehat{x}_{k}\in I^{n-1}\:: \psi_{k}(f,L^{\infty})(\widehat{x}_{k})>\alpha\right\}.$ 
Thus, the proof is complete. 
\end{proof}

 As an immediate consequence of inequality \eqref{eq: version Loomis-Whitney}  and Lemma \ref{lemma: essential projection property}, we have  the following  inequality, which was previously  proved in \cite{Fournier}: 
\begin{corollary}\label{corollary: distribution and essential projection } Let  $f\in\mathcal{M}(I^{n}).$ Then, for any $t>0,$
\begin{align*}
\lambda_{f}(t)\leq \Bigl(\prod^{n}_{k=1}\lambda_{\psi_{k}(f,L^{\infty})}(t)\Bigr)^{1/(n-1)}.
\end{align*}
\end{corollary}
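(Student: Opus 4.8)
The plan is to apply the Loomis--Whitney inequality \eqref{eq: version Loomis-Whitney} directly to the super-level sets of $f$ and then translate the measures of the essential projections into distribution functions of the $\psi_{k}(f,L^{\infty})$ by means of Lemma~\ref{lemma: essential projection property}. First I would fix $t>0$ and set $E_{t}=\left\{x\in I^{n}:\left|f(x)\right|>t\right\}$, so that by definition $\lambda_{f}(t)=\left|E_{t}\right|$. Since $E_{t}$ is measurable, the inequality \eqref{eq: version Loomis-Whitney} applies and gives
\begin{align*}
\lambda_{f}(t)=\left|E_{t}\right|\leq \prod^{n}_{k=1}\left|\Pi^{*}_{k}E_{t}\right|^{1/(n-1)}.
\end{align*}

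Next I would invoke Lemma~\ref{lemma: essential projection property} with $\alpha=t$, which identifies each essential projection as
\begin{align*}
\Pi^{*}_{k}E_{t}=\left\{\widehat{x}_{k}\in I^{n-1}\:: \psi_{k}(f,L^{\infty})(\widehat{x}_{k})>t\right\}.
\end{align*}
Taking $(n-1)$-dimensional Lebesgue measure of both sides, the right-hand side is exactly the value at $t$ of the distribution function of $\psi_{k}(f,L^{\infty})$; that is, $\left|\Pi^{*}_{k}E_{t}\right|=\lambda_{\psi_{k}(f,L^{\infty})}(t)$. Substituting this into the displayed bound yields the claimed inequality.

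There is no genuine obstacle here, so the work is essentially bookkeeping: the only point requiring a word of care is that Lemma~\ref{lemma: essential projection property} describes $\Pi^{*}_{k}E_{t}$ through $\psi_{k}(f,L^{\infty})$, a function that is defined only for a.e.\ $\widehat{x}_{k}\in I^{n-1}$ because the sections $f_{\widehat{x}_{k}}$ are measurable only almost everywhere; this affects neither the measurability of $\Pi^{*}_{k}E_{t}$ nor its measure, so the identification of $\left|\Pi^{*}_{k}E_{t}\right|$ with $\lambda_{\psi_{k}(f,L^{\infty})}(t)$ is unaffected. Since $t>0$ was arbitrary, the corollary follows.
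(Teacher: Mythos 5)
Your proof is correct and follows exactly the route the paper intends: it presents the corollary as an immediate consequence of the Loomis--Whitney inequality \eqref{eq: version Loomis-Whitney} applied to the level set $E_{t}$, together with Lemma~\ref{lemma: essential projection property} identifying $\Pi^{*}_{k}E_{t}$ with the level set of $\psi_{k}(f,L^{\infty})$, so that $\left|\Pi^{*}_{k}E_{t}\right|=\lambda_{\psi_{k}(f,L^{\infty})}(t)$. Your remark on the almost-everywhere definition of $\psi_{k}(f,L^{\infty})$ is a sensible extra precaution and does not change the argument.
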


Our next goal is to describe the $K$-functional for  pairs of the form $(\mathcal{R}(X,L^{\infty}),L^{\infty}).$ Then, we shall apply it  to the characterization of the
r.i. hull of a mixed norm space of the form $\mathcal{R}(X,L^{\infty})$ (see Theorem~\ref{teo: rango R(X,Loo)->Z}). We refer to the paper \cite{Milman-kfunctional-mixed-norm}, for other results concerning  interpolation of mixed norm spaces.

We first prove a lower bound  for the  $K$-functional for  the couple of mixed norm spaces $(\mathcal{R}(X,Y),\mathcal{R}(L^{\infty},Y)).$
\begin{lemma}\label{lemma: lower bound for K-functional} Let $X(I^{n-1})$ and $Y(I)$ be r.i. spaces. Then,
\begin{align*}
\sum^{n}_{k=1}\bigl\|\psi^{*}_{k}(f,Y)\chi_{(0,t)}\bigr\|_{\overline{X}(0,|I|^{n-1})}\lesssim K\bigl(f,\varphi_{X}(t), \mathcal{R}(X,Y),\mathcal{R}(L^{\infty},Y)\bigr),\ \ 0<t<|I|^{n-1}.
\end{align*}
\end{lemma}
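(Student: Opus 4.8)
The plan is to estimate the $K$-functional from below by producing, for any admissible decomposition, a useful pointwise/distributional control on the rearrangements $\psi_k^*(f,Y)$. Fix $0<t<|I|^{n-1}$ and let $f=g+h$ be an arbitrary decomposition with $g\in\mathcal{R}(X,Y)$ and $h\in\mathcal{R}(L^\infty,Y)$. The basic tool is that for each fixed $k$, the map $f\mapsto \psi_k(f,Y)$ is subadditive in the sense that $\psi_k(f,Y)(\widehat{x_k})\le \psi_k(g,Y)(\widehat{x_k})+\psi_k(h,Y)(\widehat{x_k})$ for a.e.\ $\widehat{x_k}$, since $\|\cdot\|_{Y(I)}$ is a norm acting on the $x_k$-slices. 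Thus $\psi_k(f,Y)$ is itself a sum of a function controlled in $X(I^{n-1})$ and a function controlled in $L^\infty(I^{n-1})$, which means I should recognize the right-hand side as being governed by the $K$-functional for the \emph{scalar} couple $(X,L^\infty)$ on $I^{n-1}$ applied to $\psi_k(f,Y)$.

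Concretely, I would first argue that
\begin{align*}
K\bigl(\psi_k(f,Y),s;X(I^{n-1}),L^\infty(I^{n-1})\bigr)\le K\bigl(f,s;\mathcal{R}_k(X,Y),\mathcal{R}_k(L^\infty,Y)\bigr),
\end{align*}
because any decomposition $f=g+h$ of the vector-valued object induces the decomposition $\psi_k(f,Y)\le\psi_k(g,Y)+\psi_k(h,Y)$ with $\|\psi_k(g,Y)\|_{X}=\|g\|_{\mathcal{R}_k(X,Y)}$ and $\|\psi_k(h,Y)\|_{L^\infty}=\|h\|_{\mathcal{R}_k(L^\infty,Y)}$. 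Summing the $\mathcal R_k$ norms into the $\mathcal R$ norm and using that each $\mathcal R_k(\cdot,Y)\supset\mathcal R(\cdot,Y)$ gives the comparison with $K(f,s;\mathcal{R}(X,Y),\mathcal{R}(L^\infty,Y))$ up to a fixed constant depending only on $n$. The next step is the classical scalar formula: for the couple $(X,L^\infty)$ on a space of measure $|I|^{n-1}$, one has the well-known lower bound
\begin{align*}
\bigl\|\psi_k^*(f,Y)\chi_{(0,s)}\bigr\|_{\overline{X}(0,|I|^{n-1})}\lesssim K\bigl(\psi_k(f,Y),\varphi_X(s);X(I^{n-1}),L^\infty(I^{n-1})\bigr),
\end{align*}
which comes from choosing the optimal decomposition at the level of truncations of the rearrangement, using that the fundamental function $\varphi_X$ calibrates the $L^\infty$-cost against the $X$-cost of a level set of measure $s$. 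Choosing $s$ so that the scale parameter matches $\varphi_X(t)$ and summing over $k$ then yields the claimed inequality.

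The main obstacle I anticipate is the scalar lower bound for $K(\cdot,\varphi_X(t);X,L^\infty)$: one must verify that truncating $\psi_k^*(f,Y)$ at its value at height $t$ and splitting accordingly produces the correct matching between the parameter $\varphi_X(t)$ and the truncation length $t$. This is where the fundamental function enters essentially, and care is needed because $\overline X$ is defined on $(0,|I|^{n-1})$ while the $L^\infty$ part contributes a term weighted exactly by $\varphi_X(t)=\|\chi_{(0,t)}\|_{\overline X}$. I would handle it by taking the specific competitor $g_k=(\psi_k(f,Y)-\psi_k^*(f,Y)(t))_+$ and $h_k=\min(\psi_k(f,Y),\psi_k^*(f,Y)(t))$, for which $\|h_k\|_{L^\infty}\le \psi_k^*(f,Y)(t)$ and $\varphi_X(t)\|h_k\|_{L^\infty}$ is comparable to $\|\psi_k^*(f,Y)(t)\chi_{(0,t)}\|_{\overline X}$, while $\|g_k\|_X$ controls the tail; combining these gives the bound on $\|\psi_k^*(f,Y)\chi_{(0,t)}\|_{\overline X}$. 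Everything else is bookkeeping with the subadditivity of $\psi_k$ and the definition of the $\mathcal R$-norm.
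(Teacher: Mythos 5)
Your overall architecture is sound and, once unwound, coincides with the paper's own proof: the paper also starts from the subadditivity $\psi_{k}(f,Y)\le\psi_{k}(f_{0},Y)+\psi_{k}(f_{1},Y)$, deduces $\psi^{*}_{k}(f,Y)(s)\le \psi^{*}_{k}(f_{0},Y)(s)+\bigl\|f_{1}\bigr\|_{\mathcal{R}_{k}(L^{\infty},Y)}$, pays for the $L^{\infty}$-part with $\varphi_{X}(t)=\bigl\|\chi_{(0,t)}\bigr\|_{\overline{X}(0,|I|^{n-1})}$, and takes the infimum over decompositions before summing in $k$. Your reduction $K(\psi_{k}(f,Y),s;X,L^{\infty})\le K(f,s;\mathcal{R}(X,Y),\mathcal{R}(L^{\infty},Y))$ followed by the scalar equivalence of Theorem~\ref{theorem: K-functional of r.i. spaces} is a legitimate modular repackaging of that computation (note that the pointwise inequality $\psi_{k}(f,Y)\le\psi_{k}(g,Y)+\psi_{k}(h,Y)$ is not itself a decomposition; to feed it into the $K$-functional you must set $u=\min(\psi_{k}(f,Y),\psi_{k}(g,Y))$ and $v=(\psi_{k}(f,Y)-\psi_{k}(g,Y))_{+}\le\psi_{k}(h,Y)$ and invoke the lattice property (A2) — routine, but it should be said).

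The genuine gap is in your final paragraph, precisely at what you identify as the crux: the scalar lower bound $\bigl\|\phi^{*}\chi_{(0,t)}\bigr\|_{\overline{X}}\lesssim K(\phi,\varphi_{X}(t);X,L^{\infty})$ with $\phi=\psi_{k}(f,Y)$. Exhibiting the specific competitor $g_{k}=(\phi-\phi^{*}(t))_{+}$, $h_{k}=\min(\phi,\phi^{*}(t))$ bounds the infimum defining $K$ from \emph{above}: it yields $K(\phi,\varphi_{X}(t);X,L^{\infty})\le \bigl\|g_{k}\bigr\|_{X}+\varphi_{X}(t)\bigl\|h_{k}\bigr\|_{L^{\infty}}\lesssim \bigl\|\phi^{*}\chi_{(0,t)}\bigr\|_{\overline{X}}$, which is the opposite of what the lemma requires (this truncation construction is exactly what the paper uses for the \emph{upper} bound in Theorem~\ref{theorem: K-functional of R(X,Loo) and Loo}). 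A lower bound on $K$ must control \emph{every} decomposition: for arbitrary $\phi=u+v$ one uses $\phi^{*}(s)\le u^{*}(s)+v^{*}(0)\le u^{*}(s)+\bigl\|v\bigr\|_{L^{\infty}}$ for $0<s<t$, whence $\bigl\|\phi^{*}\chi_{(0,t)}\bigr\|_{\overline{X}}\le \bigl\|u\bigr\|_{X}+\varphi_{X}(t)\bigl\|v\bigr\|_{L^{\infty}}$, and then takes the infimum — this is the paper's two-line argument. The repair is immediate (run this estimate, or simply cite the equivalence in Theorem~\ref{theorem: K-functional of r.i. spaces}, recalled in the paper from Milman and Arazy), but as written your justification of the decisive step proves the wrong direction and, taken literally, the proof does not close.
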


\begin{proof}
We fix $0<t<|I|^{n-1}$  and  $k\in \left\{1,\ldots,n\right\}.$ If $f=f_{0}+f_{1},$ with $f_{0}\in\mathcal{R}(X,Y)$ and $f_{1}\in \mathcal{R}(L^{\infty},Y),$ then 
\begin{align*}
\psi_{k}(f,Y)(\widehat{x_{k}})\leq \psi_{k}(f_{0},Y)(\widehat{x_{k}})+\psi_{k}(f_{1},Y)(\widehat{x_{k}}),\ \ \widehat{x_{k}}\in I^{n-1}.
\end{align*}
So,  it holds that
\begin{align*}
\psi^{*}_{k}(f,Y)(t)\leq \psi^{*}_{k}(f_{0},Y)(t)+\psi^{*}_{k}(f_{1},Y)(0)=\psi^{*}_{k}(f_{0},Y)(t)+\bigl\|f_{1}\bigr\|_{\mathcal{R}_{k}(L^{\infty},Y)}.
\end{align*}
Therefore,  we have
\begin{align*}
\bigl\|\psi^{*}_{k}(f,Y)\chi_{(0,t)}\bigr\|_{\overline{X}(0,|I|^{n-1})}&\leq \bigl\|\psi^{*}_{k}(f_{0},Y)\chi_{(0,t)}\bigr\|_{\overline{X}(0,|I|^{n-1})}+\varphi_{X}(t)\bigl\|f_{1}\bigr\|_{\mathcal{R}_{k}(L^{\infty},Y)}\\
& \leq \bigl\|f_{0}\bigr\|_{\mathcal{R}(X,Y)}+\varphi_{X}(t)\bigl\|f_{1}\bigr\|_{\mathcal{R}(L^{\infty},Y)}.
\end{align*}
Hence, taking the infimum over all decompositions of $f$ of the form $f=f_{0}+f_{1},$ with $f_{0}\in\mathcal{R}(X,Y)$ and $f_{1}\in \mathcal{R}(L^{\infty},Y),$ we obtain
\begin{align*}
\bigl\|\psi^{*}_{k}(f,Y)\chi_{(0,t)}\bigr\|_{\overline{X}(0,|I|^{n-1})}&\leq K(f,\varphi_{X}(t), \mathcal{R}(X,Y),\mathcal{R}(L^{\infty},Y)),
\end{align*}
for any $k\in \bigl\{1,\ldots,n\bigr\}$ and $0<t<|I|^{n-1}.$ Consequently,
\begin{align*}
\sum^{n}_{k=1}\bigl\|\psi^{*}_{k}(f,Y)\chi_{(0,t)}\bigr\|_{\overline{X}(0,|I|^{n-1})}\leq n  K(f,\varphi_{X}(t), \mathcal{R}(X,Y),\mathcal{R}(L^{\infty},Y)),\ \ 0<t<|I|^{n-1},
\end{align*}
from which the result follows.
\end{proof} 
 
\begin{theorem}\label{theorem: K-functional of R(X,Loo) and Loo} Let $X(I^{n-1})$ be an r.i. space and let   $f\in  \mathcal{R}(X,L^{\infty})+L^{\infty}(I^{n}).$ Then, 
\begin{align*}
K(f,\varphi_{X}(t), \mathcal{R}(X,L^{\infty}),L^{\infty})\approx\sum^{n}_{k=1}\bigl\|\psi^{*}_{k}(f,L^{\infty})\chi_{(0,t)}\bigr\|_{\overline{X}(0,|I|^{n-1})},\ \ 0<t<|I|^{n-1},
\end{align*}
where $\varphi_{X}(t)$ is the fundamental function of $X(I^{n-1}).$ 
\end{theorem}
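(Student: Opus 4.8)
The plan is to prove the equivalence by establishing the two inequalities separately. The lower bound ``$\gtrsim$'' (i.e. the sum dominates the $K$-functional, up to constants) is already supplied by Lemma~\ref{lemma: lower bound for K-functional} applied with $Y=L^{\infty}$, since $\mathcal{R}(L^{\infty},L^{\infty})=L^{\infty}(I^{n})$. Indeed, $\psi_k(f,L^{\infty})(\widehat{x}_k)=\bigl\|f(\widehat{x}_k,\cdot)\bigr\|_{L^{\infty}(I)}$ and the essential supremum over $\widehat{x}_k$ of this quantity is precisely $\bigl\|f\bigr\|_{L^{\infty}(I^{n})}$, so the couple $(\mathcal{R}(X,L^{\infty}),\mathcal{R}(L^{\infty},L^{\infty}))$ coincides with $(\mathcal{R}(X,L^{\infty}),L^{\infty})$. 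Thus the whole content of the theorem lies in the reverse inequality: constructing, for each $0<t<|I|^{n-1}$, an explicit decomposition $f=f_0+f_1$ with $f_0\in\mathcal{R}(X,L^{\infty})$ and $f_1\in L^{\infty}(I^{n})$ whose cost $\bigl\|f_0\bigr\|_{\mathcal{R}(X,L^{\infty})}+\varphi_X(t)\bigl\|f_1\bigr\|_{L^{\infty}(I^{n})}$ is controlled by the right-hand sum.

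First I would fix $t$ and introduce a truncation at a level determined by $t$. The natural choice is a cutoff at height $\lambda = \max_k \psi^{*}_k(f,L^{\infty})(t)$, or more carefully, a level chosen so that the resulting ``bad'' part has small section-norms on a set of controlled measure. Writing $f_1 = (\mathrm{sgn}\,f)\min(|f|,\lambda)$ for the bounded part and $f_0 = f - f_1 = (\mathrm{sgn}\,f)(|f|-\lambda)_{+}$ for the unbounded part, one immediately gets $\bigl\|f_1\bigr\|_{L^{\infty}}\le\lambda$. The key point is then to estimate $\bigl\|f_0\bigr\|_{\mathcal{R}_k(X,L^{\infty})}=\bigl\|\psi_k(f_0,L^{\infty})\bigr\|_{X(I^{n-1})}$ for each $k$. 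Here the crucial observation is that truncation at level $\lambda$ interacts cleanly with the $L^{\infty}$-norm in the last variable: for each fixed $\widehat{x}_k$ one has $\psi_k(f_0,L^{\infty})(\widehat{x}_k)=\bigl(\psi_k(f,L^{\infty})(\widehat{x}_k)-\lambda\bigr)_{+}$, since taking the essential supremum in $x_k$ commutes with the truncation $(\cdot-\lambda)_{+}$. This reduces the whole problem to a one-dimensional estimate on the rearrangements $\psi^{*}_k(f,L^{\infty})$.

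With that reduction in hand, the remaining step is to pass from the $X(I^{n-1})$-norm of $(\psi_k(f,L^{\infty})-\lambda)_{+}$ to the truncated quantity $\bigl\|\psi^{*}_k(f,L^{\infty})\chi_{(0,t)}\bigr\|_{\overline{X}(0,|I|^{n-1})}$ appearing on the right. Using the Luxemburg Representation Theorem, $\bigl\|f_0\bigr\|_{\mathcal{R}_k(X,L^{\infty})}=\bigl\|(\psi^{*}_k(f,L^{\infty})-\lambda)_{+}\bigr\|_{\overline{X}(0,|I|^{n-1})}$, where I choose $\lambda=\psi^{*}_k(f,L^{\infty})(t)$ for the relevant index (or a common level that dominates all of them). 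With this choice the function $(\psi^{*}_k(f,L^{\infty})-\lambda)_{+}$ is supported in $(0,t)$ and is pointwise bounded by $\psi^{*}_k(f,L^{\infty})\chi_{(0,t)}$, so property (A2) of the r.i.\ norm on $\overline{X}$ gives the bound directly. Simultaneously, the cost of the bounded part is $\varphi_X(t)\,\lambda=\varphi_X(t)\,\psi^{*}_k(f,L^{\infty})(t)$, and since $\varphi_X(t)=\bigl\|\chi_{(0,t)}\bigr\|_{\overline{X}}$ and $\psi^{*}_k$ is nonincreasing, this too is controlled by $\bigl\|\psi^{*}_k(f,L^{\infty})\chi_{(0,t)}\bigr\|_{\overline{X}}$. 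Summing over $k$ and taking the infimum over decompositions yields the desired upper bound on the $K$-functional.

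The main obstacle will be handling the interaction between the different indices $k$ in a single decomposition: the decomposition must be one global pair $(f_0,f_1)$, yet each $\mathcal{R}_k$-norm involves a different ``missing coordinate,'' so a cutoff tuned to one index must still be shown to behave acceptably for the others. The cleanest route, which I would pursue, is to use a single scalar truncation level $\lambda$ (for instance $\lambda=\max_k\psi^{*}_k(f,L^{\infty})(t)$, finite for $t>0$) applied uniformly to $|f|$; the identity $\psi_k(f_0,L^{\infty})=(\psi_k(f,L^{\infty})-\lambda)_{+}$ then holds for \emph{all} $k$ at once because truncation of $|f|$ commutes with the supremum in each variable separately. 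This uniform choice costs at most a factor of $n$ in the final constant, which is harmless since the statement only claims equivalence up to constants, and it lets every one of the $n$ estimates above go through with the same $\lambda$.
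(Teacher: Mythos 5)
Your proof is correct and takes essentially the same route as the paper's: the lower bound via Lemma~\ref{lemma: lower bound for K-functional} with $Y=L^{\infty}$ (using $\mathcal{R}(L^{\infty},L^{\infty})=L^{\infty}(I^{n})$), and for the upper bound the same single global truncation $f_{0}=\mathrm{sgn}(f)\,(|f|-\lambda)_{+}$, with your level $\lambda=\max_{k}\psi^{*}_{k}(f,L^{\infty})(t)$ playing the role of the paper's $\alpha_{t}=\sum_{j}\psi^{*}_{j}(f,L^{\infty})(t)$, which is equivalent up to a factor of $n$. The only cosmetic difference is that you justify the key identity $\psi_{k}(f_{0},L^{\infty})=\bigl(\psi_{k}(f,L^{\infty})-\lambda\bigr)_{+}$ by observing that truncation commutes with the essential supremum in $x_{k}$, whereas the paper verifies the same identity by computing the sectional distribution functions $\lambda_{F_{\widehat{x_{k}}}}$ and invoking Lemma~\ref{lemma: essential projection property}; both justifications are valid.
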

\begin{proof}
In view of Lemma~\ref{lemma: lower bound for K-functional}, we only need to prove
\begin{align*}
K(f,\varphi_{X}(t), \mathcal{R}(X,L^{\infty}),L^{\infty})\lesssim \sum^{n}_{k=1}\bigl\|\psi^{*}_{k}(f,L^{\infty})\chi_{(0,t)}\bigr\|_{\overline{X}(0,|I|^{n-1})},\ \ 0<t<|I|^{n-1}.
\end{align*}
 For this, we fix any $0<t<|I|^{n-1}.$ Then, we define 
\begin{align*}
\alpha_{t}=\sum^{n}_{j=1}\psi^{*}_{j}(f,L^{\infty})(t),
\end{align*}

\begin{align*}
F(x)=\begin{cases}
\displaystyle{f(x)-\frac{\alpha_{t}f(x)}{|f(x)|}},&\textnormal{if $x\in A_{t}= \left\{x\in I^{n}: \left|f(x)\right|>\alpha_{t}\right\},$ }\\
0,& \textnormal{otherwise,}
\end{cases}
\end{align*}
and $G=f-F.$ Let $k\in \left\{1,\ldots,n\right\}$  be fixed. Then, for any $\widehat{x_{k}}\in I^{n-1},$
\begin{align*}
F_{\widehat{x_{k}}}(y)=\begin{cases}
\displaystyle{f_{\widehat{x_{k}}}(y)-\frac{\alpha_{t}f(\widehat{x_{k}})(y)}{|f_{\widehat{x_{k}}}(y)|}},&\textnormal{if $y\in A_{t}(\widehat{x_{k}}),$ }\\
0,& \textnormal{otherwise,}
\end{cases}
\end{align*}
where
\begin{align*}
A_{t}(\widehat{x_{k}})= \left\{y\in I: (\widehat{x_{k}},y)\in A_{t}\right\}=\left\{y\in I: |f_{\widehat{x_{k}}}(y)|>\alpha_{t}\right\}.
\end{align*}
Thus, for any $s\geq 0$ and $\widehat{x_{k}}\in I^{n-1},$
\begin{align*}
\lambda_{F_{\widehat{x_{k}}}}(s)&=\left|\left\{y\in I: |F_{\widehat{x_{k}}}(y)|>s\right\}\right|=\left|\left\{y\in A_{t}(\widehat{x_{k}}): \bigl||f_{\widehat{x_{k}}}(y)|-\alpha_{t}\bigr|>s\right\}\right|\\
&=\left|\left\{y\in I: |f_{\widehat{x_{k}}}(y)|-\alpha_{t}>s\right\}\right|=\lambda_{f_{\widehat{x_{k}}}}(s+\alpha_{t}).
\end{align*}
Now, let us suppose that $\widehat{x_{k}}\not\in \Pi^{*}_{k}A_{t}.$ Then, by definition, $\lambda_{f_{\widehat{x_{k}}}}(\alpha_{t})=0.$ As a consequence, we have that if $\widehat{x_{k}}\not\in \Pi^{*}_{k}A_{t},$ then $\lambda_{F_{\widehat{x_{k}}}}(s)=0,$ for any $s\geq 0.$ Therefore, for any $s\geq 0,$ it holds that
\begin{align*}
\lambda_{F_{\widehat{x_{k}}}}(s)&=\begin{cases}\lambda_{f_{\widehat{x_{k}}}}(s+\alpha_{t}),& \textnormal{if $\widehat{x_{k}}\in \Pi^{*}_{k}A_{t},$}\\
0,&\textnormal{otherwise.}\end{cases}
\end{align*}
So, Lemma~\ref{lemma: essential projection property} implies that, for any $s\geq 0,$
\begin{align*}
\lambda_{F_{\widehat{x_{k}}}}(s)&=\begin{cases}
\lambda_{f_{\widehat{x_{k}}}}(s+\alpha_{t}),&\textnormal{if $\widehat{x_{k}}\in \left\{\widehat{x_{k}}\in I^{n-1}: \psi_{k}(f,L^{\infty})(\widehat{x_{k}})>\alpha_{t}\right\},$}\\
0,&\textnormal{otherwise.}
\end{cases}
\end{align*}
Hence, for any $\widehat{x_{k}}\in \left\{\widehat{x_{k}}\in I^{n-1}: \psi_{k}(f,L^{\infty})(\widehat{x_{k}})>\alpha_{t}\right\},$ we get
\begin{align*}
\psi_{k}(F,L^{\infty})(\widehat{x_{k}})&=\inf\left\{y>0: \lambda_{F_{x_{k}}}(y)=0\right\}=\inf\left\{y>0: \lambda_{f_{x_{k}}}(y+\alpha_{t})=0\right\}\\
&=\psi_{k}(f,L^{\infty})(\widehat{x_{k}})-\alpha_{t}.
\end{align*}
Therefore,  we obtain
\begin{align*}
\bigl\|F\bigr\|_{\mathcal{R}_{k}(X,L^{\infty})}&=\bigl\|\bigl(\psi_{k}(f,L^{\infty})(\widehat{x_{k}})-\alpha_{t}\bigr)\chi_{\left\{\widehat{x_{k}}\in I^{n-1}: \psi_{k}(f,L^{\infty})(\widehat{x_{k}})>\alpha_{t}\right\}}(\widehat{x_{k}})\bigr\|_{X(I^{n-1})}\\
&=\bigl\|\bigl(\psi^{*}_{k}(f,L^{\infty})-\alpha_{t}\bigr)\chi_{\bigl(0,\lambda_{\psi_{k}(f,L^{\infty})}(\alpha_{t})\bigr)}\bigr\|_{\overline{X}(0,|I|^{n-1})}.
\end{align*}
But, by hypothesis,  $\alpha_{t}=\sum^{n}_{j=1}\psi^{*}_{j}(f,L^{\infty})(t)$ and hence  we get
\begin{align*}
\bigl\|F\bigr\|_{\mathcal{R}_{k}(X,L^{\infty})}&\leq\bigl\|\psi^{*}_{k}(f,L^{\infty})\chi_{(0,t)}\bigr\|_{\overline{X}(0,|I|^{n-1})},\ \ \textnormal{for any $k\in \left\{1,\ldots,n\right\}.$}
\end{align*}
Therefore, using the above inequality, we obtain
\begin{align*}
K(f,\varphi_{X}(t), \mathcal{R}(X,L^{\infty}),L^{\infty})&\leq \bigl\|F\bigr\|_{\mathcal{R}(X,L^{\infty})}+\varphi_{X}(t)\bigl\|G\bigr\|_{L^{\infty}(\mathbb R^{n})}\\
&=\sum^{n}_{k=1}\bigl\|F\bigr\|_{\mathcal{R}_{k}(L^{1},L^{\infty})}+ \varphi_{X}(t)\alpha_{t}\\
&\leq \sum^{n}_{k=1}\bigl\|\psi^{*}_{k}(f,L^{\infty})\chi_{(0,t)}\bigr\|_{\overline{X}(0,|I|^{n-1})}+ \varphi_{X}(t)\alpha_{t}.
\end{align*}
But, it holds that 
\begin{align*}
\varphi_{X}(t)\alpha_{t}\leq \sum^{n}_{k=1}\bigl\|\psi^{*}_{k}(f,L^{\infty})\chi_{(0,t)}\bigr\|_{\overline{X}(0,|I|^{n-1})},
\end{align*}
and hence, we have
\begin{align*}
K(f,\varphi_{X}(t), \mathcal{R}(X,L^{\infty}),L^{\infty})&\leq 2\sum^{n}_{k=1}\bigl\|\psi^{*}_{k}(f,L^{\infty})\chi_{(0,t)}\bigr\|_{\overline{X}(0,|I|^{n-1})},\ \ t>0.
\end{align*}
 Thus, the proof is complete.
\end{proof}
 
 As a consequence we obtain the following result. Recall that $(A,B)_{\theta,q}$ stands for the real interpolation space of the couple $(A,B)$  \cite[Definition~V.1.7]{Bennett}:
 
\begin{corollary}\label{corollary: mixtas e interpolacion} Let $X(I^{n-1})$ be an r.i. space. Let $0<\theta<1$ and $1\leq q\leq \infty.$  Then, 
$$(\mathcal{R}(X,L^{\infty}),L^{\infty})_{\theta,q}=\mathcal{R}((X,L^{\infty})_{\theta,q},L^{\infty}), $$ 
with equivalent norms.
\end{corollary}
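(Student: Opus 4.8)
The plan is to derive Corollary~\ref{corollary: mixtas e interpolacion} from the $K$-functional formula in Theorem~\ref{theorem: K-functional of R(X,Loo) and Loo} together with the general principle that the real interpolation space $(A,B)_{\theta,q}$ is determined by the size of $K(f,t;A,B)$ as a function of $t$. Concretely, I would argue that both sides of the claimed identity are characterized by the \emph{same} weighted $L^q$ condition on the rearrangements $\psi_k^*(f,L^\infty)$, and hence coincide with equivalent norms.

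First I would recall that, by definition, $f \in (\mathcal{R}(X,L^\infty),L^\infty)_{\theta,q}$ precisely when the quantity $\bigl(\int_0^\infty \bigl[s^{-\theta}K(f,s;\mathcal{R}(X,L^\infty),L^\infty)\bigr]^q\,\frac{ds}{s}\bigr)^{1/q}$ is finite (with the usual $\sup$ modification for $q=\infty$). The natural move is to change variables via $s=\varphi_X(t)$ so that Theorem~\ref{theorem: K-functional of R(X,Loo) and Loo} applies directly: it gives
\begin{align*}
K(f,\varphi_X(t);\mathcal{R}(X,L^\infty),L^\infty)\approx \sum_{k=1}^n \bigl\|\psi_k^*(f,L^\infty)\chi_{(0,t)}\bigr\|_{\overline X(0,|I|^{n-1})},\ \ 0<t<|I|^{n-1}.
\end{align*}
Thus the interpolation norm becomes (up to constants) an integral against $\varphi_X(t)^{-\theta q}$ of the above sum, with the appropriate measure coming from the substitution. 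For the right-hand space $\mathcal{R}((X,L^\infty)_{\theta,q},L^\infty)$, I would instead unwind the outer norm: a function lies there iff $\psi_k(f,L^\infty)\in (X,L^\infty)_{\theta,q}(I^{n-1})$ for each $k$, and this latter membership is again governed by the $K$-functional $K(\psi_k^*(f,L^\infty),\varphi_X(t);\overline X,L^\infty)$. The crux is then to observe that this one-dimensional $K$-functional admits exactly the analogous formula $K(g,\varphi_X(t);X,L^\infty)\approx \|g^*\chi_{(0,t)}\|_{\overline X}$, which is the classical description of the $K$-functional for the couple $(X,L^\infty)$ of an r.i. space with $L^\infty$ (this is the one-component prototype of Theorem~\ref{theorem: K-functional of R(X,Loo) and Loo} and can be cited from the interpolation literature, e.g. \cite{Bennett}).

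Having both sides expressed through the same family $\{\|\psi_k^*(f,L^\infty)\chi_{(0,t)}\|_{\overline X}\}_k$ weighted identically in $t$, the final step is a routine comparison: the sum over $k$ can be pulled outside the interpolation functional because the $L^q(\tfrac{ds}{s})$-norm of a finite sum of nonnegative functions is equivalent to the sum of the individual $L^q(\tfrac{ds}{s})$-norms (with constants depending only on $n$ and $q$). This shows that $\|f\|_{(\mathcal{R}(X,L^\infty),L^\infty)_{\theta,q}}\approx \sum_{k=1}^n \|\psi_k(f,L^\infty)\|_{(X,L^\infty)_{\theta,q}}=\|f\|_{\mathcal{R}((X,L^\infty)_{\theta,q},L^\infty)}$, which is the desired equivalence of norms.

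I expect the main obstacle to be the bookkeeping in the change of variables $s=\varphi_X(t)$: since $\varphi_X$ need not be smooth or strictly monotone, one must justify that integrating $[\,s^{-\theta}K(f,s)\,]^q\,\tfrac{ds}{s}$ over $s$ is equivalent to integrating the reparametrized expression over $t$, and control what happens near $t=0$ where $\varphi_X(0^+)$ may or may not vanish (the dichotomy in \eqref{eq: abs. cont}). A clean way around this is to use the standard fact that the real interpolation norm is unchanged, up to equivalence, if the continuous $K$-functional is replaced by its values on any sufficiently dense sequence of scales, together with the quasi-concavity and monotonicity of $t\mapsto \|\psi_k^*(f,L^\infty)\chi_{(0,t)}\|_{\overline X}$ and of $\varphi_X$; these guarantee that the discretized sums match on both sides regardless of the fine behavior of $\varphi_X$. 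Beyond this, the argument is a direct transcription of the two $K$-functional formulas into interpolation norms and should present no further difficulty.
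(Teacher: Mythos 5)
Your proposal is correct and follows essentially the same route as the paper: the paper likewise substitutes $s=\varphi_{X}(t)$ in the interpolation norm, applies Theorem~\ref{theorem: K-functional of R(X,Loo) and Loo} for the couple $(\mathcal{R}(X,L^{\infty}),L^{\infty})$ together with the classical one-dimensional formula of Theorem~\ref{theorem: K-functional of r.i. spaces} for $(X,L^{\infty})$, and compares the sum over $k$ with the individual terms in both directions, exactly as you outline. The only differences are cosmetic: you cite the one-dimensional $K$-functional formula from the interpolation literature and propose a discretization to justify the change of variables for non-smooth $\varphi_{X}$, whereas the paper states that formula as Theorem~\ref{theorem: K-functional of r.i. spaces} and performs the substitution directly, writing the integrals against $d\varphi_{X}(s)$.
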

To  prove it, we first need to recall a result concerning the $K$-functional of pairs of r.i. spaces. For further information see \cite{Milman-kfunctioanl, Arazy}.  
\begin{theorem}\label{theorem: K-functional of r.i. spaces} Let $X(I^{n})$ be an r.i. space. Then, 
\begin{align*}
K(f,\varphi_{X}(t), X ,L^{\infty})\approx \bigl\|f^{*}\chi_{(0,t)}\bigr\|_{\overline{X}(0,|I|^{n})},\ \ t>0.
\end{align*}
\end{theorem}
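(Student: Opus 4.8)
The plan is to establish the two inequalities separately, following exactly the one-variable prototype of the argument already carried out in Theorem~\ref{theorem: K-functional of R(X,Loo) and Loo}: the lower bound comes from the subadditivity of the decreasing rearrangement under decompositions, and the upper bound from an explicit truncation at a suitable level. Throughout I would pass freely between $\|g\|_{X(I^{n})}$ and $\|g^{*}\|_{\overline{X}(0,|I|^{n})}$ via the Luxemburg Representation Theorem, and I would use repeatedly that $\|\chi_{(0,t)}\|_{\overline{X}(0,|I|^{n})}=\varphi_{X}(t)$.

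For the lower bound, fix $t$ and let $f=f_{0}+f_{1}$ be any admissible decomposition with $f_{0}\in X(I^{n})$ and $f_{1}\in L^{\infty}(I^{n})$. The standard inequality for rearrangements of sums gives, for every $s>0$, the pointwise bound $f^{*}(s)\le f_{0}^{*}(s)+f_{1}^{*}(0)=f_{0}^{*}(s)+\|f_{1}\|_{L^{\infty}}$. Multiplying by $\chi_{(0,t)}$, passing to the $\overline{X}$-norm, and using property~(A2) together with $\|\chi_{(0,t)}\|_{\overline{X}}=\varphi_{X}(t)$, I would obtain
$$\bigl\|f^{*}\chi_{(0,t)}\bigr\|_{\overline{X}}\le \bigl\|f_{0}^{*}\chi_{(0,t)}\bigr\|_{\overline{X}}+\varphi_{X}(t)\|f_{1}\|_{L^{\infty}}\le \|f_{0}\|_{X}+\varphi_{X}(t)\|f_{1}\|_{L^{\infty}}.$$
Taking the infimum over all such decompositions yields $\bigl\|f^{*}\chi_{(0,t)}\bigr\|_{\overline{X}}\le K\bigl(f,\varphi_{X}(t);X,L^{\infty}\bigr)$.

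For the upper bound, I would choose the truncation level $\alpha=f^{*}(t)$ and set, in analogy with the construction in Theorem~\ref{theorem: K-functional of R(X,Loo) and Loo},
$$f_{0}(x)=\Bigl(f(x)-\frac{\alpha f(x)}{|f(x)|}\Bigr)\chi_{\{|f|>\alpha\}}(x),\qquad f_{1}=f-f_{0}.$$
Then $\|f_{1}\|_{L^{\infty}}\le\alpha=f^{*}(t)$, while the distributional computation $\lambda_{f_{0}}(s)=\lambda_{f}(s+\alpha)$ shows that $f_{0}^{*}(s)=\bigl(f^{*}(s)-\alpha\bigr)_{+}$, which is supported in $\bigl(0,\lambda_{f}(\alpha)\bigr)\subset(0,t)$ since $\lambda_{f}(f^{*}(t))\le t$. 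Hence $f_{0}^{*}\le f^{*}$ on $(0,t)$, and property~(A2) gives $\|f_{0}\|_{X}=\bigl\|(f^{*}-\alpha)_{+}\chi_{(0,t)}\bigr\|_{\overline{X}}\le\bigl\|f^{*}\chi_{(0,t)}\bigr\|_{\overline{X}}$. Combining the estimates,
$$K\bigl(f,\varphi_{X}(t);X,L^{\infty}\bigr)\le\|f_{0}\|_{X}+\varphi_{X}(t)\|f_{1}\|_{L^{\infty}}\le\bigl\|f^{*}\chi_{(0,t)}\bigr\|_{\overline{X}}+\varphi_{X}(t)f^{*}(t).$$
Since $\varphi_{X}(t)f^{*}(t)=\bigl\|f^{*}(t)\chi_{(0,t)}\bigr\|_{\overline{X}}\le\bigl\|f^{*}\chi_{(0,t)}\bigr\|_{\overline{X}}$ (because $f^{*}$ is nonincreasing), the right-hand side is at most $2\bigl\|f^{*}\chi_{(0,t)}\bigr\|_{\overline{X}}$, giving the two-sided equivalence with an absolute constant.

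I do not anticipate a serious obstacle, as the argument is classical and closely parallels the already-established Theorem~\ref{theorem: K-functional of R(X,Loo) and Loo}. The only points demanding a little care are the measure-theoretic identity $f_{0}^{*}=(f^{*}-\alpha)_{+}$ together with the verification that its support lies inside $(0,t)$ via $\lambda_{f}(f^{*}(t))\le t$, and the bookkeeping of constants so that the two bounds combine into a genuine $\approx$ with a constant independent of both $f$ and $t$.
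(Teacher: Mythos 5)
Your proof is correct, and there is in fact no internal proof in the paper to compare it against: the paper states Theorem~\ref{theorem: K-functional of r.i. spaces} as a known result, recalled from the literature \cite{Milman-kfunctioanl, Arazy}, and uses it as a black box in the proof of Corollary~\ref{corollary: mixtas e interpolacion}. What you have written is precisely the classical argument behind those references, and it is also the scalar prototype of the paper's own proof of Theorem~\ref{theorem: K-functional of R(X,Loo) and Loo}: your truncation $f_{0}=\bigl(f-\alpha f/|f|\bigr)\chi_{\{|f|>\alpha\}}$ at level $\alpha=f^{*}(t)$ is exactly the decomposition $F,G$ used there with $\alpha_{t}=\sum_{j}\psi^{*}_{j}(f,L^{\infty})(t)$, and your absorption of the term $\varphi_{X}(t)f^{*}(t)$ into $\bigl\|f^{*}\chi_{(0,t)}\bigr\|_{\overline{X}}$ by monotonicity of $f^{*}$ mirrors the paper's absorption of $\varphi_{X}(t)\alpha_{t}$. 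All the delicate points check out: $\lambda_{f_{0}}(s)=\lambda_{f}(s+\alpha)$ gives $f_{0}^{*}=(f^{*}-\alpha)_{+}$, whose support lies in $\bigl(0,\lambda_{f}(f^{*}(t))\bigr)\subset(0,t)$ by the standard inequality $\lambda_{f}(f^{*}(t))\leq t$; the lower bound correctly uses $(f_{0}+f_{1})^{*}(s)\leq f_{0}^{*}(s)+f_{1}^{*}(0)$ together with the lattice property (A2) and $\bigl\|\chi_{(0,t)}\bigr\|_{\overline{X}}=\varphi_{X}(t)$; and the constants combine to give $\approx$ with absolute constant $2$, uniformly in $f$ and $t$. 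The only cosmetic caveat is that $\varphi_{X}$ is defined for $0<t\leq|I|^{n}$, so the statement is really of interest for $t$ in that range (for larger $t$ the right-hand side stabilizes), but this does not affect anything in the paper's use of the result.
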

\begin{proof}[Proof of Corollary~\ref{corollary: mixtas e interpolacion}]For the sake of simplicity, we prove this result only when $1\leq q<\infty.$  Let $f\in (\mathcal{R}(X,L^{\infty}),L^{\infty})_{\theta,q}.$  Then, by a change of variables, we get
\begin{align*}
\bigl\|f\bigr\|^{q}_{(\mathcal{R}(X,L^{\infty}),L^{\infty})_{\theta,q}}&=\int^{\infty}_{0}t^{-\theta q-1}\bigl[K(f,t, \mathcal{R}(X,L^{\infty}),L^{\infty})\bigr]^{q}dt\\
&=\int^{\infty}_{0}\bigl(\varphi_{X}(s)\bigr)^{-\theta q-1}\bigl[K(f,\varphi_{X}(s), \mathcal{R}(X,L^{\infty}),L^{\infty})\bigr]^{q}d\varphi_{X}(s).
\end{align*}
Hence, using Theorem \ref{theorem: K-functional of R(X,Loo) and Loo}, we obtain
\begin{align*}
\bigl\|f\bigr\|^{q}_{(\mathcal{R}(L^{1},L^{\infty}),L^{\infty})_{\theta,q}}&\approx
\int^{\infty}_{0}\bigl(\varphi_{X}(s)\bigr)^{-\theta q-1}\Bigl[\sum^{n}_{k=1}\bigl\|\psi^{*}_{k}(f,L^{\infty})\chi_{(0,s)}\bigr\|_{\overline{X}(0,|I|^{n-1})}\Bigr]^{q}ds\\
&\geq \int^{\infty}_{0}\bigl(\varphi_{X}(s)\bigr)^{-\theta q-1}\bigl\|\psi^{*}_{k}(f,L^{\infty})\chi_{(0,s)}\bigr\|^{q}_{\overline{X}(0,|I|^{n-1})}ds,
\end{align*}
for any $k\in \left\{1,\ldots,n\right\}$. So, Theorem \ref{theorem: K-functional of r.i. spaces} implies that
\begin{align*}
\bigl\|f\bigr\|^{q}_{(\mathcal{R}(L^{1},L^{\infty}),L^{\infty})_{\theta,q}}&\gtrsim \int^{\infty}_{0}\bigl(\varphi_{X}(s)\bigr)^{-\theta q-1}\bigl[K(\psi_{k}(f,L^{\infty}),\varphi_{X}(s), X,L^{\infty})\bigr]^{q}d\varphi_{X}(s)\\
&=\bigl\|\psi_{k}(f,L^{\infty})\bigr\|^{q}_{(X,L^{\infty})_{\theta,q}}=\bigl\|f\bigr\|_{\mathcal{R}_{k}((X,L^{\infty})_{q,\theta},L^{\infty})}.
\end{align*}
 As a consequence, we get
\begin{align*}
\bigl\|f\bigr\|_{\mathcal{R}((X,L^{\infty})_{\theta,q},L^{\infty})}=\sum^{n}_{k=1}\bigl\|f\bigr\|_{\mathcal{R}((X,L^{\infty})_{\theta,q},L^{\infty})}\lesssim \bigl\|f\bigr\|_{(\mathcal{R}(X,L^{\infty}),L^{\infty})_{\theta,q}}.
\end{align*}
Thus,  we have seen that the embedding 
\begin{align*}
(\mathcal{R}(X,L^{\infty}),L^{\infty})_{\theta,q}\hookrightarrow \mathcal{R}((X,L^{\infty})_{\theta,q},L^{\infty})
\end{align*}
holds. Hence, to complete the proof, it only remains to see that 
\begin{align*}
 \mathcal{R}((X,L^{\infty})_{\theta,q},L^{\infty})\hookrightarrow (\mathcal{R}(X,L^{\infty}),L^{\infty})_{\theta,q},
\end{align*}
also holds. To do it, we fix any $f\in  \mathcal{R}((X,L^{\infty})_{\theta,q},L^{\infty}).$ Then,   using Theorem~\ref{theorem: K-functional of R(X,Loo) and Loo} and the subadditive property of $\bigl\|\cdot\bigr\|_{(\mathcal{R}(X,L^{\infty}),L^{\infty})_{\theta,q}},$ we get
\begin{align*}
\bigl\|f\bigr\|_{(\mathcal{R}(X,L^{\infty}),L^{\infty})_{\theta,q}}\leq \sum^{n}_{k=1} \Bigl(\int^{\infty}_{0}\bigl(\varphi_{X}(s)\bigr)^{-\theta q-1}\bigl\|\psi^{*}_{k}(f,L^{\infty})\chi_{(0,s)}\bigr\|^{q}_{\overline{X}(0,|I|^{n-1})}d\varphi_{X}(s)\Bigr)^{1/q}.
\end{align*}
So, using Theorem \ref{theorem: K-functional of r.i. spaces} , we obtain
\begin{align*}
\bigl\|f\bigr\|_{(\mathcal{R}(L^{1},L^{\infty}),L^{\infty})_{\theta,q}}&\lesssim\sum^{n}_{k=1}\bigl\|\psi_{k}(f,L^{\infty})\bigr\|_{(X,L^{\infty})_{\theta,q}}=\bigl\|f\bigr\|_{\mathcal{R}((X,L^{\infty})_{\theta,q},L^{\infty})}.
\end{align*}
That is, $\mathcal{R}((X,L^{\infty})_{\theta,q},L^{\infty})\hookrightarrow(\mathcal{R}(X,L^{\infty}),L^{\infty})_{\theta,q}.$ Thus, the proof is complete.
\end{proof}

\section{Embeddings between mixed norm spaces}\label{section: Embeddings between mixed norm spaces}

Our aim in this section  is to  characterize certain embeddings between mixed norm spaces. 
Before that, let us   emphasize that relations between mixed norm  spaces of Lorentz spaces were studied in \cite{Robert-Viktor}, where it was shown, for instance, 
 \begin{align}\label{eq: viktor-robert}
 \mathcal{R}(L^{1},L^{\infty})\hookrightarrow \mathcal{R}(L^{(n-1)^{\prime},1},L^{1}),\ \ n\geq 2.
 \end{align}
 
Let us start with some preliminary lemmas:

\begin{lemma}\label{lema: inclusion the Benedek-Panzone spaces} Let $k\in\bigl\{1,\ldots,n\bigr\}.$  Let $X_{1}(I^{n-1}),$ $X_{2}(I^{n-1}),$ $Y_{1}(I),$ and  $Y_{2}(I)$ be r.i. spaces. Then, 
\begin{align*}
\mathcal{R}_{k}(X_{1},Y_{1})\hookrightarrow \mathcal{R}_{k}(X_{2},Y_{2})\Leftrightarrow \begin{cases}X_{1}(I^{n-1})\hookrightarrow X_{2}(I^{n-1}),\\ Y_{1}(I)\hookrightarrow Y_{2}(I).\end{cases}
\end{align*}
\end{lemma}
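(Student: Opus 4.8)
The plan is to prove both implications of the equivalence, with the nontrivial direction being the forward one. The reverse direction ($\Leftarrow$) is essentially immediate: assume $X_{1}(I^{n-1})\hookrightarrow X_{2}(I^{n-1})$ and $Y_{1}(I)\hookrightarrow Y_{2}(I)$, meaning there are constants $C_{X}$ and $C_{Y}$ with $\|g\|_{X_{2}}\leq C_{X}\|g\|_{X_{1}}$ and $\|h\|_{Y_{2}}\leq C_{Y}\|h\|_{Y_{1}}$. For any $f\in\mathcal{R}_{k}(X_{1},Y_{1})$ and a.e.\ $\widehat{x_{k}}\in I^{n-1}$, applying the $Y$-embedding to the slice $f(\widehat{x_{k}},\cdot)$ gives $\psi_{k}(f,Y_{2})(\widehat{x_{k}})=\|f(\widehat{x_{k}},\cdot)\|_{Y_{2}(I)}\leq C_{Y}\|f(\widehat{x_{k}},\cdot)\|_{Y_{1}(I)}=C_{Y}\,\psi_{k}(f,Y_{1})(\widehat{x_{k}})$. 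Thus $\psi_{k}(f,Y_{2})\leq C_{Y}\,\psi_{k}(f,Y_{1})$ pointwise, and then by property (A2) (monotonicity of the norm) together with the $X$-embedding applied to the function $\psi_{k}(f,Y_{1})$ on $I^{n-1}$,
\begin{align*}
\|f\|_{\mathcal{R}_{k}(X_{2},Y_{2})}=\|\psi_{k}(f,Y_{2})\|_{X_{2}}\leq C_{Y}\|\psi_{k}(f,Y_{1})\|_{X_{2}}\leq C_{X}C_{Y}\|\psi_{k}(f,Y_{1})\|_{X_{1}}=C_{X}C_{Y}\|f\|_{\mathcal{R}_{k}(X_{1},Y_{1})},
\end{align*}
which is the desired embedding of Benedek-Panzone spaces.

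For the forward direction ($\Rightarrow$), the strategy is to recover each component embedding by testing $\mathcal{R}_{k}(X_{1},Y_{1})\hookrightarrow\mathcal{R}_{k}(X_{2},Y_{2})$ against tensor-product (separated-variable) functions, for which the mixed norm factors completely. To prove $Y_{1}(I)\hookrightarrow Y_{2}(I)$, I would fix an arbitrary $h\in Y_{1}(I)$ and test with $f(\widehat{x_{k}},x_{k})=\chi_{E}(\widehat{x_{k}})\,h(x_{k})$, where $E\subset I^{n-1}$ is a fixed set of positive finite measure. For this $f$ one computes $\psi_{k}(f,Y_{i})(\widehat{x_{k}})=\chi_{E}(\widehat{x_{k}})\,\|h\|_{Y_{i}(I)}$, so that $\|f\|_{\mathcal{R}_{k}(X_{i},Y_{i})}=\|h\|_{Y_{i}(I)}\,\varphi_{X_{i}}(|E|)$ by the definition of the fundamental function. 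The assumed embedding then yields $\|h\|_{Y_{2}}\,\varphi_{X_{2}}(|E|)\leq C\,\|h\|_{Y_{1}}\,\varphi_{X_{1}}(|E|)$, and since $\varphi_{X_{1}}(|E|)$ and $\varphi_{X_{2}}(|E|)$ are fixed finite positive constants (the fundamental function is finite by (A4) and positive for $|E|>0$), we obtain $\|h\|_{Y_{2}}\lesssim\|h\|_{Y_{1}}$, which is the $Y$-embedding.

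Symmetrically, to prove $X_{1}(I^{n-1})\hookrightarrow X_{2}(I^{n-1})$, I would fix $g\in X_{1}(I^{n-1})$ and test with $f(\widehat{x_{k}},x_{k})=g(\widehat{x_{k}})\,\chi_{J}(x_{k})$ for a fixed subinterval $J\subset I$ of positive length. Here $\psi_{k}(f,Y_{i})(\widehat{x_{k}})=|g(\widehat{x_{k}})|\,\|\chi_{J}\|_{Y_{i}(I)}=|g(\widehat{x_{k}})|\,\varphi_{Y_{i}}(|J|)$, so $\|f\|_{\mathcal{R}_{k}(X_{i},Y_{i})}=\varphi_{Y_{i}}(|J|)\,\|g\|_{X_{i}(I^{n-1})}$, and the assumed embedding gives $\|g\|_{X_{2}}\lesssim\|g\|_{X_{1}}$ after dividing by the fixed positive constants $\varphi_{Y_{1}}(|J|)$, $\varphi_{Y_{2}}(|J|)$. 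The one point requiring a little care---and the closest thing to an obstacle---is verifying that these separated functions genuinely have the factored mixed norm and that the constants $\varphi_{X_{i}}(|E|)$, $\varphi_{Y_{i}}(|J|)$ are strictly positive and finite so that the division is legitimate; this is guaranteed by the axioms (A1)--(A6), in particular by (A4) (finiteness of the norm of the characteristic function) and by the fact that the norm of a nonzero characteristic function cannot vanish (a consequence of (A1) and (A5)). With these checks in place, the equivalence follows.
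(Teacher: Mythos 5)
Your proof is correct and takes essentially the same approach as the paper: the paper's forward direction tests the assumed embedding against the functions $g_{1}(x)=f_{1}(\widehat{x_{k}})\chi_{I^{n}}(x)$ and $g_{2}(x)=f_{2}(x_{k})\chi_{I^{n}}(x)$, which are precisely your tensor-product test functions in the special case $E=I^{n-1}$ and $J=I$, and the converse is likewise read off from the definition. Your write-up only adds the explicit factorization of the mixed norm and the positivity/finiteness checks on the fundamental functions, which the paper leaves implicit.
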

\begin{proof}To prove the implication ``$\Rightarrow$", we just have to apply the hypothesis to the functions 
\begin{align*}
g_{1}(x)=f_{1}(\widehat{x_{k}})\chi_{I^{n}}(x) \ \ \textnormal{and}\ \  g_{2}(x)=f_{2}(x_{k})\chi_{I^{n}}(x),
\end{align*}
 with $f_{1}\in X_{1}(I^{n-1})$ and $f_{2}\in Y_{1}(I)$. The converse  follows   from   Definition~\ref{definition: Benedek-Panzone}.
\end{proof}

It is important to observe that there are examples, for instance \eqref{eq: viktor-robert}, showing that if in Lemma~\ref{lema: inclusion the Benedek-Panzone spaces} we replace the  Benedek-Panzone spaces by  mixed norm spaces, then the corresponding equivalence is not longer true. However, we always have this result:

\begin{lemma}\label{lema: inclusion norma mixta} Let $X_{1}(I^{n-1}),$ $X_{2}(I^{n-1}),$ $Y_{1}(I),$ and  $Y_{2}(I)$ be r.i. spaces. Then, 
$$
X_{1}(I^{n-1})\hookrightarrow X_{2}(I^{n-1})\text{ and } Y_{1}(I)\hookrightarrow Y_{2}(I)
\Rightarrow \mathcal{R}(X_{1},Y_{1})\hookrightarrow \mathcal{R}(X_{2},Y_{2}).
$$
\end{lemma}

\begin{proof}
It is immediate from   Definition~\ref{definition: mixed norm spaces} and Lemma~\ref{lema: inclusion the Benedek-Panzone spaces}.
\end{proof}

Lemma~\ref{lema: inclusion the Benedek-Panzone spaces} and Lemma~\ref{lema: inclusion norma mixta} show that it is natural to study when embeddings  between mixed norm spaces are true.  Motivated by this problem, we shall find necessary and sufficient conditions in the following cases:
\begin{align*}
\begin{array}{lll}
\mathcal{R}(L^{\infty},Y_{1})\hookrightarrow \mathcal{R}(X_{2},Y_{2}),&  \mathcal{R}(X_{1},L^{\infty})\hookrightarrow \mathcal{R}(X_{2},L^{\infty}),&  \mathcal{R}(X_{1},Y)\hookrightarrow \mathcal{R}(X_{2},Y).
\end{array}
\end{align*}

\begin{theorem}\label{teo: mixta inclusion R(X1,Y1)->R(X2,Y2)}For any r.i. spaces  $X_{1}(I^{n-1}),$ $X_{2}(I^{n-1}),$  $Y_{1}(I)$ and $Y_{2}(I),$ if the following embedding 
\begin{align*}
\mathcal{R}(X_{1},Y_{1})\hookrightarrow \mathcal{R}(X_{2},Y_{2})
\end{align*}
holds, then $Y_{1}(I)\hookrightarrow Y_{2}(I).$ 
\end{theorem}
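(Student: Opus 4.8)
The plan is to reduce the $n$-dimensional embedding to the one-dimensional inequality $\|g\|_{Y_2(I)}\lesssim\|g\|_{Y_1(I)}$ by feeding into the hypothesis a single well-chosen test function built from an arbitrary $g\in Y_1(I)$. The naive choice $f(x)=g(x_n)$, constant in the first $n-1$ variables, does isolate the fibre norm in the $n$-th Benedek--Panzone component, since $\psi_n(f,Y)\equiv\|g\|_{Y(I)}$ gives $\|f\|_{\mathcal{R}_n(X,Y)}=\|g\|_{Y}\varphi_X(|I|^{n-1})$; however, for $k\neq n$ the section $x_k\mapsto f$ is the constant $g(x_n)$, so that $\psi_k(f,Y)=|g(x_n)|\varphi_Y(|I|)$ and $\|f\|_{\mathcal{R}_k(X,Y)}$ now measures $g$ in the $X$-norm rather than in the $Y$-norm. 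These transverse terms introduce $\|g\|_{X_1}$, which cannot be controlled by $\|g\|_{Y_1}$ (and may even be infinite), and removing this contamination is precisely the obstruction one must overcome.

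The device I would use to kill the transverse terms is a \emph{diagonal shear}. Writing $L=|I|$ and identifying $I$ with $[0,L]$, set $\sigma(x)=(x_1+\cdots+x_n)\bmod L$ and define the test function $f(x)=g(\sigma(x))$. For each fixed $k$ and fixed $\widehat{x_k}$, the section $x_k\mapsto\sigma(x)=(x_k+c)\bmod L$, with $c=\sum_{j\neq k}x_j$, is a cyclic shift of $[0,L]$, hence a measure-preserving bijection. Consequently $x_k\mapsto f$ is equimeasurable with $g$, so that by property (A6) one obtains $\psi_k(f,Y)(\widehat{x_k})=\|g\|_{Y(I)}$ for every $\widehat{x_k}\in I^{n-1}$ and every $k$. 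Thus all $n$ components coincide and $\|f\|_{\mathcal{R}(X,Y)}=n\,\|g\|_{Y}\,\varphi_X(|I|^{n-1})$ for any r.i. spaces $X,Y$; in particular $f\in\mathcal{R}(X_1,Y_1)$ whenever $g\in Y_1(I)$, which legitimizes the use of the hypothesis.

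With this computation the conclusion is immediate: applying $\mathcal{R}(X_1,Y_1)\hookrightarrow\mathcal{R}(X_2,Y_2)$ to $f$ yields
$$n\,\varphi_{X_2}(|I|^{n-1})\,\|g\|_{Y_2}=\|f\|_{\mathcal{R}(X_2,Y_2)}\le C\,\|f\|_{\mathcal{R}(X_1,Y_1)}=C\,n\,\varphi_{X_1}(|I|^{n-1})\,\|g\|_{Y_1},$$
so that $\|g\|_{Y_2}\le C\,\varphi_{X_1}(|I|^{n-1})\varphi_{X_2}(|I|^{n-1})^{-1}\|g\|_{Y_1}$. Since both fundamental functions are finite and strictly positive at $|I|^{n-1}$ by (A4), the ratio is a fixed constant independent of $g$, and $Y_1(I)\hookrightarrow Y_2(I)$ follows. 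The only point requiring genuine thought — and the conceptual heart of the argument — is the choice of the shear $\sigma$: it is exactly what forces every one-dimensional coordinate section of $f$ to be an equimeasurable copy of $g$, so that the spaces $X_1,X_2$ enter only through the harmless constant $\varphi_X(|I|^{n-1})$ and never through a true norm of $g$. This also explains, on the level of the proof, why no conclusion about the $X$-component can be drawn: the symmetrization inherent in the sum over $k$ washes out $X$ entirely.
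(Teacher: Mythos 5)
Your proof is correct, and it takes a genuinely different route from the paper's. The paper first reduces, via Lemma~\ref{lema: inclusion norma mixta}, to the extreme case $\mathcal{R}(L^{\infty},Y_{1})\hookrightarrow\mathcal{R}(L^{1},Y_{2})$, and then tests it on the compactly supported diagonal wave $f(x)=g^{*}\bigl(2\bigl|\sum_{i=1}^{n}x_{i}\bigr|\bigr)\chi_{(-r,r)^{n}}(x)$. Because of the cut-off $\chi_{(-r,r)^{n}}$, the sections of this $f$ are only \emph{dominated} by $g^{*}$ in general, and are exactly equimeasurable with $g$ only for $\widehat{x_{k}}$ in the small cube $(0,r/n)^{n-1}$; this yields $\|g\|_{Y_{2}}\lesssim\|g\|_{Y_{1}}$ only for $g$ with $\lambda_{g}(0)\leq 2r/n$, and the paper must then remove the support restriction by splitting a general $g$ into the part $g_{1}$ above the level $g^{*}(2r/n)$ plus a bounded part $g_{2}$ handled by H\"older's inequality. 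Your mod-$L$ shear $f=g\circ\sigma$, with $\sigma(x)=(x_{1}+\cdots+x_{n})\bmod L$, makes \emph{every} coordinate section of $f$ an exact measure-preserving rearrangement of $g$, so all $n$ Benedek--Panzone norms are computed exactly as $\|g\|_{Y}\varphi_{X}(|I|^{n-1})$, with no support restriction, no truncation step, and no need for the reduction lemma (your construction works for arbitrary $X_{1},X_{2}$ directly); this is shorter and conceptually sharper, and your closing remark correctly isolates why $X$ enters only through its fundamental function. Two small points worth tightening, neither of which affects correctness: measurability of $g\circ\sigma$ for a merely Lebesgue measurable $g$ needs the standard remark that $\sigma$ pulls null sets back to null sets (so one may replace $g$ by a Borel representative) --- the paper's use of the monotone, hence Borel, function $g^{*}$ sidesteps this silently --- and the strict positivity of $\varphi_{X_{2}}(|I|^{n-1})$ follows from the norm axiom (A1), while (A4) gives only its finiteness. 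The trade-off is that the paper's localized construction, with test functions supported in small balls or cubes, is the template reused throughout (in Theorems~\ref{teo: mixta inclusion R(X1,Loo)->R(X2,Loo)}, \ref{theorem: R(X1,Y)->R(X2,Y)}, \ref{theorem: mixta inclusion R(X,Loo)->Z} and \ref{teo: rango R(X,Loo)->Z}), where compact support inside $B_{n}(0,r)$ is essential; your shear exploits the specific feature of this theorem that the $X$-component only ever sees the constant function $\|g\|_{Y}\chi_{I^{n-1}}$, so it does not replace those later arguments, but for the present statement it is a cleaner proof.
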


\begin{proof}By Lemma~\ref{lema: inclusion norma mixta}, we may assume, without loss of generality, that the following embedding
\begin{align*}
\mathcal{R}(L^{\infty},Y_{1})\hookrightarrow \mathcal{R}(L^{1},Y_{2})
\end{align*}
holds. Also, we shall suppose that  $I=(-a,b),$ with $a,b\in\mathbb R_{+}.$ Let $r\in \mathbb R$ such that  $0<r<\min(a,b).$ Given any function $g\in {Y}_{1}(I),$ with $\lambda_{g}(0)\leq 2r/n,$ we   define 
 \begin{align*}
 f(x)=g^{*}\Bigl(2\Bigl|\sum^{n}_{i=1}x_{i}\Bigr|\Bigr)\chi_{(-r,r)^{n}}(x).
 \end{align*}
For any $k\in \bigr\{1,\ldots,n\bigr\}$, we denote 
 \begin{align*}
 \beta_{k}=\sum^{n}_{i=1,i\neq k}x_{i},\ \ \ \textnormal{whenever $\widehat{x_{k}}\in (-r,r)^{n-1}$.}
 \end{align*}
  Now, if $s\geq 0,$ we have
 \begin{align*}
 \lambda_{f_{\widehat{x_{k}}}}(s)&=\left|\left\{x\in (-r,r): g^{*}(2|x+\beta_{k}|)>s\right\}\right|\\
 &=\left|\left\{x\in (-r,r)\cap (-r/n-\beta_{k}, r/n-\beta_{k}) : g^{*}(2|x+\beta_{k}|)>s\right\}\right|\\
 &\leq \left|\left\{x\in (-r/n-\beta_{k}, r/n-\beta_{k}): g^{*}(2|x+\beta_{k}|)>s\right\}\right|\\
 &= \left|\left\{x\in (-r/n, r/n): g^{*}(2|x|)>s\right\}\right|=\lambda_{g}(s).
 \end{align*} 
 Thus,
 \begin{align*}
 \left\{s\geq 0: \lambda_{g}(s)\leq t\right\}\subseteq \bigl\{s\geq 0: \lambda_{f_{\widehat{x_{k}}}}(s)\leq t\bigr\},\ \ \textnormal{for any $t\geq 0,$}
 \end{align*}
 and so $f^{*}_{\widehat{x_{k}}}\leq g^{*}.$ Hence, we get
 \begin{align*}
 \psi_{k}(f,Y_{1})(\widehat{x_{k}})&\leq \bigl\|g^{*}\bigr\|_{\overline{Y}_{1}(0,|I|)},\ \ \widehat{x_{k}}\in (-r,r)^{n-1}.
 \end{align*}
 Therefore,  
 \begin{align*}
 \bigl\|f\bigr\|_{\mathcal{R}_{k}(L^{\infty},Y_{1})}\leq \bigl\|g^{*}\bigr\|_{\overline{Y}_{1}(0,|I|)},\ \ k\in \left\{1,\ldots,n\right\}.
 \end{align*}
 Hence, our assumption on $g$ ensures that $f\in\mathcal{R}(L^{\infty},Y_{1})$ and
 \begin{align}\label{eq: 1mixta inclusion R(X1,Y1)->R(X2,Y2)}
 \bigl\|f\bigr\|_{\mathcal{R}(L^{\infty},Y_{1})}\leq n\bigl\|g^{*}\bigr\|_{\overline{Y}_{1}(0,|I|)}.
 \end{align}
Thus, using $\mathcal{R}(L^{\infty},Y_{1})\hookrightarrow\mathcal{R}(L^{1},Y_{2})$ and \eqref{eq: 1mixta inclusion R(X1,Y1)->R(X2,Y2)}, we get
  \begin{align}\label{eq: 2mixta inclusion R(X1,Y1)->R(X2,Y2)}
 \bigl\|f\bigr\|_{\mathcal{R}(L^{1},Y_{2})}\lesssim \bigl\|g^{*}\bigr\|_{\overline{Y}_{1}(0,|I|)}.
 \end{align}
 Now, let us compute $\bigl\|f\bigr\|_{\mathcal{R}(L^{1},Y_{2})}.$ In order to do it, we fix any  $k\in \left\{1,\ldots,n\right\}$ and $\widehat{x_{k}}\in (0,r/n)^{n-1},$ and set
 \begin{align*}
 \gamma_{k}=\sum^{n}_{i=1,i\neq k}x_{i}.
 \end{align*} 
 As before, if $s\geq 0,$ we have
 \begin{align*}
 \lambda_{f_{\widehat{x_{k}}}}(s)&=\left|\left\{x\in (-r,r)\cap (-r/n-\gamma_{k}, r/n-\gamma_{k}) : g^{*}(2|x+\gamma_{k}|)>s\right\}\right|.
 \end{align*}
 But, $0<\gamma_{k}<r/n^{\prime},$ so we obtain
 \begin{align*}
\lambda_{f_{\widehat{x_{k}}}}(s)=\left|\left\{x\in (-r/n-\gamma_{k},r/n-\gamma_{k}): g^{*}(2|x+\gamma_{k}|)>s\right\}\right|=\lambda_{g}(s),
 \end{align*} 
 for any $s\geq 0.$ As a consequence, if $\widehat{x_{k}}\in (0,r/n)^{n-1},$ then $f^{*}_{\widehat{x_{k}}}=g^{*}.$ Thus, 
 \begin{align*}
\psi_{k}(f,Y_{2})(\widehat{x_{k}})&=\bigl\|f(\widehat{x_{k}},\cdot)\bigr\|_{Y_{2}(I)}\chi_{(-r,r)^{n-1}}(\widehat{x_{k}})
\geq \bigl\|f(\widehat{x_{k}},\cdot)\bigr\|_{Y_{2}(I)}\chi_{ (0,r/n)^{n-1}}(\widehat{x_{k}})\\
&=\bigl\|g^{*}\bigr\|_{\overline{Y}_{2}(0,|I|)}\chi_{ (0,r/n)^{n-1}}(\widehat{x_{k}}),
 \end{align*}
 and so 
 \begin{align*}
 \bigl\|f\bigr\|_{\mathcal{R}(L^{1},Y_{2})}\gtrsim \bigl\|g^{*}\bigr\|_{\overline{Y}_{2}(0,|I|)}.
 \end{align*}
 Therefore,  inequality \eqref{eq: 2mixta inclusion R(X1,Y1)->R(X2,Y2)} gives us that
 \begin{align}\label{eq: 3mixta inclusion R(X1,Y1)->R(X2,Y2)}
 \bigl\|g^{*}\bigr\|_{\overline{Y}_{2}(0,|I|)}\lesssim \bigl\|g^{*}\bigr\|_{\overline{Y}_{1}(0,|I|)},
 \end{align}
 for any $g\in Y(I),$ with $\lambda_{g}(0)\leq 2r/n.$ Now, let us consider any $g\in Y_{1}(I).$ We define
\begin{align*}
g_{1}(x)=\max\bigl(|g(x)|-g^{*}(2r/n),0\bigr)\,\textnormal{sgn}\,g(x),
\end{align*}
and 
\begin{align*}
g_{2}(x)=\min\bigl(|g(x)|,g^{*}(2r/n)\bigr)\,\textnormal{sgn}\,g(x).
\end{align*}
Since $\lambda_{g_{1}}(0)\leq 2r/n,$ the inequality \eqref{eq: 3mixta inclusion R(X1,Y1)->R(X2,Y2)}, with $g$ replaced by $g_{1},$ implies that 
\begin{align}\label{eq: 4mixta inclusion R(X1,Y1)->R(X2,Y2)}
\left\|g_{1}\right\|_{Y_{2}(I)}\lesssim \left\|g_{1}\right\|_{Y_{1}(I)}.
\end{align}
Thus,  combining $g_{1}\leq g$ a.e. with \eqref{eq: 4mixta inclusion R(X1,Y1)->R(X2,Y2)}, we get
\begin{align}\label{eq: 5mixta inclusion R(X1,Y1)->R(X2,Y2)}
\left\|g_{1}\right\|_{Y_{2}(I)}\lesssim  \left\|g\right\|_{Y_{1}(I)}.
\end{align}
 On the other hand, by H\"older's inequality, we obtain
\begin{align}\label{eq: 6mixta inclusion R(X1,Y1)->R(X2,Y2)}
\left\|g_{2}\right\|_{Y_{2}(I)}&\leq \varphi_{Y_{2}}(|I|)  g^{**}(2r/n)\lesssim \left\|g\right\|_{Y_{1}(I)}.
\end{align}
Finally, using \eqref{eq: 5mixta inclusion R(X1,Y1)->R(X2,Y2)} and \eqref{eq: 6mixta inclusion R(X1,Y1)->R(X2,Y2)},  we get
\begin{align*} 
\left\|g\right\|_{Y_{2}(I)}=\left\|g_{1}+g_{2}\right\|_{Y_{2}(I)} \lesssim \left\|g\right\|_{Y_{1}(I)},\ \ \ f\in Y_{1}(I), 
\end{align*}
and the proof is complete. 
\end{proof}

As a consequence we have the following corollary: 
\begin{corollary} Let  $X_{2}(I^{n-1}),$ $Y_{1}(I)$ and $Y_{2}(I)$ be r.i. spaces. Then, 
\begin{align*}
\mathcal{R}(L^{\infty},Y_{1})\hookrightarrow\mathcal{R}(X_{2},Y_{2})\Leftrightarrow Y_{1}(I)\hookrightarrow Y_{2}(I).
\end{align*}
\end{corollary}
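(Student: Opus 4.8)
The plan is to deduce both implications directly from results already in place, so that the corollary becomes a short bookkeeping exercise. For the forward implication ``$\Rightarrow$'', I would simply invoke Theorem~\ref{teo: mixta inclusion R(X1,Y1)->R(X2,Y2)} with the choice $X_{1}=L^{\infty}(I^{n-1})$. That theorem asserts that whenever $\mathcal{R}(X_{1},Y_{1})\hookrightarrow\mathcal{R}(X_{2},Y_{2})$ holds, one necessarily has $Y_{1}(I)\hookrightarrow Y_{2}(I)$; the hypothesis $\mathcal{R}(L^{\infty},Y_{1})\hookrightarrow\mathcal{R}(X_{2},Y_{2})$ of the corollary is precisely the instance $X_{1}=L^{\infty}$ of that conclusion, so this half requires no additional argument.

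For the reverse implication ``$\Leftarrow$'', the key observation is that $L^{\infty}(I^{n-1})$ is the smallest r.i. space on the finite measure space $I^{n-1}$. Indeed, for any r.i. space $X_{2}(I^{n-1})$ and any $f\in L^{\infty}(I^{n-1})$, the monotonicity property (A2) together with $|f|\le\|f\|_{L^{\infty}}\chi_{I^{n-1}}$ a.e.\ and the finiteness (A4) of $\varphi_{X_{2}}(|I|^{n-1})=\|\chi_{I^{n-1}}\|_{X_{2}}$ give $\|f\|_{X_{2}(I^{n-1})}\le\varphi_{X_{2}}(|I|^{n-1})\,\|f\|_{L^{\infty}(I^{n-1})}$, that is, $L^{\infty}(I^{n-1})\hookrightarrow X_{2}(I^{n-1})$ for \emph{every} r.i.\ space $X_{2}$. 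Combining this universal embedding of the first components with the standing hypothesis $Y_{1}(I)\hookrightarrow Y_{2}(I)$ of the second components, Lemma~\ref{lema: inclusion norma mixta} immediately yields the desired embedding $\mathcal{R}(L^{\infty},Y_{1})\hookrightarrow\mathcal{R}(X_{2},Y_{2})$.

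Since each direction collapses to a single application of a statement proved earlier, there is no genuine obstacle in this corollary; its entire depth is carried by Theorem~\ref{teo: mixta inclusion R(X1,Y1)->R(X2,Y2)}. The only point I would emphasize for the reader is the (perhaps surprising) feature that the reverse implication holds for an \emph{arbitrary} $X_{2}$, with $X_{2}$ playing no role in the criterion $Y_{1}\hookrightarrow Y_{2}$. This simply reflects the fact that the first component $L^{\infty}$ on the domain side is already as small as an r.i.\ space can be, so replacing it by the larger space $X_{2}$ can never obstruct the embedding.
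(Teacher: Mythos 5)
Your proposal is correct and takes essentially the same approach as the paper, which likewise deduces the forward implication from Theorem~\ref{teo: mixta inclusion R(X1,Y1)->R(X2,Y2)} (with $X_{1}=L^{\infty}$) and the reverse from Lemma~\ref{lema: inclusion norma mixta}. The only difference is cosmetic: you explicitly verify the standard embedding $L^{\infty}(I^{n-1})\hookrightarrow X_{2}(I^{n-1})$, which the paper's one-line proof leaves implicit.
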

\begin{proof}It follows from Lemma~\ref{lema: inclusion norma mixta} and Theorem~\ref{teo: mixta inclusion R(X1,Y1)->R(X2,Y2)}.
\end{proof}

Another consequence of Theorem~\ref{teo: mixta inclusion R(X1,Y1)->R(X2,Y2)} is the following result:
\begin{corollary}  Let $1<p_{1}\,,p_{3}<\infty,$ $1\leq q_{1}\,,q_{3}\leq \infty$ and either $p_{2}=q_{2}=1,$ $p_{2}=q_{2}=\infty$ or $1<p_{2}<\infty$ and $1\leq q_{2}\leq \infty.$ Then, 
\begin{align*} 
\mathcal{R}(L^{\infty},L^{p_{1},q_{1}})\hookrightarrow \mathcal{R}(L^{p_{2},q_{2}},L^{p_{3},q_{3}})\Leftrightarrow \begin{cases} p_{3}<p_{1}\,,1\leq q_{1}\,,q_{3}\leq \infty,\\
p_{1}=p_{3}\,, 1\leq q_{1}\leq q_{3}\leq \infty.\end{cases}\end{align*}
\end{corollary}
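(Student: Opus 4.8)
The plan is to strip the problem down to a one-dimensional Lorentz embedding by means of the preceding corollary, and then to invoke the classical theory of Lorentz spaces on a set of finite measure.

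First I would observe that the three alternatives imposed on the pair $(p_2,q_2)$ are precisely the ones for which $L^{p_2,q_2}(I^{n-1})$ is an r.i.\ space: the cases $p_2=q_2=1$ and $p_2=q_2=\infty$ give $L^1$ and $L^\infty$, while $1<p_2<\infty$, $1\le q_2\le\infty$ gives the genuine (normable) Lorentz spaces. With this in hand, $X_2:=L^{p_2,q_2}(I^{n-1})$, $Y_1:=L^{p_1,q_1}(I)$ and $Y_2:=L^{p_3,q_3}(I)$ are all r.i.\ spaces, so the preceding corollary (which is the announced consequence of Theorem~\ref{teo: mixta inclusion R(X1,Y1)->R(X2,Y2)} together with Lemma~\ref{lema: inclusion norma mixta}) applies and yields
\begin{align*}
\mathcal{R}(L^{\infty},L^{p_1,q_1})\hookrightarrow \mathcal{R}(L^{p_2,q_2},L^{p_3,q_3})\Leftrightarrow L^{p_1,q_1}(I)\hookrightarrow L^{p_3,q_3}(I).
\end{align*}
Notice that this step eliminates the dependence on $(p_2,q_2)$ altogether, which is exactly why these parameters do not appear on the right-hand side of the stated equivalence.

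It therefore remains to characterize the one-dimensional embedding $L^{p_1,q_1}(I)\hookrightarrow L^{p_3,q_3}(I)$ on the finite interval $I$, and this is the classical inclusion theorem for Lorentz spaces. For the sufficiency, when $p_1=p_3=:p$ and $q_1\le q_3$ I would use the standard nesting $L^{p,q_1}\hookrightarrow L^{p,q_3}$, valid on any measure space. When $p_3<p_1$ I would argue through the chain
\begin{align*}
L^{p_1,q_1}(I)\hookrightarrow L^{p_1,\infty}(I)\hookrightarrow L^{p_3,1}(I)\hookrightarrow L^{p_3,q_3}(I),
\end{align*}
where the two outer embeddings are the elementary monotonicity in the second index, and the middle one follows from the bound $f^{*}(t)\le \bigl\|f\bigr\|_{L^{p_1,\infty}(I)}\,t^{-1/p_1}$ together with the convergence of $\int_0^{|I|}t^{1/p_3-1/p_1-1}\,dt$, which holds precisely because $p_3<p_1$ (here the finiteness of $|I|$ is essential, since there is no singularity at the upper endpoint).

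Finally, for the necessity I would check that the embedding fails in the remaining cases, namely $p_3>p_1$, and $p_1=p_3$ with $q_1>q_3$, by testing against suitable power-type rearrangements such as $f^{*}(t)\approx t^{-1/p_1}$, adjusted by a logarithmic factor when it is needed to separate the second indices. Since the whole argument reduces, via the preceding corollary, to facts that are entirely classical, there is no genuine analytic obstacle here; the only points demanding care are the verification that the stated constraints on $(p_2,q_2)$ are exactly those making $X_2$ an r.i.\ space, and the construction of the sharp counterexamples realizing the converse direction.
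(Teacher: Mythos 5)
Your proposal is correct and follows essentially the same route as the paper: the paper's proof is exactly the reduction via Theorem~4.4 (equivalently, the preceding corollary, i.e.\ $\mathcal{R}(L^{\infty},Y_{1})\hookrightarrow\mathcal{R}(X_{2},Y_{2})\Leftrightarrow Y_{1}(I)\hookrightarrow Y_{2}(I)$) combined with the classical characterization of Lorentz-space embeddings on a finite measure space. Your additional details --- the chain $L^{p_{1},q_{1}}\hookrightarrow L^{p_{1},\infty}\hookrightarrow L^{p_{3},1}\hookrightarrow L^{p_{3},q_{3}}$ for $p_{3}<p_{1}$, the power and logarithmic test functions for necessity, and the remark that the constraints on $(p_{2},q_{2})$ are precisely those making $L^{p_{2},q_{2}}$ an r.i.\ space --- are all sound and simply flesh out what the paper cites from \cite{Bennett}.
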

\begin{proof}It follows from  Theorem~\ref{teo: mixta inclusion R(X1,Y1)->R(X2,Y2)} and the classical embeddings for Lorentz spaces (see \cite{Bennett}). 
\end{proof}

Let us now study  embeddings between mixed norm spaces of the form $\mathcal{R}(X,L^{\infty}).$

\begin{theorem}\label{teo: mixta inclusion R(X1,Loo)->R(X2,Loo)} Let  $X_{1}(I^{n-1})$ and $X_{2}(I^{n-1})$ be  r.i. spaces. Then,
\begin{align*}
\mathcal{R}(X_{1},L^{\infty})\hookrightarrow \mathcal{R}(X_{2},L^{\infty})\Leftrightarrow X_{1}(I^{n-1})\hookrightarrow X_{2}(I^{n-1}).
\end{align*}
\end{theorem}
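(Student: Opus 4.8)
The statement splits into a trivial implication and a substantial one. The implication $X_{1}(I^{n-1})\hookrightarrow X_{2}(I^{n-1})\Rightarrow \mathcal{R}(X_{1},L^{\infty})\hookrightarrow\mathcal{R}(X_{2},L^{\infty})$ is immediate from Lemma~\ref{lema: inclusion norma mixta} (take $Y_{1}=Y_{2}=L^{\infty}$), so the plan is to concentrate on the converse. Assuming $\mathcal{R}(X_{1},L^{\infty})\hookrightarrow\mathcal{R}(X_{2},L^{\infty})$, I want to deduce $\bigl\|h\bigr\|_{X_{2}(I^{n-1})}\lesssim\bigl\|h\bigr\|_{X_{1}(I^{n-1})}$ for every $h\in X_{1}(I^{n-1})$. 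Since r.i. norms depend only on the decreasing rearrangement (via the Luxemburg representation), it suffices to establish $\bigl\|\sigma\bigr\|_{\overline{X_{2}}(0,|I|^{n-1})}\lesssim\bigl\|\sigma\bigr\|_{\overline{X_{1}}(0,|I|^{n-1})}$ for every nonnegative nonincreasing $\sigma$ on $(0,|I|^{n-1})$.

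The key idea is to feed the embedding a single \emph{radially symmetric} test function that both mixed norms see in exactly the same way. As in the proof of Theorem~\ref{teo: mixta inclusion R(X1,Y1)->R(X2,Y2)} I assume $0$ lies in the interior of $I$ (translating $I$ if necessary, which changes none of the norms involved). Given a nonnegative, nonincreasing, right-continuous $\rho$, I set $F(x)=\rho(|x|)$ for $x\in I^{n}$, where $|x|$ is the Euclidean norm. Because $\rho$ is nonincreasing and $|x|^{2}=x_{k}^{2}+|\widehat{x_{k}}|^{2}$ is minimized over $x_{k}\in I$ at $x_{k}=0$, one computes $\psi_{k}(F,L^{\infty})(\widehat{x_{k}})=\rho(|\widehat{x_{k}}|)$ for every $k$. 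Thus all $n$ slice profiles are the \emph{same} radial function $\widehat{x_{k}}\mapsto\rho(|\widehat{x_{k}}|)$ on $I^{n-1}$; denote their common decreasing rearrangement by $\sigma_{\rho}$. By permutation symmetry of $F$ we then get $\bigl\|F\bigr\|_{\mathcal{R}(X_{i},L^{\infty})}=\sum_{k=1}^{n}\bigl\|\rho(|\cdot|)\bigr\|_{X_{i}(I^{n-1})}=n\bigl\|\sigma_{\rho}\bigr\|_{\overline{X_{i}}(0,|I|^{n-1})}$ for $i=1,2$, so applying the assumed embedding to $F$ yields $\bigl\|\sigma_{\rho}\bigr\|_{\overline{X_{2}}}\lesssim\bigl\|\sigma_{\rho}\bigr\|_{\overline{X_{1}}}$.

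It remains to check that, as $\rho$ runs over all admissible profiles, $\sigma_{\rho}$ runs over all nonincreasing functions on $(0,|I|^{n-1})$. For this I introduce the volume function $V(r)=\bigl|\{\widehat{x}\in I^{n-1}:|\widehat{x}|\leq r\}\bigr|$, which increases continuously from $0$ to $|I|^{n-1}$; a direct level-set computation gives $\sigma_{\rho}(s)=\rho\bigl(V^{-1}(s)\bigr)$. Hence, given a target nonincreasing $\sigma$, choosing $\rho=\sigma\circ V$ (which is nonincreasing, since $V$ increases and $\sigma$ decreases, and right-continuous) produces $\sigma_{\rho}=\sigma$. Combining this with the previous paragraph gives $\bigl\|\sigma\bigr\|_{\overline{X_{2}}}\lesssim\bigl\|\sigma\bigr\|_{\overline{X_{1}}}$ for every nonincreasing $\sigma$, which is precisely $X_{1}(I^{n-1})\hookrightarrow X_{2}(I^{n-1})$.

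The part demanding the most care is the pair of identities $\psi_{k}(F,L^{\infty})(\widehat{x_{k}})=\rho(|\widehat{x_{k}}|)$ and $\sigma_{\rho}(s)=\rho(V^{-1}(s))$: both hinge on the monotonicity of $\rho$ and on the essential supremum over $x_{k}$ being the right-hand limiting value as $x_{k}\to0$, which is why I take $\rho$ right-continuous and place $0$ in the interior of $I$ (so that $x_{k}=0$ is attained and the sole point $x_{k}=0$, of measure zero, causes no discrepancy). A secondary bookkeeping point is the reduction to radial $\rho$ and to nonincreasing $\sigma$, justified by the rearrangement invariance of all the norms involved. Beyond these verifications no delicate estimate is needed: the whole force of the argument is that a radial test function is seen identically from every coordinate direction, which transfers the mixed-norm embedding verbatim into an r.i. embedding on $I^{n-1}$.
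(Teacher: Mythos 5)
Your proof is correct, and its core idea is the paper's: test the embedding on a radial function, which every coordinate direction sees identically, so that all $n$ slice profiles $\psi_{k}(\cdot,L^{\infty})$ coincide with a single radial function on $I^{n-1}$ and the mixed norms collapse to $n$ times an r.i.\ norm of its rearrangement. Where you genuinely differ is in how full generality is reached. The paper tests with $g(x)=f^{*}(\omega_{n-1}|x|^{n-1})\chi_{B_{n}(0,r)}(x)$, supported in a ball inscribed in $I^{n}$; this forces the restriction $\lambda_{f}(0)\leq\omega_{n-1}r^{n-1}$ and then a second step --- the truncation splitting $g_{1},g_{2}$ at height $f^{*}(2r/n)$ from the end of the proof of Theorem~\ref{teo: mixta inclusion R(X1,Y1)->R(X2,Y2)}, with H\"older's inequality controlling the bounded piece --- to remove it. You instead parametrize by an arbitrary right-continuous nonincreasing profile $\rho$ on the whole cube and use the volume function $V(r)=\bigl|\{\widehat{x}\in I^{n-1}:|\widehat{x}|\leq r\}\bigr|$ to show that $\sigma_{\rho}=\rho\circ V^{-1}$ exhausts all decreasing functions on $(0,|I|^{n-1})$, so both the small-support restriction and the truncation step disappear. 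The delicate points you flag do hold: right-continuity of $\rho$ together with $0\in\operatorname{int}I$ gives $\psi_{k}(F,L^{\infty})(\widehat{x_{k}})=\rho(|\widehat{x_{k}}|)$ (the essential supremum is the limit as $x_{k}\to0$); $V$ is continuous and strictly increasing up to the circumradius of the cube since spheres are null and every subcritical shell meets the open cube in positive measure; and $\widehat{x}\mapsto V(|\widehat{x}|)$ pushes Lebesgue measure on $I^{n-1}$ forward to Lebesgue measure on $(0,|I|^{n-1})$, so $\sigma_{\sigma\circ V}=\sigma$ a.e., which is all the norms can detect. The net effect is that your argument is slightly more self-contained for this theorem, whereas the paper's inscribed-ball construction is the template it recycles (with varying exponents) in Theorems~\ref{theorem: R(X1,Y)->R(X2,Y)}, \ref{theorem: mixta inclusion R(X,Loo)->Z} and \ref{teo: rango R(X,Loo)->Z}, where the profile cannot be prescribed freely and the truncation argument is still needed.
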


\begin{proof} In view of Lemma~\ref{lema: inclusion norma mixta} , we only need to prove that if the embedding
 \begin{align*}
 \mathcal{R}(X_{1},L^{\infty})\hookrightarrow \mathcal{R}(X_{2},L^{\infty})
 \end{align*}
 holds, then $X_{1}(I^{n-1})\hookrightarrow X_{2}(I^{n}).$  As before, we assume that $I=(-a,b),$ with $a,b\in\mathbb R_{+},$ and $0<r<\min(a,b).$ Given any   $f\in X_{1}(I^{n-1}),$ with $\lambda_{f}(0)\leq \omega_{n-1}r^{n-1},$  we define 
 \begin{align*}
g(x)=\begin{cases}
f^{*}(\omega_{n-1}|x|^{n-1}),& \textnormal{if $x\in B_{n}(0,r),$}\\
0,&\textnormal{otherwise.}
\end{cases} 
\end{align*}
We fix any $k\in \left\{1,\ldots,n\right\}.$ Then, it holds that
\begin{align*}
\psi_{k}(g,L^{\infty})(\widehat{x_{k}})=\left\|g(\widehat{x_{k}},\cdot)\right\|_{L^{\infty}(I)}=f^{*}(\omega_{n-1}|\widehat{x_{k}}|^{n-1}),\ \ \textnormal{if $\widehat{x_{k}}\in B_{n-1}(0,r),$}
\end{align*}
and $\psi_{k}(g,L^{\infty})(\widehat{x_{k}})=0$ otherwise. So, for any $k\in \left\{1,\ldots,n\right\},$ we have
\begin{align*}
\left\|g\right\|_{\mathcal{R}_{k}(X_{1},L^{\infty})}=\left\|\psi_{k}(g,L^{\infty})\right\|_{X_{1}(I^{n-1})}= \left\|f\right\|_{X_{1}(I^{n-1})}.
\end{align*}
 Hence, since we are assuming that $f\in X_{1}(I^{n-1}),$ we obtain $g\in\mathcal{R}(X_{1},L^{\infty})$ and
 \begin{align*}
 \left\|g\right\|_{\mathcal{R}(X_{1},L^{\infty})}=n \left\|f\right\|_{X_{1}(I^{n-1})}.
 \end{align*}
   So, using $ \mathcal{R}(X_{1},L^{\infty})\hookrightarrow \mathcal{R}(X_{2},L^{\infty})$ and the previous inequality, we get 
\begin{align*}
\left\|g\right\|_{\mathcal{R}(X_{2},L^{\infty})}\lesssim \left\|f\right\|_{X_{1}(I^{n-1})}.
\end{align*}
 But, as before, 
\begin{align*}
\left\|g\right\|_{\mathcal{R}(X_{2},L^{\infty})}=n\left\|f\right\|_{X_{2}(I^{n-1})},
\end{align*}
hence, we have
\begin{align*}
\left\|f\right\|_{X_{2}(I^{n-1})}\lesssim \left\|f\right\|_{X_{1}(I^{n-1})}.
\end{align*}
This proves that if $f\in X_{1}(I^{n-1}),$ with $\lambda_{f}(0)\leq \omega_{n-1}r^{n-1},$ then  $f\in X_{2}(I^{n-1}).$ The rest of the proof is essentially the same as in Theorem~\ref{teo: mixta inclusion R(X1,Y1)->R(X2,Y2)}.
\end{proof}

For a general r.i. space $Y(I),$ we have a similar result assuming some conditions on $X_{1}(I^{n-1}).$

\begin{theorem}\label{theorem: R(X1,Y)->R(X2,Y)} Let  $X_{1}(I^{n-1})$ be an r.i. space , with  $\underline{\alpha}_{X_{1}}>0,$ and let $X_{2}(I^{n-1})$ and $Y(I)$ be  r.i. spaces.  Then,
\begin{align*}
\mathcal{R}(X_{1},Y)\hookrightarrow\mathcal{R}(X_{2},Y)\Leftrightarrow  X_{1}(I^{n-1})\hookrightarrow  X_{2}(I^{n-1}).
\end{align*}
\end{theorem}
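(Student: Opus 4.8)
The plan is to prove the nontrivial implication ``$\Rightarrow$'', since the converse is immediate from Lemma~\ref{lema: inclusion norma mixta}. Assume the embedding $\mathcal{R}(X_{1},Y)\hookrightarrow\mathcal{R}(X_{2},Y)$ holds; I must deduce $X_{1}(I^{n-1})\hookrightarrow X_{2}(I^{n-1})$. The natural strategy is to reduce to testing the embedding on functions built from a single fixed $f\in X_{1}(I^{n-1})$, exactly as in Theorems~\ref{teo: mixta inclusion R(X1,Y1)->R(X2,Y2)} and \ref{teo: mixta inclusion R(X1,Loo)->R(X2,Loo)}. Concretely, I would again take $I=(-a,b)$ with $0<r<\min(a,b)$, fix $f\in X_{1}(I^{n-1})$ with $\lambda_{f}(0)$ small (say $\lambda_f(0)\le\omega_{n-1}r^{n-1}$), and construct a test function $g$ on $I^{n}$ whose behavior is governed by $f^{*}(\omega_{n-1}|\widehat{x_{k}}|^{n-1})$ in the first $n-1$ variables and which is (essentially) constant in the last variable over an interval whose length I can control.

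The key new feature, compared with the $Y=L^{\infty}$ case, is that the inner $Y(I)$-norm of a function constant on an interval of length $\ell$ equals $c\cdot\varphi_{Y}(\ell)$, so the $\mathcal{R}_k(\cdot,Y)$ norms of the test function factor as an $X_i$-norm of $f$ multiplied by a fundamental-function factor $\varphi_Y$. The plan is therefore: first, choose $g(x)=f^{*}(\omega_{n-1}|\widehat{x_k}|^{n-1})\chi_{E}(x)$ on a suitable region $E\subset I^n$ so that, for each $k$, $\psi_k(g,Y)(\widehat{x_k})\approx f^{*}(\omega_{n-1}|\widehat{x_k}|^{n-1})\,\varphi_Y(\ell(\widehat{x_k}))$ where $\ell$ is the length of the $x_k$-section of $E$. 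Second, compute $\|g\|_{\mathcal{R}(X_1,Y)}\approx\|f\|_{X_1(I^{n-1})}$ (using boundedness of the relevant norms and that $\varphi_Y$ is bounded on a finite interval), and similarly bound $\|g\|_{\mathcal{R}(X_2,Y)}$ from below by $\|f\|_{X_2(I^{n-1})}$. Applying the embedding hypothesis then yields $\|f\|_{X_2(I^{n-1})}\lesssim\|f\|_{X_1(I^{n-1})}$ for all such $f$. Finally, remove the restriction $\lambda_f(0)\le\omega_{n-1}r^{n-1}$ by the same truncation argument used at the end of Theorem~\ref{teo: mixta inclusion R(X1,Y1)->R(X2,Y2)}: split $f=f_1+f_2$ into its high and low parts, apply the restricted estimate to $f_1$, and control $f_2$ via H\"older's inequality and the fundamental function of $X_2$.

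The main obstacle — and the precise point where the hypothesis $\underline{\alpha}_{X_1}>0$ must enter — is the second step, where I need the lower estimate $\|g\|_{\mathcal{R}(X_2,Y)}\gtrsim\|f\|_{X_2(I^{n-1})}$ together with the \emph{matching} upper estimate $\|g\|_{\mathcal{R}(X_1,Y)}\lesssim\|f\|_{X_1(I^{n-1})}$. Unlike the $L^{\infty}$ case, the profile $f^{*}(\omega_{n-1}|\widehat{x_k}|^{n-1})$ composed with the radial substitution does not interact cleanly with a general $X_1$-norm: rearranging $g$ across the product structure produces dilation operators $E_t$ acting on $\overline{X}_1$, and controlling their norms uniformly is exactly what $h_{X_1}$, hence the lower Boyd index, governs. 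Concretely, the condition $\underline{\alpha}_{X_1}>0$ guarantees that $\varphi_{X_1}(t)/t^{\varepsilon}$-type growth is admissible, so that the contributions from the overlap of the radial region with the various coordinate directions can be summed and absorbed into a constant multiple of $\|f\|_{X_1(I^{n-1})}$; without a strictly positive lower Boyd index these dilation norms may blow up and the upper bound fails. Thus I expect the technical heart of the argument to be verifying, using the definition of $\underline{\alpha}_{X_1}$ and the boundedness of $E_t$ on $\overline{X}_1(0,|I|^{n-1})$, that the construction of $g$ yields comparable $X_1$-norms, after which the conclusion follows by the truncation argument as in the previous theorems.
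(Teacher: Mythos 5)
Your overall scheme (reduce to the necessity implication via Lemma~\ref{lema: inclusion norma mixta}, test the embedding on a single radial-type function built from $f\in X_{1}(I^{n-1})$ with small support, then remove the support restriction by the truncation argument of Theorem~\ref{teo: mixta inclusion R(X1,Y1)->R(X2,Y2)}) is indeed the paper's scheme, and those outer layers of your plan are fine. But the central step has a genuine gap: a test function of the form $g=f^{*}(\omega_{n-1}|\widehat{x_{n}}|^{n-1})\chi_{E}$ cannot satisfy your claimed factorization $\psi_{k}(g,Y)(\widehat{x_{k}})\approx f^{*}(\omega_{n-1}|\widehat{x_{k}}|^{n-1})\,\varphi_{Y}(\ell(\widehat{x_{k}}))$ for \emph{every} $k$. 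For $k\neq n$ the $x_{k}$-sections run across the whole decreasing profile, so their inner norms are accumulated quantities of the type $\int f^{*}\,d\varphi_{Y}$, not pointwise values of $f^{*}$ times a fundamental function; only for $Y=L^{\infty}$ does a section norm collapse to the value of $f^{*}$ at the projected radius, which is exactly why the argument of Theorem~\ref{teo: mixta inclusion R(X1,Loo)->R(X2,Loo)} does not transfer. Concretely, for $n=2$ take $g(x_{1},x_{2})=f^{*}(2|x_{1}|)\chi_{(-r,r)^{2}}(x)$: then $\psi_{2}(g,Y)$ is the constant $\bigl\|f^{*}(2\,\cdot)\bigr\|_{\overline{Y}(0,|I|)}$, so your required upper bound $\bigl\|g\bigr\|_{\mathcal{R}(X_{1},Y)}\lesssim\bigl\|f\bigr\|_{X_{1}(I)}$ would force $\bigl\|f^{*}\bigr\|_{\overline{Y}}\lesssim\bigl\|f\bigr\|_{X_{1}}$, which is false in general --- e.g.\ $X_{1}=L^{1}$, $Y=L^{2}$, where nevertheless $\underline{\alpha}_{X_{1}}=1>0$ and the theorem holds. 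Shrinking the sections of $E$ to tame the inner norms destroys the matching lower bound in $X_{2}$, so the two estimates you need cannot be made to hold simultaneously with this construction.

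The missing idea in the paper is a $1/\varphi_{Y}$-damped Hardy-type construction: one takes $g$ radial with profile
\begin{align*}
g(x)=\int^{\omega_{n-1}r^{n-1}}_{\omega_{n-1}|x|^{n-1}}\frac{f^{*}(t)}{t\,\varphi_{Y}\bigl(2\,(t/\omega_{n-1})^{1/(n-1)}\bigr)}\,dt,\qquad x\in B_{n}(0,r),
\end{align*}
where the weight is calibrated so that, estimating the inner norm through $\Lambda_{\varphi_{Y}}\hookrightarrow Y$ and applying Fubini and the monotonicity of $\varphi_{Y}$, every section obeys $\psi_{k}(g,Y)(\widehat{x_{k}})\lesssim\int^{\lambda_{f}(0)}_{\omega_{n-1}|\widehat{x_{k}}|^{n-1}}f^{*}(s)\,\frac{ds}{s}$. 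This also corrects your diagnosis of where $\underline{\alpha}_{X_{1}}>0$ enters: it is used for exactly one purpose, the boundedness on $\overline{X}_{1}(0,|I|^{n-1})$ of the conjugate Hardy operator $f^{*}\mapsto\int^{|I|^{n-1}}_{t}f^{*}(s)\,\frac{ds}{s}$ (\cite[Theorem~V.5.15]{Bennett}), which yields the upper bound $\bigl\|g\bigr\|_{\mathcal{R}(X_{1},Y)}\lesssim\bigl\|f\bigr\|_{X_{1}(I^{n-1})}$ --- not to control dilation operators, which are bounded on every r.i.\ space regardless of the Boyd indices. The matching lower bound then needs no index hypothesis at all: by H\"older's inequality, $\psi_{k}(g,Y)(\widehat{x_{k}})\geq\varphi_{Y^{\prime}}(2|\widehat{x_{k}}|)^{-1}\int_{B_{1}(0,|\widehat{x_{k}}|)}g(\widehat{x_{k}},x_{k})\,dx_{k}\gtrsim f^{*}(2^{n-1}\omega_{n-1}|\widehat{x_{k}}|^{n-1})$, and dilation boundedness on $\overline{X}_{2}$ finishes the argument. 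Without the weighted integral your plan stalls precisely at the step you yourself identified as the technical heart.
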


\begin{proof} As before, according to Lemma~\ref{lema: inclusion norma mixta} , it suffices to prove the necessary part of this result. Also, by Theorem~\ref{teo: mixta inclusion R(X1,Loo)->R(X2,Loo)}, we assume that $Y(I)\neq L^{\infty}(I). $ Let us suppose that the embedding 
 \begin{align*}
 \mathcal{R}(X_{1},Y)\hookrightarrow \mathcal{R}(X_{2},Y)
 \end{align*}
 holds and suppose that $I=(-a,b),$ with $a,b\in\mathbb R_{+},$  and $0<r<\min(a,b).$ Given any function  $f\in X_{1}(I^{n-1}),$ with $\lambda_{f}(0)\leq \omega_{n-1}r^{n-1},$  we define 
\begin{align*}
g(x)=\begin{cases}
\displaystyle{\int^{\omega_{n-1}r^{n-1}}_{\omega_{n-1}|x|^{n-1}}\frac{f^{*}(t)}{t\varphi_{Y}(2\,(t/\omega_{n-1})^{1/(n-1)})}\,dt},& \textnormal{if $x\in B_{n}(0,r),$}\vspace{0.3cm}\\ 
0,&\textnormal{otherwise.}
\end{cases}
\end{align*} 
We fix  $k\in \left\{1,\ldots,n\right\}$ and $\widehat{x_{k}}\in B_{n-1}(0,r).$  Using now \eqref{eq: abs. cont}, we obtain 
\begin{align*}
\psi_{k}(g,Y)(\widehat{x_{k}})&\leq \psi_{k}(g,\Lambda_{\varphi_{Y}})(\widehat{x_{k}})=\psi_{k}(g,L^{\infty})(\widehat{x_{k}})\varphi_{Y}(0^{+})+\int^{|I|}_{0}g^{*}_{\widehat{x_{k}}}(t)\varphi^{\prime}_{Y}(t)dt\\
&=\int^{2((\lambda_{f}(0)/\omega_{n-1})^{1/(n-1)}-|\widehat{x_{k}}|)}_{0}\varphi^{\prime}_{Y}(t)dt\\
&\qquad\times \biggl(\int^{\omega_{n-1}r^{n-1}}_{\omega_{n-1}(t/2+|\widehat{x_{k}}|)^{n-1}}\frac{f^{*}(s)}{s\varphi_{Y}(2\,(s/\omega_{n-1})^{1/(n-1)})}\,ds\biggr).
\end{align*}
Then, Fubini's theorem gives
\begin{align*}
\psi_{k}(g,Y)(\widehat{x_{k}})&\lesssim \int^{\lambda_{f}(0)}_{\omega_{n-1}|\widehat{x_{k}}|^{n-1}}\frac{f^{*}(s)}{s\varphi_{Y}(2\,(s/\omega_{n-1})^{1/(n-1)})}\biggl(\int^{2(s/\omega_{n-1})^{1/(n-1)}-2|\widehat{x_{k}}|}_{0}\varphi^{\prime}_{Y}(t)dt\biggr)ds\\
&=\int^{\lambda_{f}(0)}_{\omega_{n-1}|\widehat{x_{k}}|^{n-1}}\frac{f^{*}(s)\varphi_{Y}\bigl(2(s/\omega_{n-1})^{1/(n-1)}-2|\widehat{x_{k}}|\bigr)}{s\varphi_{Y}(2\,(s/\omega_{n-1})^{1/(n-1)})}\,ds.\end{align*}
Hence, using that $\varphi_{Y}$ is an increasing function, we deduce that
\begin{align*}
\psi_{k}(g,Y)(\widehat{x_{k}})&\lesssim \int^{\lambda_{f}(0)}_{\omega_{n-1}|\widehat{x_{k}}|^{n-1}}\,f^{*}(s)\frac{ds}{s}.
\end{align*}
Since $\underline{\alpha}_{X_{1}}>0,$ \cite[Theorem~V.5.15]{Bennett} ensures that the integral operator 
\begin{align*}
\int^{|I|^{n-1}}_{t}f^{*}(s)\frac{ds}{s},
\end{align*}
is bounded on $\overline{X}_{1}(I^{n-1})$ and, as a consequence, we obtain
\begin{align*}
\left\|g\right\|_{\mathcal{R}_{k}(X_{1},Y)}=\left\|\psi_{k}(g,Y)\right\|_{X_{1}(I^{n-1})}\lesssim \biggl\| \int^{\lambda_{f}(0)}_{t}\,f^{*}(s)\frac{ds}{s}\biggr\|_{\overline{X}_{1}(0,|I|^{n-1})}\lesssim \left\|f\right\|_{X_{1}(I^{n-1})},
\end{align*}
for any $k\in \left\{1,\ldots,n\right\}.$ Hence, our assumption on $f$ gives that $g\in\mathcal{R}(X_{1},Y)$ and
 \begin{align}\label{eq: 1mixta inclusion R(X1,Y)->R(X2,Y)}
 \bigl\|g\bigr\|_{\mathcal{R}(X_{1},Y)}\lesssim \bigl\|f\bigr\|_{X_{1}(I^{n-1})}.
 \end{align}
 So, using $\mathcal{R}(X_{1},Y)\hookrightarrow\mathcal{R}(X_{2},Y)$ and \eqref{eq: 1mixta inclusion R(X1,Y)->R(X2,Y)}, we get
  \begin{align}\label{eq: 2mixta inclusion R(X1,Y)->R(X2,Y)}
 \bigl\|g\bigr\|_{\mathcal{R}(X_{2},Y)}\lesssim \bigl\|f\bigr\|_{X_{1}(I^{n-1})}.
 \end{align}
  We next find a lower estimate for $ \bigl\|g\bigr\|_{\mathcal{R}(X_{2},Y)}$. In fact, we fix $k\in \left\{1,\ldots,n\right\}$ and $\widehat{x_{k}}\in B_{n-1}(0,r/2).$ Then, by H\"older's inequality, we get
\begin{align}\label{eq: 3mixta inclusion R(X1,Y)->R(X2,Y)}
\frac{1}{\varphi_{Y^{\prime}}(2|\widehat{x_{k}}|)}\int_{B_{1}(0,|\widehat{x_{k}}|)}g(\widehat{x_{k}},x_{k})dx_{k}\leq \psi_{k}(g,Y)(\widehat{x_{k}}).
\end{align}
On the other hand, by   a change of variables, it holds that 
\begin{align*}
\int_{B_{1}(0,|\widehat{x_{k}}|)}g(\widehat{x_{k}},x_{k})dx_{k}\approx&\int^{2^{n-1}\omega_{n-1}|\widehat{x_{k}}|^{n-1}}_{\omega_{n-1}|\widehat{x_{k}}|^{n-1}}t^{1/(n-1)}\frac{dt}{t}\\
&\qquad\times\biggl(\int^{\omega_{n-1}r^{n-1}}_{t}\frac{f^{*}(s)}{s\varphi_{Y}(2\,(s/\omega_{n-1})^{1/(n-1)})}ds\biggr),
\end{align*}
and so Fubini's theorem gives
 \begin{align*}
 \int_{B_{1}(0,|\widehat{x_{k}}|)}g(\widehat{x_{k}},x_{k})&\gtrsim \int^{2^{n-1}\omega_{n-1}|\widehat{x_{k}}|^{n-1}}_{\omega_{n-1}|\widehat{x_{k}}|^{n-1}}\frac{f^{*}(t)}{t\varphi_{Y}(2\,(t/\omega_{n-1})^{1/(n-1)})}\,dt\\
 &\qquad\times\biggl(\int^{t}_{\omega_{n-1}|\widehat{x_{k}}|^{n-1}}s^{1/(n-1)-1}ds\biggr)\\
&\gtrsim\int^{2^{n-1}\omega_{n-1}|\widehat{x_{k}}|^{n-1}}_{(3/2)^{n-1}\omega_{n-1}|\widehat{x_{k}}|^{n-1}}\frac{f^{*}(t)\bigl(t^{1/(n-1)}-\omega^{1/(n-1)}_{n-1}|\widehat{x_{k}}|\bigr)}{t\varphi_{Y}(2\,(t/\omega_{n-1})^{1/(n-1)})}dt\\
 &\gtrsim \varphi_{Y^{\prime}}(2|\widehat{x_{k}}|)f^{*}(2^{n-1}\omega_{n-1}|\widehat{x_{k}}|^{n-1}).
 \end{align*}
 Hence, using \eqref{eq: 3mixta inclusion R(X1,Y)->R(X2,Y)}, we obtain
 \begin{align*}
 f^{*}(2^{n-1}\omega_{n-1}|\widehat{x_{k}}|^{n-1})\lesssim \psi_{k}(g,Y)(\widehat{x_{k}}), \ \ \widehat{x_{k}}\in B_{n-1}(0,r/2). 
 \end{align*}
 Therefore,
 \begin{align*}
 \bigl\|f^{*}\bigr\|_{\overline{X}_{2}(0,|I|^{n-1})}\lesssim \bigl\|\chi_{(0,\lambda_{f}(0)/2)}f^{*}(2t)\bigr\|_{\overline{X}_{2}(0,|I|^{n-1})}\lesssim  \bigl\|\psi_{k}(g,Y)\bigr\|_{X_{2}(I^{n-1})}.
 \end{align*}
 Thus, using \eqref{eq: 2mixta inclusion R(X1,Y)->R(X2,Y)}, we get
\begin{align*}
\bigl\|f\bigr\|_{X_{2}(I^{n-1})}\lesssim\bigl\|f\bigr\|_{X_{1}(I^{n-1})}.
\end{align*}
This proves that if $f\in X_{1}(I^{n-1}),$ with $\lambda_{f}(0)\leq \omega_{n-1}r^{n-1},$ then  $f\in X_{2}(I^{n-1}).$  The general case can be
treated as at the end of the proof of Theorem~\ref{teo: mixta inclusion R(X1,Y1)->R(X2,Y2)}.
\end{proof}

As a consequence  of Theorem~\ref{teo: mixta inclusion R(X1,Loo)->R(X2,Loo)} and Theorem~\ref{theorem: R(X1,Y)->R(X2,Y)}, we obtain the following result:

\begin{corollary}  Let $1<p_{1}\,,p_{3}<\infty,$ $1\leq q_{1}\,,q_{3}\leq \infty$ and either $p_{2}=q_{2}=1,$ $p_{2}=q_{2}=\infty$ or $1<p_{2}<\infty$ and $1\leq q_{2}\leq \infty.$ Then 
\begin{align*}\mathcal{R}(L^{p_{1},q_{1}},L^{p_{2},q_{2}})\hookrightarrow \mathcal{R}(L^{p_{3},q_{3}},L^{p_{2},q_{2}})\Leftrightarrow \begin{cases} p_{3}<p_{1}\,,1\leq q_{1}\,,q_{3}\leq \infty,\\
p_{1}=p_{3}\,, 1\leq q_{1}\leq q_{3}\leq \infty.\end{cases}
\end{align*}
\end{corollary}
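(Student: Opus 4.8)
The plan is to recognize the corollary as a direct combination of the two preceding theorems, once we identify $X_1=L^{p_1,q_1}(I^{n-1})$, $X_2=L^{p_3,q_3}(I^{n-1})$, and the common fiber space $Y=L^{p_2,q_2}(I)$. The decisive observation is that the allowed range $1<p_1<\infty$ forces the lower Boyd index of $X_1$ to be strictly positive: for a Lorentz space one has $\underline{\alpha}_{L^{p_1,q_1}}=\overline{\alpha}_{L^{p_1,q_1}}=1/p_1$, so that $\underline{\alpha}_{X_1}=1/p_1>0$. This is exactly the hypothesis that activates Theorem~\ref{theorem: R(X1,Y)->R(X2,Y)}.

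First I would split according to the nature of $Y$. If $p_2=q_2=\infty$, then $Y=L^{\infty}(I)$ and the equivalence $\mathcal{R}(X_1,L^{\infty})\hookrightarrow\mathcal{R}(X_2,L^{\infty})\Leftrightarrow X_1\hookrightarrow X_2$ is precisely Theorem~\ref{teo: mixta inclusion R(X1,Loo)->R(X2,Loo)}. In the remaining cases ($p_2=q_2=1$, giving $Y=L^1(I)$, or $1<p_2<\infty$ with $1\le q_2\le\infty$, giving a genuine Lorentz fiber), the space $Y$ is an r.i. space different from $L^{\infty}(I)$; since $\underline{\alpha}_{X_1}>0$ we may invoke Theorem~\ref{theorem: R(X1,Y)->R(X2,Y)} to obtain the same equivalence $\mathcal{R}(X_1,Y)\hookrightarrow\mathcal{R}(X_2,Y)\Leftrightarrow X_1\hookrightarrow X_2$. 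In either case the mixed norm embedding reduces to the single scalar embedding $L^{p_1,q_1}(I^{n-1})\hookrightarrow L^{p_3,q_3}(I^{n-1})$.

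It then remains only to translate this reduced embedding into the stated conditions on the parameters. Since $I^{n-1}$ has finite measure, the classical characterization of inclusions between Lorentz spaces (see \cite{Bennett}) gives $L^{p_1,q_1}(I^{n-1})\hookrightarrow L^{p_3,q_3}(I^{n-1})$ if and only if either $p_3<p_1$ (with no restriction on $q_1,q_3$) or $p_1=p_3$ together with $q_1\le q_3$. This is exactly the dichotomy displayed on the right-hand side of the statement, so the proof is complete.

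I expect no serious obstacle here, as the corollary is an application rather than a new difficulty. The only point requiring a little care is the bookkeeping of the case distinction on $Y$: one must verify that the degenerate endpoints $Y=L^1$ and $Y=L^{\infty}$ are each covered, the former by Theorem~\ref{theorem: R(X1,Y)->R(X2,Y)} (legitimate precisely because $\underline{\alpha}_{X_1}>0$) and the latter by Theorem~\ref{teo: mixta inclusion R(X1,Loo)->R(X2,Loo)}, and that the \emph{finite-measure} direction of the Lorentz embedding is used, so that a smaller second exponent $p_3$ yields a larger space.
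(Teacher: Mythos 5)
Your proposal is correct and follows essentially the same route as the paper, whose proof is exactly the combination of Theorem~\ref{teo: mixta inclusion R(X1,Loo)->R(X2,Loo)} (for the case $Y=L^{\infty}$), Theorem~\ref{theorem: R(X1,Y)->R(X2,Y)} (for the remaining fibers, applicable since $\underline{\alpha}_{L^{p_{1},q_{1}}}=1/p_{1}>0$), and the classical characterization of Lorentz space embeddings on a finite measure space. Your added bookkeeping of the case distinction on $Y$ and the explicit Boyd index computation simply make the paper's one-line argument fully detailed.
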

\begin{proof}It follows from  Theorem~\ref{teo: mixta inclusion R(X1,Loo)->R(X2,Loo)}, Theorem~\ref{theorem: R(X1,Y)->R(X2,Y)} and the classical embeddings for Lorentz spaces (see \cite{Bennett}). 
\end{proof}

Finally, let us study the embedding
\begin{align}\label{eq: R(X1,Loo)->R(X2,Lp1)}
\mathcal{R}(X_{1},L^{\infty})\hookrightarrow \mathcal{R}(X_{2},L^{1}).
\end{align}
Let us start by analyzing the case  $n=2.$  The following result will be useful for our purposes.

\begin{lemma}\label{lemma: Fubini mixed norm} Let $X(I)$ be an r.i. space. Then,  
 \begin{align*}
 \mathcal{R}(L^{1},X)\hookrightarrow \mathcal{R}(X,L^{1}).
 \end{align*}
\end{lemma}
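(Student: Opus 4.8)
The plan is to reduce the embedding to the classical Minkowski integral inequality for Banach function spaces. Since we work in the case $n=2$, we have $I^{n-1}=I$, a point of $I^{2}$ is $x=(x_{1},x_{2})$, and removing the first (resp.\ second) coordinate gives $\widehat{x_{1}}=x_{2}$ (resp.\ $\widehat{x_{2}}=x_{1}$). Writing out the relevant Benedek--Panzone norms, for $f\in\mathcal{M}(I^{2})$ we have
\begin{align*}
\bigl\|f\bigr\|_{\mathcal{R}_{1}(X,L^{1})}=\Bigl\|\int_{I}|f(x_{1},\cdot)|\,dx_{1}\Bigr\|_{X(I)},\qquad \bigl\|f\bigr\|_{\mathcal{R}_{2}(L^{1},X)}=\int_{I}\bigl\|f(x_{1},\cdot)\bigr\|_{X(I)}\,dx_{1},
\end{align*}
where in both expressions the $X(I)$-norm acts on the variable $x_{2}$ (on the left the $\cdot$ denotes $x_{2}$ and the integration is in $x_{1}$). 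Since both $\mathcal{R}(X,L^{1})$ and $\mathcal{R}(L^{1},X)$ are invariant under the coordinate exchange $x_{1}\leftrightarrow x_{2}$, which interchanges $\mathcal{R}_{1}$ and $\mathcal{R}_{2}$, it suffices to prove the single estimate $\bigl\|f\bigr\|_{\mathcal{R}_{1}(X,L^{1})}\le\bigl\|f\bigr\|_{\mathcal{R}_{2}(L^{1},X)}$ and then apply it to $(x_{1},x_{2})\mapsto f(x_{2},x_{1})$ to obtain the companion bound $\bigl\|f\bigr\|_{\mathcal{R}_{2}(X,L^{1})}\le\bigl\|f\bigr\|_{\mathcal{R}_{1}(L^{1},X)}$.

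The heart of the matter is the inequality
\begin{align*}
\Bigl\|\int_{I}|f(x_{1},\cdot)|\,dx_{1}\Bigr\|_{X(I)}\le\int_{I}\bigl\|f(x_{1},\cdot)\bigr\|_{X(I)}\,dx_{1},
\end{align*}
that is, the $X(I)$-norm (in $x_{2}$) of an $x_{1}$-integral is dominated by the $x_{1}$-integral of the $X(I)$-norms. This is exactly Minkowski's integral inequality for the Banach function space $X$. I would justify it through the associate space, using the duality formula $\bigl\|h\bigr\|_{X(I)}=\sup_{\|u\|_{X'(I)}\le1}\int_{I}|h(x_{2})|\,u(x_{2})\,dx_{2}$ (valid since $X''=X$ for Banach function spaces). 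Taking $h(x_{2})=\int_{I}|f(x_{1},x_{2})|\,dx_{1}$ and applying Tonelli's theorem to exchange the $x_{1}$- and $x_{2}$-integrations gives, for each admissible $u$, the bound $\int_{I}\bigl(\int_{I}|f(x_{1},x_{2})|\,u(x_{2})\,dx_{2}\bigr)dx_{1}\le\int_{I}\bigl\|f(x_{1},\cdot)\bigr\|_{X(I)}\,dx_{1}$, where the inner factor is estimated by H\"older's inequality for the pair $(X,X')$ and $\|u\|_{X'(I)}\le1$; taking the supremum over $u$ yields the claim. Substituting this into the reduction of the first paragraph gives $\bigl\|f\bigr\|_{\mathcal{R}_{1}(X,L^{1})}\le\bigl\|f\bigr\|_{\mathcal{R}_{2}(L^{1},X)}$.

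Finally, adding this to the symmetric estimate produces
\begin{align*}
\bigl\|f\bigr\|_{\mathcal{R}(X,L^{1})}=\bigl\|f\bigr\|_{\mathcal{R}_{1}(X,L^{1})}+\bigl\|f\bigr\|_{\mathcal{R}_{2}(X,L^{1})}\le\bigl\|f\bigr\|_{\mathcal{R}_{2}(L^{1},X)}+\bigl\|f\bigr\|_{\mathcal{R}_{1}(L^{1},X)}=\bigl\|f\bigr\|_{\mathcal{R}(L^{1},X)},
\end{align*}
which is the desired embedding, with embedding constant $1$. The only delicate point I anticipate is the rigorous justification of the integral Minkowski inequality: namely the measurability of $x_{1}\mapsto\bigl\|f(x_{1},\cdot)\bigr\|_{X(I)}$ and the legitimacy of the Tonelli exchange inside the supremum over the unit ball of $X'(I)$. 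These facts are standard for Banach function spaces, so the genuine content of the lemma is simply the sub-additivity of the norm under integration, with rearrangement invariance of $X$ playing no role whatsoever.
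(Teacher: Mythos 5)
Your proof is correct and is essentially the paper's own argument: the authors also prove the component estimate by duality with the associate space $X'$, Fubini's theorem, and H\"older's inequality, which is exactly the duality proof of Minkowski's integral inequality that you spell out. The only differences are cosmetic --- you name the inequality, track the indices so that $\bigl\|f\bigr\|_{\mathcal{R}_{1}(X,L^{1})}\le\bigl\|f\bigr\|_{\mathcal{R}_{2}(L^{1},X)}$ (getting embedding constant $1$), and observe correctly that rearrangement invariance is never used.
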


\begin{proof}Let $f\in \mathcal{R}(L^{1},X)$ and $k\in \left\{1,2\right\}.$ Then, using Fubini's theorem and H\"older's inequality, we get
\begin{align*}
\bigl\|f\bigr\|_{\mathcal{R}_{k}(X,L^{1})}&=\sup_{\bigl\|g\bigr\|_{X^{\prime}(I)}\leq 1}\int_{I}\int_{I}|g(\widehat{x_{k}})f(\widehat{x_{k}},x_{k})|d\widehat{x_{k}}dx_{k}\\
&\leq \int_{I}\psi_{k}(f,X)(\widehat{x_{k}})d\widehat{x_{k}}=\bigl\|f\bigr\|_{\mathcal{R}_{k}(L^{1},X)}\leq \bigl\|f\bigr\|_{\mathcal{R}(L^{1},X)}.
\end{align*}
That is, $\mathcal{R}(L^{1},X)\hookrightarrow \mathcal{R}(X,L^{1})$ and the proof is complete.
\end{proof}

\begin{corollary} For any couple of  r.i. spaces $X_{1}(I)$ and $X_{2}(I)$ ,  we have
 $$\mathcal{R}(X_{1},L^{\infty})\hookrightarrow \mathcal{R}(X_{2},L^{1}).$$
\end{corollary}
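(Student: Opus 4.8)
The plan is to realize the desired embedding as a chain of three elementary embeddings, with the Fubini-type Lemma~\ref{lemma: Fubini mixed norm} supplying the crucial middle step that interchanges the roles of the two factors. The two outer steps are pure monotonicity, relying on the fact that on the finite interval $I$ the extreme inclusions $X_{1}(I)\hookrightarrow L^{1}(I)$ and $L^{\infty}(I)\hookrightarrow X_{2}(I)$ hold for \emph{arbitrary} r.i. spaces.

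First I would record these two universal inclusions. By property (A5), $\int_{I}|f|\lesssim\bigl\|f\bigr\|_{X_{1}(I)}$, so $X_{1}(I)\hookrightarrow L^{1}(I)$. In the other direction, since $|f|\leq\bigl\|f\bigr\|_{L^{\infty}(I)}$ a.e., properties (A2) and (A4) give $\bigl\|f\bigr\|_{X_{2}(I)}\leq\bigl\|f\bigr\|_{L^{\infty}(I)}\bigl\|\chi_{I}\bigr\|_{X_{2}(I)}=\varphi_{X_{2}}(|I|)\bigl\|f\bigr\|_{L^{\infty}(I)}$, whence $L^{\infty}(I)\hookrightarrow X_{2}(I)$.

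Next I would assemble the chain. Applying Lemma~\ref{lema: inclusion norma mixta} to $X_{1}(I)\hookrightarrow L^{1}(I)$ (keeping the second factor fixed at $L^{\infty}$) yields $\mathcal{R}(X_{1},L^{\infty})\hookrightarrow\mathcal{R}(L^{1},L^{\infty})$. Specializing Lemma~\ref{lemma: Fubini mixed norm} to the choice $X=L^{\infty}$ gives $\mathcal{R}(L^{1},L^{\infty})\hookrightarrow\mathcal{R}(L^{\infty},L^{1})$. Finally, applying Lemma~\ref{lema: inclusion norma mixta} to $L^{\infty}(I)\hookrightarrow X_{2}(I)$ (keeping the second factor fixed at $L^{1}$) produces $\mathcal{R}(L^{\infty},L^{1})\hookrightarrow\mathcal{R}(X_{2},L^{1})$. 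Composing the three embeddings
\[
\mathcal{R}(X_{1},L^{\infty})\hookrightarrow\mathcal{R}(L^{1},L^{\infty})\hookrightarrow\mathcal{R}(L^{\infty},L^{1})\hookrightarrow\mathcal{R}(X_{2},L^{1})
\]
gives exactly the claim.

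There is no serious obstacle here: once the reduction is set up, every step is immediate. The only point demanding care is the bookkeeping of the embedding \emph{directions} --- in particular that the Fubini lemma transports $L^{\infty}$ from the inner (second) factor to the outer (first) factor, which is precisely what makes $X=L^{\infty}$ the correct specialization, and that the two universal inclusions run $X_{1}\hookrightarrow L^{1}$ and $L^{\infty}\hookrightarrow X_{2}$ rather than in reverse.
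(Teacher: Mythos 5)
Your proposal is correct and takes essentially the same route as the paper: both reduce to the extreme space via Lemma~\ref{lema: inclusion norma mixta} and then invoke the Fubini-type Lemma~\ref{lemma: Fubini mixed norm}, the paper's chain being $\mathcal{R}(X_{1},L^{\infty})\hookrightarrow\mathcal{R}(L^{1},L^{\infty})\hookrightarrow\mathcal{R}(L^{1},X_{2})\hookrightarrow\mathcal{R}(X_{2},L^{1})$. The only difference is the order of the last two steps --- you apply the Fubini lemma with $X=L^{\infty}$ and then enlarge the outer factor, while the paper enlarges the inner factor first and applies the Fubini lemma with $X=X_{2}$ --- which is immaterial.
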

\begin{proof} Using Lemma~\ref{lema: inclusion norma mixta} and Lemma~\ref{lemma: Fubini mixed norm}, we get
\begin{align*}
\mathcal{R}(X_{1},L^{\infty})\hookrightarrow \mathcal{R}(L^{1},L^{\infty})\hookrightarrow\mathcal{R}(L^{1},X_{2})\hookrightarrow \mathcal{R}(X_{2},L^{1}),
\end{align*}
as we wanted to see.
\end{proof}

Now, let us consider the embedding \eqref{eq: R(X1,Loo)->R(X2,Lp1)}, for the case $n\geq 3$.  In particular, we  shall provide a characterization of the smallest mixed norm  space of the form $\mathcal{R}(X_2,L^{1})$ in \eqref{eq: R(X1,Loo)->R(X2,Lp1)} once the mixed norm space $\mathcal{R}(X_{1},L^{\infty})$ is given. In order to do it, we begin with a preliminary lemma. For simplicity, we will assume that $|I|=1$.

\begin{lemma}\label{lemma: optimal range norm R(X,Loo)->R(Y,L1)} Let $X(I^{n-1})$ be an r.i. space, with $n\ge3$. Then, the functional defined by 
 \begin{align}\label{eq: optimal range norm R(X,Loo)->R(Y,L1)}
\bigl\|f\bigr\|_{ X_{\mathcal{R}(X,L^{\infty})}(I^{n-1})}=\bigl\|f^{*}(t^{(n-1)^{\prime}}) \bigr\|_{\overline{X}(0,1)}
,\ \ f\in\mathcal{M}_{+}(I^{n-1}),
 \end{align}
 is an r.i. norm.
 \end{lemma}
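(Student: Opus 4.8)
The plan is to verify the function-space axioms (A1)--(A6) for the functional $\|\cdot\|_{X_{\mathcal{R}(X,L^{\infty})}}$, the only delicate point being subadditivity. Throughout set $\phi(t)=t^{(n-1)^{\prime}}$; since $n\ge 3$ we have $1<(n-1)^{\prime}\le 2$, so $\phi$ is an increasing bijection of $(0,1)$ onto itself and $h_{f}(t):=f^{*}(t^{(n-1)^{\prime}})$ is a nonnegative, nonincreasing function on $(0,1)$. Because the functional depends on $f$ only through $f^{*}$, property (A6) is immediate; and since $0\le g\le f$ a.e. forces $g^{*}\le f^{*}$, while $0\le f_{j}\uparrow f$ forces $f_{j}^{*}\uparrow f^{*}$, the lattice property (A2) and the Fatou property (A3) are inherited directly from the corresponding axioms of $\overline{X}(0,1)$ applied to the associated $h$'s. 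Positivity and homogeneity are clear, so (A1) reduces to the triangle inequality.

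First I would dispose of (A4) and (A5). For $f=\chi_{I^{n-1}}$ one has $f^{*}=\chi_{(0,1)}$, hence $h_{f}\equiv 1$ on $(0,1)$ and $\|\chi_{I^{n-1}}\|_{X_{\mathcal{R}(X,L^{\infty})}}=\|\chi_{(0,1)}\|_{\overline{X}(0,1)}=\varphi_{X}(1)<\infty$, which is (A4). For (A5), the change of variables $s=t^{(n-1)^{\prime}}$ gives
\begin{align*}
\int_{0}^{1}f^{*}(s)\,ds=(n-1)^{\prime}\int_{0}^{1}f^{*}(t^{(n-1)^{\prime}})\,t^{(n-1)^{\prime}-1}\,dt\le (n-1)^{\prime}\int_{0}^{1}h_{f}(t)\,dt,
\end{align*}
since $t^{(n-1)^{\prime}-1}\le 1$ on $(0,1)$; applying (A5) for $\overline{X}(0,1)$ to $h_{f}$ then yields $\int_{I^{n-1}}|f|\lesssim \|f\|_{X_{\mathcal{R}(X,L^{\infty})}}$, and in particular $\|f\|_{X_{\mathcal{R}(X,L^{\infty})}}=0$ forces $f=0$ a.e. Here $n\ge 3$, equivalently $0<1/(n-1)<1$, is precisely what keeps the weights below integrable near the origin.

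The main obstacle is the triangle inequality, since $f\mapsto f^{*}$ is not additive. I claim it suffices to establish the submajorization
\begin{align}\label{eq:submaj}
\int_{0}^{t}h_{f+g}(r)\,dr\le \int_{0}^{t}h_{f}(r)\,dr+\int_{0}^{t}h_{g}(r)\,dr,\qquad 0<t<1.
\end{align}
Indeed, $h_{f+g}$ and $h_{f}+h_{g}$ are both nonincreasing, so \eqref{eq:submaj} states exactly that the decreasing rearrangement of $h_{f+g}$ is submajorized by that of $h_{f}+h_{g}$; the Hardy--Littlewood--P\'olya principle in $\overline{X}(0,1)$ together with the triangle inequality of $\overline{X}$ then gives $\|h_{f+g}\|_{\overline{X}}\le\|h_{f}+h_{g}\|_{\overline{X}}\le\|h_{f}\|_{\overline{X}}+\|h_{g}\|_{\overline{X}}$, which is subadditivity of $\|\cdot\|_{X_{\mathcal{R}(X,L^{\infty})}}$.

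To prove \eqref{eq:submaj} I would change variables $s=r^{(n-1)^{\prime}}$ in each integral; writing $T=t^{(n-1)^{\prime}}$ and noting $1/(n-1)^{\prime}-1=-1/(n-1)$, the common factor $1/(n-1)^{\prime}$ cancels and the claim becomes
\begin{align}\label{eq:weighted}
\int_{0}^{T}(f+g)^{*}(s)\,w(s)\,ds\le \int_{0}^{T}f^{*}(s)\,w(s)\,ds+\int_{0}^{T}g^{*}(s)\,w(s)\,ds
\end{align}
with the \emph{decreasing} weight $w(s)=s^{-1/(n-1)}$. Set $A(s)=\int_{0}^{s}(f+g)^{*}$ and $B(s)=\int_{0}^{s}(f^{*}+g^{*})$; the ordinary Hardy--Littlewood--P\'olya submajorization for sums gives $A\le B$ on $(0,1)$. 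Using $w(s)=w(T)+\int_{s}^{T}(-w'(u))\,du$ and Tonelli (all integrands nonnegative), I obtain the identity
\begin{align*}
\int_{0}^{T}(f+g)^{*}(s)\,w(s)\,ds=w(T)A(T)+\int_{0}^{T}(-w'(u))\,A(u)\,du,
\end{align*}
and the analogous identity for $B$. Since $w(T)\ge 0$, $-w'\ge 0$, and $A\le B$ pointwise, comparing the two identities yields \eqref{eq:weighted}, hence \eqref{eq:submaj}. The whole argument is thus driven by the single structural fact that the substitution produces a \emph{decreasing} weight $w$, which transports the elementary submajorization $A\le B$ through the weight. This completes the verification that $\|\cdot\|_{X_{\mathcal{R}(X,L^{\infty})}}$ is an r.i. norm.
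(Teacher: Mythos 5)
Your proof is correct, and it reaches the one delicate point (subadditivity) by a genuinely different mechanism than the paper. The paper rewrites the norm through the associate-space duality,
\begin{align*}
\bigl\|(f+g)^{*}(t^{(n-1)^{\prime}})\bigr\|_{\overline{X}(0,1)}=\frac1{(n-1)^{\prime}}\sup_{\bigl\|h\bigr\|_{X^{\prime}(I^{n-1})}\leq 1}\int^{1}_{0}(f+g)^{*}(t)\,t^{-1/(n-1)}\,h^{*}(t^{1/(n-1)^{\prime}})\,dt,
\end{align*}
and then applies the Hardy--Littlewood--P\'olya principle inside each dual pairing, the decreasing weight there being $t^{-1/(n-1)}h^{*}(t^{1/(n-1)^{\prime}})$, after which the supremum splits into the two norms. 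You instead prove the submajorization $h_{f+g}\prec h_{f}+h_{g}$ for the composed functions themselves --- via the same substitution and the same decreasing weight $s^{-1/(n-1)}$, with Hardy's lemma reproved by hand through the layer-cake representation $w(s)=w(T)+\int_{s}^{T}(-w^{\prime})$ and Tonelli --- and then invoke the HLP principle a single time in $\overline{X}(0,1)$, exploiting that $h_{f+g}$ and $h_{f}+h_{g}$ are already nonincreasing, so they coincide a.e.\ with their rearrangements. Both arguments ultimately rest on the same two facts, namely $(f+g)^{*}\prec f^{*}+g^{*}$ and the decreasingness of the weight produced by the power substitution; what your version buys is self-containedness, since it bypasses the duality representation of the $\overline{X}$-norm (which the paper uses without comment and which itself needs the identification of $\|u\|_{\overline{X}}$ with $\sup_{\|h\|\le1}\int u^{*}h^{*}$), at the cost of writing out Hardy's lemma explicitly. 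Your treatments of (A5) (change of variables plus the bound $t^{(n-1)^{\prime}-1}\le 1$) and of (A4) (giving $\varphi_{X}(1)$, consistent with the standing assumption $|I|=1$) differ mildly from the paper's pointwise inequality $f^{*}(t^{(n-1)^{\prime}})\ge f^{*}(t)$, but are equally valid, and your identification of where $n\ge 3$ enters (so that $(n-1)^{\prime}<\infty$ and the weight $s^{-1/(n-1)}$ is integrable near the origin) is accurate.
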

 
 \begin{proof}
The positivity and homogeneity of $\bigl\|\cdot\bigr\|_{X_{\mathcal{R}(X,L^{\infty})}(I^{n-1})}$ are clear. Next, let $f$ and $g$ be  measurable functions on $I^{n}.$ Then,
\begin{align*}
\bigl\|f+g\bigr\|_{X_{\mathcal{R}(X,L^{\infty})}(I^{n-1})}&=\bigl\|(f+g)^{*}(t^{(n-1)^{\prime}})\bigr\|_{\overline{X}(0,1)}\\
&=\frac1{(n-1)'}\sup_{\bigl\|h\bigr\|_{X^{\prime}(I^{n-1})}\leq 1}\int^{1}_{0}(f+g)^{*}(t)t^{-1/(n-1)} h^{*}(t^{1/(n-1)'})dt.
\end{align*}
Since  $(f+g)^{*}\prec f^{*}+g^{*}$ (cf. \cite[Theorem II.3.4]{Bennett}),  Hardy-Littlewood-P\'olya Principle  implies that
\begin{align*}
\bigl\|f+g\bigr\|_{X_{\mathcal{R}(X,L^{\infty})}(I^{n-1})}&\le \frac1{(n-1)'}\sup_{\bigl\|h\bigr\|_{X^{\prime}(I^{n-1})}\leq 1}\int^{1}_{0}f^{*}(t)t^{-1/(n-1)} h^{*}(t^{1/(n-1)'})dt\\
&\qquad+\frac1{(n-1)'}\sup_{\bigl\|h\bigr\|_{X^{\prime}(I^{n-1})}\leq 1}\int^{1}_{0}g^{*}(t)t^{-1/(n-1)} h^{*}(t^{1/(n-1)'})dt\\
&=\bigl\|f\bigr\|_{X_{\mathcal{R}(X,L^{\infty})}(I^{n-1})}+\bigl\|g\bigr\|_{X_{\mathcal{R}(X,L^{\infty})}(I^{n-1})}.
\end{align*}

The proof of  (A2)-(A4) and (A6)  for $\bigl\|\cdot\bigr\|_{X_{\mathcal{R}(X,L^{\infty})}(I^{n-1})}$ requires only the corresponding axioms for $\bigl\|\cdot\bigr\|_{\overline{X}(0,1)},$ hence we shall omit them. Finally, to prove property (A5), we fix any $f\in\mathcal{M}(I^{n-1}).$ Then,  
\begin{align*}
\bigl\|f^{*}(t^{(n-1)^{\prime}})\bigr\|_{\overline{X}(0,1)}&\gtrsim \int^{1}_{0}f^{*}(t^{(n-1)^{\prime}})dt\geq \int^{1}_{0} f^{*}(t)dt.
\end{align*}
 \end{proof}
 
\begin{theorem}\label{teo: rango R(X1,Loo)->R(X2,L1)} 
 Let $n\geq 3.$ Let $X(I^{n-1})$ be an r.i. space and let $X_{\mathcal{R}(X,L^{\infty})}(I^{n-1})$ be as in \eqref{eq: optimal range norm R(X,Loo)->R(Y,L1)}.
Then, the embedding
\begin{align}\label{eq: 1rango R(X1,Loo)->R(X2,L1)}
\mathcal{R}(X,L^{\infty})\hookrightarrow \mathcal{R}(X_{\mathcal{R}(X,L^{\infty})},L^{1})
\end{align}
 holds. Moreover, $\mathcal{R}(X_{\mathcal{R}(X,L^{\infty})},L^{1})$ is the smallest  space of the form $\mathcal{R}(Y,L^{1})$ that verifies  \eqref{eq: 1rango R(X1,Loo)->R(X2,L1)}.
 \end{theorem}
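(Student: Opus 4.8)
The plan is to prove the two assertions separately: first the embedding \eqref{eq: 1rango R(X1,Loo)->R(X2,L1)}, and then its optimality among all ranges of the form $\mathcal{R}(Y,L^{1})$.

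For the embedding I would fix $k\in\{1,\dots,n\}$ and estimate $\bigl\|\psi_{k}(f,L^{1})\bigr\|_{X_{\mathcal{R}(X,L^{\infty})}}$. Writing $h=\psi_{k}(f,L^{1})$, which lives on $I^{n-1}$, the key observation is that, by \eqref{eq: optimal range norm R(X,Loo)->R(Y,L1)}, the function $H(u)=h^{*}(u^{(n-1)'})$ satisfies $\lambda_{H}(t)=\lambda_{h}(t)^{1/(n-1)'}$, so $\bigl\|h\bigr\|_{X_{\mathcal{R}(X,L^{\infty})}}=\bigl\|H\bigr\|_{\overline{X}(0,1)}$. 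Applying the Loomis-Whitney inequality (Corollary~\ref{corollary: distribution and essential projection}) in dimension $n-1$ (this is where $n\ge 3$ is used) together with the arithmetic-geometric mean inequality gives
\begin{align*}
\lambda_{H}(t)=\lambda_{h}(t)^{1/(n-1)'}\le \frac{1}{n-1}\sum_{j\neq k}\lambda_{\psi_{j}(h,L^{\infty})}(t),\quad t>0,
\end{align*}
each $\psi_{j}(h,L^{\infty})$ being defined on $I^{n-2}$. This distributional bound converts into the pointwise estimate $H^{*}(s)\le\sum_{j\neq k}\psi_{j}(h,L^{\infty})^{*}(s/(n-1))$, so that, by boundedness of the dilation operator $E_{n-1}$ on $\overline{X}$,
\begin{align*}
\bigl\|h\bigr\|_{X_{\mathcal{R}(X,L^{\infty})}}\lesssim\sum_{j\neq k}\bigl\|\psi_{j}(h,L^{\infty})\bigr\|_{X(I^{n-2})}.
\end{align*}
Finally the pointwise inequality $\psi_{j}(h,L^{\infty})\le\psi_{k}(\psi_{j}(f,L^{\infty}),L^{1})$ (the supremum of an integral is at most the integral of the supremum), combined with the contraction property of the averaging operator $G\mapsto\psi_{k}(G,L^{1})$ on every r.i. space---which follows from the Hardy-Littlewood-P\'olya principle, since $|I|=1$ forces $\psi_{k}(G,L^{1})^{*}\prec G^{*}$---gives $\bigl\|\psi_{j}(h,L^{\infty})\bigr\|_{X(I^{n-2})}\le\bigl\|f\bigr\|_{\mathcal{R}_{j}(X,L^{\infty})}$. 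Summing over $j$ proves \eqref{eq: 1rango R(X1,Loo)->R(X2,L1)}. I should stress that the normalization $|I|=1$ is exactly what makes $\overline{X}(0,1)$ insensitive to the ambient dimension, and hence what lets these dimension-shifting steps close.

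For optimality, by Lemma~\ref{lema: inclusion norma mixta} it suffices to show that if $\mathcal{R}(X,L^{\infty})\hookrightarrow\mathcal{R}(Y,L^{1})$, then $X_{\mathcal{R}(X,L^{\infty})}(I^{n-1})\hookrightarrow Y(I^{n-1})$, and I would establish this by testing the hypothesis on an extremal function adapted to a given $g$. Working in the positive cone and assuming first that $\lambda_{g}(0)$ is small (the general case being reduced by the truncation argument at the end of Theorem~\ref{teo: mixta inclusion R(X1,Y1)->R(X2,Y2)}), the natural candidate, which saturates the Loomis-Whitney inequality, is the ``jack''-shaped function
\begin{align*}
f(x)=g^{*}\bigl(x_{(2)}^{\,n-1}\bigr),
\end{align*}
where $x_{(2)}$ is the second largest of $|x_{1}|,\dots,|x_{n}|$; equivalently, the super-level sets of $f$ are the unions over $k$ of the slabs $\{|x_{i}|<\delta(s),\ i\neq k\}$ with $\delta(s)=\lambda_{g}(s)^{1/(n-1)}$. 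The two estimates to verify are: (i) $\psi_{k}(f,L^{\infty})^{*}(t)=g^{*}\bigl((t/(n-1))^{(n-1)'}\bigr)$, a bounded dilation of $g^{*}(t^{(n-1)'})$, whence $\bigl\|f\bigr\|_{\mathcal{R}(X,L^{\infty})}\lesssim\bigl\|g\bigr\|_{X_{\mathcal{R}(X,L^{\infty})}}$ by boundedness of $E_{n-1}$ on $\overline{X}$; and (ii) $\psi_{k}(f,L^{1})^{*}\ge g^{*}$, whence $\bigl\|g\bigr\|_{Y}\le\bigl\|f\bigr\|_{\mathcal{R}(Y,L^{1})}$. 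For (i) one checks that $\psi_{k}(f,L^{\infty})(\widehat{x_{k}})$ equals $g^{*}$ evaluated at the second largest of the remaining coordinates, whose distribution function is $\approx\lambda_{g}(\cdot)^{1/(n-1)'}$; for (ii) one keeps only the slab that is long in the $x_{k}$ direction, obtaining the pointwise bound $\psi_{k}(f,L^{1})(\widehat{x_{k}})\ge g^{*}\bigl((\max_{i\neq k}|x_{i}|)^{n-1}\bigr)$, a function whose rearrangement is exactly $g^{*}$. Feeding (i) and (ii) into the assumed embedding yields $\bigl\|g\bigr\|_{Y}\lesssim\bigl\|g\bigr\|_{X_{\mathcal{R}(X,L^{\infty})}}$, as required.

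The main obstacle is the optimality half, and specifically the choice of $f$: a single-scale model such as a cube or a radial profile reproduces the correct behaviour only at the critical smoothness of $g$, losing a power of the radius in the $L^{1}$ direction for less singular $g$, and so fails to give (ii) for all $g$. The gain comes precisely from elongating the level sets along the coordinate directions (the union-of-slabs geometry), which inflates the one-dimensional $L^{1}$ integrals without enlarging the projections that govern the $L^{\infty}$ direction. Verifying that this geometry simultaneously delivers the sharp upper bound in (i) and the matching lower bound in (ii)---that is, that it is a genuine near-extremizer for Loomis-Whitney---is the delicate point; the reductions (truncation to $\lambda_{g}(0)$ small, passage from $g$ to $g^{*}$, and the case of a general interval $I=(-a,b)$) are routine and parallel the earlier theorems.
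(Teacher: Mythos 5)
Your proof is correct, but both halves take a genuinely different route from the paper. For the embedding \eqref{eq: 1rango R(X1,Loo)->R(X2,L1)} the paper does not argue directly: it combines the Algervik--Kolyada endpoint \eqref{eq: viktor-robert} (quoted from the literature) with $L^{\infty}\hookrightarrow\mathcal{R}(L^{\infty},L^{1})$, transfers via Theorem~\ref{interpolkf} to an inequality between $K$-functionals, and then uses Lemma~\ref{lemma: lower bound for K-functional}, Theorem~\ref{theorem: K-functional of R(X,Loo) and Loo} and the Hardy--Littlewood--P\'olya principle to pass from the $L^1$-level inequality $\int^{t}_{0}\psi^{*}_{j}(f,L^{1})(s^{(n-1)^{\prime}})\,ds\lesssim\sum_{k}\int^{Ct}_{0}\psi^{*}_{k}(f,L^{\infty})(s)\,ds$ to the norm in $\overline{X}$. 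Your argument is instead direct and self-contained: Loomis--Whitney (Corollary~\ref{corollary: distribution and essential projection }) in dimension $n-1$ applied to $h=\psi_{k}(f,L^{1})$, AM--GM to linearize the geometric mean, the interchange inequality $\psi_{j}(h,L^{\infty})\leq\psi_{k}(\psi_{j}(f,L^{\infty}),L^{1})$, and the fact that averaging out one variable is simultaneously an $L^{1}$- and $L^{\infty}$-contraction (hence, by Hardy--Littlewood--P\'olya, a contraction on every r.i.\ space; concretely, $\int_{E}\psi_{k}(G,L^{1})=\int_{E\times I}|G|\leq\int_{0}^{|E|}G^{*}$ since $|I|=1$). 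All these steps check out, and a pleasant byproduct is that specializing your argument to $X=L^{1}$ reproves \eqref{eq: viktor-robert}, which the paper must import as a black box; what the paper's route buys in exchange is brevity, since it recycles the $K$-functional machinery of Section~3. For optimality, your skeleton (small $\lambda_{g}(0)$, reduction to $X_{\mathcal{R}(X,L^{\infty})}(I^{n-1})\hookrightarrow Y(I^{n-1})$, truncation as in Theorem~\ref{teo: mixta inclusion R(X1,Y1)->R(X2,L1)} replaced by Theorem~\ref{teo: mixta inclusion R(X1,Y1)->R(X2,Y2)}, then Lemma~\ref{lema: inclusion norma mixta}) matches the paper exactly, but your test function does not: the paper takes the cylinder $g(x)=f^{*}(\omega_{n-1}|\widehat{x_{n}}|^{n-1})\chi_{B_{n-1}(0,r)\times I}(x)$, radial in $n-1$ variables and constant in $x_{n}$, using only the distinguished $n$-th direction for the $L^{1}$ lower bound, whereas you take the symmetric second-max function $g^{*}(x_{(2)}^{\,n-1})$. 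Both deliver your estimates (i) and (ii) --- note that your identity in (i) holds only up to bounded dilations and constants (slab overlaps, factors $2^{n-2}$, boundary effects), which is all you use --- but your heuristic that ``a radial profile fails'' is accurate only for a genuinely radial function on $I^{n}$: the paper's cylinder over an $(n-1)$-dimensional radial profile already suffices and is computationally lighter, so the jack geometry, while elegant in treating all coordinates symmetrically, is not needed.
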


\begin{proof}Lemma~\ref{lemma: optimal range norm R(X,Loo)->R(Y,L1)} gives us that $X_{\mathcal{R}(X,L^{\infty})}(I^{n-1})$ is an r.i. space equipped with the norm $ \bigl\|\cdot\bigr\|_{X_{\mathcal{R}(X,L^{\infty})}(I^{n-1})}.$ Now, let us see that the embedding \eqref{eq: 1rango R(X1,Loo)->R(X2,L1)} holds. In fact, if $f\in \mathcal{R}(X,L^{\infty})$ then, combining 
\begin{align*}
  L^{\infty}(I^{n})=\mathcal{R}(L^{\infty},L^{\infty})\hookrightarrow \mathcal{R}(L^{\infty},L^{1}),
\end{align*}
with the embedding \eqref{eq: viktor-robert} and using Theorem~\ref{interpolkf}, we deduce that
 \begin{align*}
K(f,t; \mathcal{R}(L^{(n-1)^{\prime},1},L^{1}),\mathcal{R}(L^{\infty},L^{1}))\lesssim K(f,Ct; \mathcal{R}(L^{1},L^{\infty}),L^{\infty}).
\end{align*}
Hence, using  Lemma~\ref{lemma: lower bound for K-functional} and  Theorem~\ref{theorem: K-functional of R(X,Loo) and Loo}, we get 
\begin{align*}
\int^{t}_{0}\psi^{*}_{j}(f,L^{1})(s^{(n-1)^{\prime}})ds\lesssim \sum^{n}_{k=1}\int^{Ct}_{0}\psi^{*}_{k}(f,L^{\infty})(s)ds,\ \ j\in\left\{1,\ldots,n\right\}.
\end{align*}
So, using the Hardy-Littlewood inequality and the subadditivity  of $\bigl\|\cdot\bigr\|_{\overline{X}(0,1)},$ we get
\begin{align*}
\bigl\|f\bigr\|_{\mathcal{R}_{j}(X_{\mathcal{R}(X,L^{\infty})},L^{1})}&=\bigl\|\psi^{*}_{j}(f,L^{1})\bigr\|_{X_{\mathcal{R}(X,L^{\infty})}(I^{n-1})}\\
&=\bigl\|\psi^{*}_{j}(f,L^{1})(s^{(n-1)^{\prime}})\bigr\|_{\overline{X}(0,1)}\\
&\lesssim \sum^{n}_{k=1} \bigl\|\psi^{*}_{k}(f,L^{\infty})(Ct)\bigr\|_{\overline{X}(0,1)}\lesssim\bigl\|f\bigr\|_{\mathcal{R}(X,L^{\infty})},
\end{align*}
for any $ j\in\left\{1,\ldots,n\right\}.$ Therefore, the embedding
\begin{align*}
 \mathcal{R}(X,L^{\infty})\hookrightarrow \mathcal{R}(X_{\mathcal{R}(X,L^{\infty})},L^{1})
 \end{align*}
 holds, as we wanted to see.

Now, let us prove that $\mathcal{R}(X_{\mathcal{R}(X,L^{\infty})},L^{1})$ is the smallest r.i. space for which \eqref{eq: 1rango R(X1,Loo)->R(X2,L1)} holds. That is, let us see  that if  a mixed norm  space $\mathcal{R}(Y,L^{1})$ satisfies 
\begin{align*}
\mathcal{R}(X,L^{\infty})\hookrightarrow \mathcal{R}(Y,L^{1}),
\end{align*}
then
\begin{align}\label{eq: 1rango optimo R(X,Loo)->R(Y,L1)}
\mathcal{R}(X_{\mathcal{R}(X,L^{\infty})},L^{1})\hookrightarrow \mathcal{R}(Y,L^{1}).
\end{align}
We assume that $I=(-a,b),$ with $a,b\in\mathbb R_{+},$  and $0<r<\min(a,b).$ Given any function  $f\in X_{\mathcal{R}(X,L^{\infty})}(I^{n-1}),$ with $\lambda_{f}(0)\leq \omega_{n-1}r^{(n-1)},$  we define 
\begin{align*}
g(x)=\begin{cases}
f^{*}(\omega_{n-1}|\widehat{x_{n}}|^{(n-1)}),& \textnormal{if $(\widehat{x_{n}},x_{n})\in B_{n-1}(0,r)\times I,$}\\
0,&\textnormal{otherwise.}
\end{cases}
\end{align*} 
We fix  $k\in \left\{1,\ldots,n-1\right\}.$  Then,
\begin{align*}
\psi_{k}(g,L^{\infty})(\widehat{x_{k}})=f^{*}(\omega_{n-1}|\widehat{x_{k,n}}|^{(n-1)}),\ \ \  \textnormal{if $(\widehat{x_{n}},x_{n})\in B_{n-2}(0,r)\times I,$}
\end{align*}
and $\psi_{k}(g,L^{\infty})(\widehat{x_{k}})=0$, otherwise. Thus, for any $ k\in\bigl\{1,\ldots,n-1\bigr\},$
\begin{align}\label{eq: 2rango optimo R(X,Loo)->R(Y,Lp1)}
\bigl\|g\bigr\|_{\mathcal{R}_{k}(X,L^{\infty})}\lesssim\bigl\|f^{*}(t^{(n-1)^{\prime}}) \bigr\|_{\overline{X}(0,|I|^{n-1})}= \bigl\|f\bigr\|_{ X_{\mathcal{R}(X,L^{\infty})}(I^{n-1})}.
\end{align}
On the other hand,
\begin{align*}
\psi_{n}(g,L^{\infty})(\widehat{x_{n}})=f^{*}(\omega_{n-1}|\widehat{x_{n}}|^{n-1}),\ \ \  \textnormal{if $\widehat{x_{n}}\in B_{n-1}(0,r),$}
\end{align*}
and $\psi_{k}(g,L^{\infty})(\widehat{x_{k}})=0.$ So,
\begin{align}\label{eq: 3rango optimo R(X,Loo)->R(Y,Lp1)}
\bigl\|g\bigr\|_{\mathcal{R}_{n}(X,L^{\infty})}=\big\| f\big\|_{X(I^{n-1})}\leq \bigl\|f^{*}(t^{(n-1)^{\prime}}) \bigr\|_{\overline{X}(0,|I|^{n-1})}= \bigl\|f\bigr\|_{ X_{\mathcal{R}(X,L^{\infty})}(I^{n-1})}.
\end{align}
Therefore, using \eqref{eq: 2rango optimo R(X,Loo)->R(Y,Lp1)} and \eqref{eq: 3rango optimo R(X,Loo)->R(Y,Lp1)}, we get
 \begin{align}\label{eq: 4rango optimo R(X,Loo)->R(Y,Lp1)}
 \bigl\|g\bigr\|_{\mathcal{R}(X,L^{\infty})}\lesssim\bigl\|f\bigr\|_{ X_{\mathcal{R}(X,L^{\infty})}(I^{n-1})}.
 \end{align}
 So, using $\mathcal{R}(X,L^{\infty})\hookrightarrow\mathcal{R}(Y,L^{1})$ and \eqref{eq: 4rango optimo R(X,Loo)->R(Y,Lp1)}, we get
  \begin{align}\label{eq: 5rango optimo R(X,Loo)->R(Y,L1)}
 \bigl\|g\bigr\|_{\mathcal{R}(Y,L^{1})}\lesssim  \bigl\|f\bigr\|_{X_{\mathcal{R}(X,L^{\infty})}(I^{n-1})}.
 \end{align}
 But, as before,
 \begin{align*}
 \bigl\|g\bigr\|_{\mathcal{R}(Y,L^{1})}\approx \big\| f\big\|_{Y(I^{n-1})}
 \end{align*}
 and hence, using \eqref{eq: 5rango optimo R(X,Loo)->R(Y,L1)}, we get
 \begin{align*}
\bigl\|f\bigr\|_{Y(I^{n-1})}\lesssim \bigl\|f\bigr\|_{X_{\mathcal{R}(X,L^{\infty})}(I^{n-1})}.
\end{align*}
From this, we obtain that any $f\in X_{R(X,L^{\infty})}(I^{n-1})$, with $\lambda_{f}(0) \leq\omega_{n-1}r^{n-1}$, belongs to $Y (I^{n-1})$. The general case can be proved as in the proof of  Theorem ~\ref{teo: mixta inclusion R(X1,Y1)->R(X2,Y2)}. Thus, we have
$X_{R(X,L^{\infty})}(I^{n-1})\hookrightarrow Y (I^{n-1})$,
and so,  Lemma ~\ref{lema: inclusion norma mixta} implies that the embedding \eqref{eq: 1rango optimo R(X,Loo)->R(Y,L1)}
 holds, as we wanted to see.
\end{proof}

\begin{remark}{\rm
Theorem~\ref{teo: rango R(X1,Loo)->R(X2,L1)} can be extended to the case $p>1$ as follows: if we define $\| f\|_{\widetilde X_p(I^{n-1})}=\| f^*(t^{(p(n-1))'})\|_{\overline{X}(0,1)}$, then with a similar proof one can get
\begin{equation}\label{notopt}
\mathcal{R}(X,L^{\infty})\hookrightarrow \mathcal{R}(\widetilde X_p,L^{p,1}),
\end{equation}
although this embedding  is not optimal in general. For example, with $X=L^{p,1}(I^{n-1})$ then $\widetilde X_p(I^{n-1})=L^{p(p(n-1))',1}(I^{n-1})$, but in this case
$$
\mathcal{R}(L^{p,1},L^{\infty})\hookrightarrow \mathcal{R}(L^{p(n-1)',\infty},L^{p,1}),
$$
and clearly $L^{p(n-1)',\infty}(I^{n-1})\hookrightarrow L^{p(p(n-1))',1}(I^{n-1})$.

However, if $X=L^1(I^{n-1})$, then \eqref{notopt} gives
$$
\mathcal{R}(L^{1},L^{\infty})\hookrightarrow \mathcal{R}(L^{(p(n-1))',1},L^{p,1}),
$$
which is indeed optimal (as proved in \cite{Robert-Viktor}).
}
\end{remark}

%
%
%

\section{Fournier embeddings}\label{section: Fournier embeddings}
Our main goal, in this section,  is to study the following embedding
\begin{align}\label{eq: introduccion R(X,Loo)->Z}
\mathcal{R}(X,L^{\infty})\hookrightarrow Z(I^{n}).
\end{align}
In particular, we are interested in the following problems:
\begin{enumerate}[(i)]
	\item Given a mixed norm space $\mathcal{R}(X,L^{\infty}),$ we would like to find the largest r.i. range space $Z(I^{n})$  satisfying  \eqref{eq: introduccion R(X,Loo)->Z}.
	\item Now, let us suppose that the range space is given $Z(I^{n}).$ We would like to find  the largest of the form  $\mathcal{R}(X,L^{\infty})$ for which \eqref{eq: introduccion R(X,Loo)->Z} holds.
	\end{enumerate}
 	
The main motivation to consider these questions come from the embedding due to Fournier \cite{Fournier}, which shows that, if $n\geq 2,$
\begin{align}\label{eq: Fournier R(L,Loo)->Ln',1}
\mathcal{R}(L^{1},L^{\infty})\hookrightarrow L^{n^{\prime},1}(I^{n}).
\end{align}

\subsection{Necessary and sufficient conditions}
 Now, our main purpose is to find  necessary and sufficient conditions
on $X(I^{n-1})$  and $Z(I^{n})$ under which we have the embedding \eqref{eq: introduccion R(X,Loo)->Z}.

\begin{theorem}\label{theorem: mixta inclusion R(X,Loo)->Z}  Let $X(I^{n-1})$ and $Z(I^{n})$ be r.i. spaces. Then, the embedding
\begin{align*}
\mathcal{R}(X,L^{\infty})\hookrightarrow Z(I^{n})
\end{align*}
holds, if and only if, 
\begin{align}\label{eq: 1mixta inclusion R(X,Loo)->Z}
\bigl\|f^{*}(t^{1/n^{\prime}})\bigr\|_{\overline{Z}(0,|I|^{n})}\lesssim \bigl\|f^{*}\bigr\|_{\overline{X}(0,|I|^{n-1})},\ \ \ f\in X(I^{n-1}).
\end{align}
\end{theorem}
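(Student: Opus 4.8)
The plan is to prove the two implications separately. The reverse direction (that \eqref{eq: 1mixta inclusion R(X,Loo)->Z} forces the embedding) will be obtained from a single pointwise rearrangement estimate, while the forward direction (necessity of \eqref{eq: 1mixta inclusion R(X,Loo)->Z}) will be established by feeding an extremal radial test function into the embedding. Throughout I may assume the functions are nonnegative and work with their decreasing rearrangements, since every quantity involved depends only on these.

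For sufficiency the heart of the matter is the estimate
\[
f^{*}(u)\le \sum_{k=1}^{n}\psi^{*}_{k}(f,L^{\infty})(u^{1/n^{\prime}}),\qquad 0<u<|I|^{n},
\]
which I would extract directly from Corollary~\ref{corollary: distribution and essential projection }. Indeed, fixing $u$ and setting $\beta=\sum_{k=1}^{n}\psi^{*}_{k}(f,L^{\infty})(u^{1/n^{\prime}})$, one has $\beta\ge \psi^{*}_{k}(f,L^{\infty})(u^{1/n^{\prime}})$ for each $k$, so that $\lambda_{\psi_{k}(f,L^{\infty})}(\beta)\le u^{1/n^{\prime}}$ by the elementary bound $\lambda_{g}(g^{*}(s))\le s$. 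Substituting into Corollary~\ref{corollary: distribution and essential projection } and using the arithmetic identity $n^{\prime}(n-1)=n$ gives $\lambda_{f}(\beta)\le (u^{1/n^{\prime}})^{n/(n-1)}=u$, whence $f^{*}(u)\le \beta$. Once this is in hand, monotonicity of the r.i.\ norm, the triangle inequality, the Luxemburg representation (which identifies $\|\psi^{*}_{k}\|_{\overline{X}}$ with $\|\psi_{k}(f,L^{\infty})\|_{X}$), and the hypothesis \eqref{eq: 1mixta inclusion R(X,Loo)->Z} applied with $f$ replaced by each $\psi_{k}(f,L^{\infty})\in X(I^{n-1})$ yield
\[
\|f\|_{Z(I^{n})}=\|f^{*}\|_{\overline{Z}(0,|I|^{n})}\le \sum_{k=1}^{n}\bigl\|\psi^{*}_{k}(f,L^{\infty})(u^{1/n^{\prime}})\bigr\|_{\overline{Z}(0,|I|^{n})}\lesssim \sum_{k=1}^{n}\|\psi_{k}(f,L^{\infty})\|_{X(I^{n-1})}=\|f\|_{\mathcal{R}(X,L^{\infty})}.
\]

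For necessity I would argue by testing. Assuming the embedding and normalizing $|I|=1$, write $I=(-a,b)$ and fix $0<r<\min(a,b)$. Given $h\in X(I^{n-1})$ whose support is small enough that the construction fits inside $I^{n}$, I would set $F(x)=h^{*}\bigl((\omega_{n}|x|^{n})^{1/n^{\prime}}\bigr)$ on $B_{n}(0,r)$ and $F=0$ otherwise. The radial-rearrangement identity recalled in Section~\ref{sec: preliminaries} gives $F^{*}(u)=h^{*}(u^{1/n^{\prime}})$, so $\|F\|_{Z}=\|h^{*}(u^{1/n^{\prime}})\|_{\overline{Z}}$. Since $F$ is radially decreasing, $\psi_{k}(F,L^{\infty})(\widehat{x_{k}})=h^{*}\bigl((\omega_{n}|\widehat{x_{k}}|^{n})^{1/n^{\prime}}\bigr)$, and after exploiting $\tfrac{n}{n-1}\cdot\tfrac{1}{n^{\prime}}=1$ its rearrangement equals $h^{*}(c_{n}s)$ with the dimensional constant $c_{n}=\omega_{n}^{(n-1)/n}/\omega_{n-1}$. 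The boundedness of the dilation operator on $\overline{X}(0,|I|^{n-1})$ then gives $\|F\|_{\mathcal{R}(X,L^{\infty})}=\sum_{k}\|\psi_{k}(F,L^{\infty})\|_{X}\lesssim \|h^{*}\|_{\overline{X}}$. Inserting $F$ into the embedding produces \eqref{eq: 1mixta inclusion R(X,Loo)->Z} for such truncated $h$, and the passage to an arbitrary $h\in X(I^{n-1})$ would be carried out exactly as at the end of the proof of Theorem~\ref{teo: mixta inclusion R(X1,Y1)->R(X2,Y2)}, splitting $h$ into its bounded part and its unbounded part of small support.

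I expect the genuine difficulty to lie entirely in the necessity half: one must design the test function so that \emph{simultaneously} its global rearrangement reproduces $h^{*}(u^{1/n^{\prime}})$ and each of its one-dimensional $L^{\infty}$-sections has a rearrangement comparable to a fixed dilate of $h^{*}$. The algebraic fact that makes this possible is precisely $n^{\prime}=n/(n-1)$, which turns the change of variables between the $n$-dimensional ball and its $(n-1)$-dimensional projection into a pure dilation, so that no loss beyond a multiplicative constant occurs. The remaining work — the measure/support constraints on $h$ and the reduction to full support — is routine and follows the template already used repeatedly in the paper, so I would only sketch it.
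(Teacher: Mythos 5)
Your proposal is correct and follows essentially the same route as the paper's own proof: the sufficiency half rests on the identical pointwise bound $f^{*}(s)\le\sum_{k=1}^{n}\psi^{*}_{k}(f,L^{\infty})(s^{1/n^{\prime}})$ extracted from Corollary~\ref{corollary: distribution and essential projection } exactly as in the paper (where it appears as \eqref{refrem}), and the necessity half uses the same radial test function --- your $F(x)=h^{*}\bigl((\omega_{n}|x|^{n})^{1/n^{\prime}}\bigr)\chi_{B_{n}(0,r)}(x)$ is literally the paper's $g(x)=f^{*}(\omega_{n}^{1/n^{\prime}}|x|^{n-1})\chi_{B_{n}(0,r)}(x)$ since $(\omega_{n}|x|^{n})^{1/n^{\prime}}=\omega_{n}^{1/n^{\prime}}|x|^{n-1}$ --- together with the boundedness of the dilation operator and the same truncation reduction to functions of small support taken from the end of the proof of Theorem~\ref{teo: mixta inclusion R(X1,Y1)->R(X2,Y2)}. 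There is no gap to flag.
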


\begin{proof} Let us first suppose that the embedding
\begin{align*}
\mathcal{R}(X,L^{\infty})\hookrightarrow Z(I^{n})
\end{align*}
holds. As before,  we assume that $I=(-a,b),$ with $a,b\in\mathbb R_{+}$ and $0<r<\min(a,b).$ Given any  $f\in X(I^{n-1}),$ with $\lambda_{f}(0)\leq \omega^{1/n^{\prime}}_{n}r^{n-1},$ we define 
\begin{align*}
g(x)=\begin{cases}
f^{*}(\omega^{1/n^{\prime}}_{n}|x|^{n-1}),& \textnormal{if $x\in B_{n}(0,r),$}\\
0,&\textnormal{otherwise.}
\end{cases}
\end{align*} 
Now, we fix any $k\in \left\{1,\ldots,n\right\}.$ Then,
\begin{align*}
\psi_{k}(g,L^{\infty})(\widehat{x_{k}})=f^{*}(\omega^{1/n^{\prime}}_{n}|\widehat{x_{k}}|^{n-1}),\ \ \textnormal{for any $\widehat{x_{k}}\in B_{n-1}(0,r),$}
\end{align*}
and $\psi_{k}(g,L^{\infty})(\widehat{x_{k}})=0$ otherwise. Thus, using the boundedness of the dilation operator in r.i. spaces, we get
\begin{align*}
\bigl\|g\bigr\|_{\mathcal{R}_{k}(X,L^{\infty})}=\bigl\|\psi_{k}(g,L^{\infty})\bigr\|_{X(I^{n-1})}\lesssim \bigl\|f^{*}\bigr\|_{\overline{X}(0,|I|^{n-1})},\ \ k\in\left\{1,\ldots,n\right\},
\end{align*}
So, our assumption on $f$ shows that $g\in \mathcal{R}(X,L^{\infty})$ and  
\begin{align}\label{eq: 2mixta inclusion R(X,Loo)->Z}
\bigl\|g\bigr\|_{\mathcal{R}(X,L^{\infty})}&\lesssim \bigl\|f^{*}\bigr\|_{\overline{X}(0,|I|^{n-1})}.
\end{align}
 Thus, using $
\mathcal{R}(X,L^{\infty})\hookrightarrow Z(I^{n})$ and \eqref{eq: 2mixta inclusion R(X,Loo)->Z}, we obtain
\begin{align}\label{eq: 3mixta inclusion R(X,Loo)->Z}
\bigl\|g\bigr\|_{Z(I^{n})}\lesssim \bigl\|f^{*}\bigr\|_{\overline{X}(0,|I|^{n-1})}.
\end{align}
 But,   
 \begin{align*}
 \bigl\|g\bigr\|_{Z(I^{n})}=\bigl\|f^{*}(t^{1/n^{\prime}})\bigr\|_{\overline{Z}(0,|I|^{n})},
 \end{align*}
  and hence \eqref{eq: 3mixta inclusion R(X,Loo)->Z} gives
\begin{align*}
\bigl\|f^{*}(t^{1/n^{\prime}})\bigr\|_{\overline{Z}(0,|I|^{n})}\lesssim  \bigl\|f\bigr\|_{\overline{X}(0,|I|^{n-1})}.
\end{align*}
 This proves \eqref{eq: 1mixta inclusion R(X,Loo)->Z}, for any function $f\in X(I^{n-1}),$ with $\lambda_{f}(0)\leq \omega^{1/n^{\prime}}_{n}r^{n-1}.$ The general case can be proved as in the proof of Theorem~\ref{teo: mixta inclusion R(X1,Y1)->R(X2,Y2)}.

Now, let us suppose that  \eqref{eq: 1mixta inclusion R(X,Loo)->Z} holds.  We fix any $f\in \mathcal{R}(X,L^{\infty})$ and $s\in (0,|I|^{n}).$ Then, for any $k\in \left\{1,\ldots,n\right\},$ it holds that
\begin{align*}
\lambda_{\psi_{k}(f,L^{\infty})}\Bigl(\sum^{n}_{j=1}\psi^{*}_{j}(f,L^{\infty})(s^{1/n^{\prime}})\Bigr)\leq\lambda_{\psi_{k}(f, L^{\infty})}(\psi^{*}_{k}(f,L^{\infty})(s^{1/n^{\prime}}))\leq s^{1/n^{\prime}}.
\end{align*}
So, we get
\begin{align*}
\prod^{n}_{k=1}\lambda_{\psi_{k}(f,L^{\infty})}\Bigl(\sum^{n}_{j=1}\psi^{*}_{j}(f,L^{\infty})(s^{1/n^{\prime}})\Bigr)\leq s^{n-1}.
\end{align*}
Hence, Corollary~\ref{corollary: distribution and essential projection }, with $t$ replaced by $\sum^{n}_{j=1}\psi^{*}_{j}(f,L^{\infty})(s^{1/n^{\prime}}),$ implies that
\begin{align*}
\lambda_{f}\Bigl(\sum^{n}_{j=1}\psi^{*}_{j}(f,L^{\infty})(s^{1/n^{\prime}})\Bigr)\leq \Bigl(\prod^{n}_{k=1}\lambda_{\psi_{k}(f,L^{\infty})}\Bigl(\sum^{n}_{j=1}\psi^{*}_{j}(f,L^{\infty})(s^{1/n^{\prime}})\Bigr)\Bigr)^{1/(n-1)}\leq s,
\end{align*}
for any $0<s<|I|^{n}.$ As a consequence, 
\begin{align*}
\sum^{n}_{j=1}\psi^{*}_{j}(f,L^{\infty})(s^{1/n^{\prime}})\in \bigl\{y\geq0: \lambda_{f}(y)\leq s\bigr\},\ \  \textnormal{for any $0<s<|I|^{n}.$}
\end{align*}
Therefore, we obtain
\begin{align}\label{refrem}
f^{*}(s)\leq \sum^{n}_{j=1}\psi^{*}_{j}(f,L^{\infty})(s^{1/n^{\prime}}),\ \  \textnormal{for any $0<s<|I|^{n}.$}
\end{align}
 Thus, we have
\begin{align*}
\bigl\|f\bigr\|_{Z(I^{n})}\leq \biggl\|\sum^{n}_{k=1}\psi^{*}_{k}(f, L^{\infty})(s^{1/n^{\prime}})\biggr\|_{\overline{Z}(0,|I|^{n})}\leq \sum^{n}_{k=1}\bigl\|\psi^{*}_{k}(f, L^{\infty})(s^{1/n^{\prime}})\bigr\|_{\overline{Z}(0,|I|^{n})}.
\end{align*}
Hence, using  \eqref{eq: 1mixta inclusion R(X,Loo)->Z}, we get
\begin{align*}
\bigl\|f\bigr\|_{Z(I^{n})}\leq \sum^{n}_{k=1}\bigl\|\psi^{*}_{k}(f, L^{\infty})(s^{1/n^{\prime}})\bigr\|_{\overline{Z}(0,|I|^{n})}\lesssim\sum^{n}_{k=1}\bigl\|\psi_{k}(f, L^{\infty})\bigr\|_{X(I^{n-1})}=\bigl\|f\bigr\|_{\mathcal{R}(X,L^{\infty})}.
\end{align*}
 That is, the embedding $\mathcal{R}(X,L^{\infty})\hookrightarrow Z(I^{n})$ holds and the proof is complete.
\end{proof}

\subsection{The optimal domain problem}

Let $Z(I^{n})$ be an r.i. space. Now, we want to find the largest space of the form $\mathcal{R}(X,L^{\infty})$ satisfying
$$R(X,L^{\infty})\hookrightarrow Z(I^{n}).$$
In order to do this, let us introduce a new space, denoted by $X_{Z,L^{\infty}}(I^{n-1})$, consisting of those functions $f\in\mathcal{M}(I^{n-1})$  for which the quantity
\begin{align}\label{eq: optimal domain norm}
\bigl\|f\bigr\|_{X_{Z,L^{\infty}}(I^{n-1})}= \bigl\|f^{**}(s^{1/n^{\prime}})\bigr\|_{\overline{Z}(0,|I|^{n})}
\end{align}
is finite. It is not difficult to verify that  $X_{Z,L^{\infty}}(I^{n-1})$ is an r.i. space equipped with the norm $\bigl\|\cdot\bigr\|_{X_{Z,L^{\infty}}(I^{n-1})}.$ 

The next lemma  gives an equivalent expression for the norm $\bigl\|\cdot\bigr\|_{X_{Z,L^{\infty}}(I^{n-1})}.$  The  proof follows  the same ideas of \cite[Theorem 4.4]{Edmunds-Kerman-Pick}, so we do not include it here. 

 \begin{lemma}\label{lemma: funcional equivalente dominio R(X,Loo)->Z} Let $Z(I^{n})$ be an r.i space, with  $\overline{\alpha}_{Z}<1/n^{\prime}.$
 Then, 
 \begin{align*}
\bigl\|f\bigr\|_{X_{Z,L^{\infty}}(I^{n-1})}\approx \bigl\|f^{*}(t^{1/n^{\prime}})\bigr\|_{\overline{Z}(0,|I|^{n})},\ \ \textnormal{for any $f\in \mathcal{M}(I^{n-1}).$}
 \end{align*}
 \end{lemma}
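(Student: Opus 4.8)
The plan is to establish the equivalence by proving the two one-sided estimates
\[
\bigl\|f^{*}(t^{1/n^{\prime}})\bigr\|_{\overline{Z}(0,|I|^{n})}\le \bigl\|f^{**}(t^{1/n^{\prime}})\bigr\|_{\overline{Z}(0,|I|^{n})}\lesssim \bigl\|f^{*}(t^{1/n^{\prime}})\bigr\|_{\overline{Z}(0,|I|^{n})}
\]
separately. The left-hand inequality is immediate: since $f^{*}$ is nonincreasing one has $f^{**}(t)\ge f^{*}(t)$ for every $t$, hence $f^{**}(t^{1/n^{\prime}})\ge f^{*}(t^{1/n^{\prime}})$ pointwise, and the monotonicity axiom (A2) for $\overline{Z}(0,|I|^{n})$ yields the claim. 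The whole difficulty therefore lies in the reverse inequality, and the strategy is to recognize $f^{**}(t^{1/n^{\prime}})$ as the image of $f^{*}(t^{1/n^{\prime}})$ under a weighted Hardy averaging operator that turns out to be bounded on $\overline{Z}(0,|I|^{n})$ precisely because $\overline{\alpha}_{Z}<1/n^{\prime}$.

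To make this precise, I would write $\alpha=1/n^{\prime}$ and $h(s)=f^{*}(s^{\alpha})$. Starting from $f^{**}(s^{\alpha})=s^{-\alpha}\int_{0}^{s^{\alpha}}f^{*}(r)\,dr$ and substituting $r=\tau^{\alpha}$, a direct computation gives
\[
f^{**}(s^{\alpha})=\frac{\alpha}{s^{\alpha}}\int_{0}^{s}h(\tau)\,\tau^{\alpha-1}\,d\tau=:\alpha\,(Th)(s).
\]
The next step is to rewrite $T$ through the dilation operators $E_{t}$: the further substitution $\tau=su$ makes the kernel homogeneous and produces
\[
(Th)(s)=\int_{0}^{1}u^{\alpha-1}h(su)\,du=\int_{0}^{1}u^{\alpha-1}(E_{1/u}h)(s)\,du,
\]
where I use $h(su)=(E_{1/u}h)(s)$ on the interval $(0,|I|^{n})$.

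The key step, and the main obstacle, is then to bound $T$ on $\overline{Z}(0,|I|^{n})$. Applying Minkowski's integral inequality together with the very definition of $h_{Z}(t)$ as the norm of $E_{t}$ on $\overline{Z}(0,|I|^{n})$ gives
\[
\bigl\|Th\bigr\|_{\overline{Z}(0,|I|^{n})}\le\Bigl(\int_{0}^{1}u^{\alpha-1}h_{Z}(1/u)\,du\Bigr)\bigl\|h\bigr\|_{\overline{Z}(0,|I|^{n})}.
\]
It then remains to check that the hypothesis $\overline{\alpha}_{Z}<1/n^{\prime}=\alpha$ forces this integral to be finite. For this I would invoke the standard consequence of the submultiplicativity of $h_{Z}$ and the definition of the upper Boyd index: for any $\beta$ with $\overline{\alpha}_{Z}<\beta<\alpha$ there is a constant $C$ such that $h_{Z}(t)\le C\,t^{\beta}$ for all $t\ge 1$. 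Inserting this bound leaves $\int_{0}^{1}u^{\alpha-1-\beta}\,du$, which converges exactly because $\beta<\alpha$. This yields $\|Th\|_{\overline{Z}}\lesssim\|h\|_{\overline{Z}}$, i.e. the desired upper estimate for $f^{**}(t^{1/n^{\prime}})$, and completes the argument. This is the same mechanism — dominating an averaging operator by an integral against the dilation norm and closing it with a strict gap in the Boyd index — that underlies \cite[Theorem~4.4]{Edmunds-Kerman-Pick}, which is why that proof transfers essentially verbatim.
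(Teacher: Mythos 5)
Your proof is correct and is essentially the paper's own argument: the paper omits the proof with a pointer to \cite[Theorem~4.4]{Edmunds-Kerman-Pick}, and the mechanism there is precisely what you carry out --- the trivial direction via $f^{*}\leq f^{**}$, and the reverse by writing $f^{**}(s^{1/n^{\prime}})$ as a weighted average of dilations of $f^{*}(t^{1/n^{\prime}})$ and closing with Minkowski's integral inequality plus the standard bound $h_{Z}(t)\lesssim t^{\beta}$ for $t\geq 1$ with $\overline{\alpha}_{Z}<\beta<1/n^{\prime}$, which makes $\int_{0}^{1}u^{1/n^{\prime}-1}h_{Z}(1/u)\,du$ finite. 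All steps check out, including the identification $h(su)=(E_{1/u}h)(s)$ on $(0,|I|^{n})$ for $0<u<1$, where the truncation in the definition of $E_{t}$ is harmless.
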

 
 \begin{theorem}\label{theorem: dominio R(X,Loo)->Z} Let $Z(I^{n})$ be an r.i. space, with $\overline\alpha_{Z}<1/n^{\prime},$ and let $X_{Z, L^{\infty}}(I^{n-1})$ be the r.i. space defined in \eqref{eq: optimal domain norm}
 Then, the embedding 
 \begin{align}\label{eq: 1dominio R(X,Loo)->Z}
\mathcal{R}(X_{Z,L^{\infty}},L^{\infty})\hookrightarrow Z(I^{n})
\end{align}
holds. Moreover, $\mathcal{R}(X_{Z,L^{\infty}},L^{\infty})$ is the largest space of the form $\mathcal{R}(X,L^{\infty})$ for which the embedding \eqref{eq: 1dominio R(X,Loo)->Z} holds.
 \end{theorem}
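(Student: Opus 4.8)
The plan is to reduce everything to the characterization already established in Theorem~\ref{theorem: mixta inclusion R(X,Loo)->Z}, using Lemma~\ref{lemma: funcional equivalente dominio R(X,Loo)->Z} as the bridge that converts the $f^{**}$ appearing in the definition \eqref{eq: optimal domain norm} of $X_{Z,L^{\infty}}$ into the $f^{*}$ appearing in the condition \eqref{eq: 1mixta inclusion R(X,Loo)->Z}. Since the hypothesis $\overline{\alpha}_{Z}<1/n^{\prime}$ is precisely what makes that lemma applicable, I would first record the equivalence
$$
\bigl\|f\bigr\|_{X_{Z,L^{\infty}}(I^{n-1})}=\bigl\|f^{**}(s^{1/n^{\prime}})\bigr\|_{\overline{Z}(0,|I|^{n})}\approx\bigl\|f^{*}(t^{1/n^{\prime}})\bigr\|_{\overline{Z}(0,|I|^{n})},
$$
valid for every $f\in\mathcal{M}(I^{n-1})$, and keep it as the single analytic fact driving both halves of the statement.

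To prove that the embedding \eqref{eq: 1dominio R(X,Loo)->Z} holds, I would apply Theorem~\ref{theorem: mixta inclusion R(X,Loo)->Z} with $X=X_{Z,L^{\infty}}$. Its sufficient condition \eqref{eq: 1mixta inclusion R(X,Loo)->Z} then reads $\bigl\|f^{*}(t^{1/n^{\prime}})\bigr\|_{\overline{Z}(0,|I|^{n})}\lesssim\bigl\|f^{*}\bigr\|_{\overline{X_{Z,L^{\infty}}}(0,|I|^{n-1})}=\bigl\|f\bigr\|_{X_{Z,L^{\infty}}(I^{n-1})}$, and by the displayed equivalence this is in fact an equivalence, hence trivially satisfied. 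Theorem~\ref{theorem: mixta inclusion R(X,Loo)->Z} then yields $\mathcal{R}(X_{Z,L^{\infty}},L^{\infty})\hookrightarrow Z(I^{n})$ at once.

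For the optimality (largest-domain) claim, I would take an arbitrary r.i. space $X(I^{n-1})$ for which $\mathcal{R}(X,L^{\infty})\hookrightarrow Z(I^{n})$ and show $X(I^{n-1})\hookrightarrow X_{Z,L^{\infty}}(I^{n-1})$. Indeed, the necessity part of Theorem~\ref{theorem: mixta inclusion R(X,Loo)->Z} gives \eqref{eq: 1mixta inclusion R(X,Loo)->Z}, namely $\bigl\|f^{*}(t^{1/n^{\prime}})\bigr\|_{\overline{Z}(0,|I|^{n})}\lesssim\bigl\|f\bigr\|_{X(I^{n-1})}$; combining this with the equivalence above turns the left-hand side into $\bigl\|f\bigr\|_{X_{Z,L^{\infty}}(I^{n-1})}$, so that $\bigl\|f\bigr\|_{X_{Z,L^{\infty}}(I^{n-1})}\lesssim\bigl\|f\bigr\|_{X(I^{n-1})}$, i.e.\ $X(I^{n-1})\hookrightarrow X_{Z,L^{\infty}}(I^{n-1})$. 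Finally, Theorem~\ref{teo: mixta inclusion R(X1,Loo)->R(X2,Loo)} translates this scalar embedding back into $\mathcal{R}(X,L^{\infty})\hookrightarrow\mathcal{R}(X_{Z,L^{\infty}},L^{\infty})$, which is exactly the assertion that $\mathcal{R}(X_{Z,L^{\infty}},L^{\infty})$ is the largest domain of the prescribed form.

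The routine verifications I would keep in reserve are that $X_{Z,L^{\infty}}(I^{n-1})$ is genuinely an r.i.\ space (already asserted after \eqref{eq: optimal domain norm}, with subadditivity guaranteed by working with $f^{**}$ rather than $f^{*}$) and that its representation space $\overline{X_{Z,L^{\infty}}}(0,|I|^{n-1})$ is correctly identified, so that Theorem~\ref{theorem: mixta inclusion R(X,Loo)->Z} applies verbatim. The only place where real work is hidden is Lemma~\ref{lemma: funcional equivalente dominio R(X,Loo)->Z}, whose proof rests on the Boyd-index condition $\overline{\alpha}_{Z}<1/n^{\prime}$; granting that lemma, the theorem is a purely formal consequence of the characterization in Theorem~\ref{theorem: mixta inclusion R(X,Loo)->Z} together with Theorem~\ref{teo: mixta inclusion R(X1,Loo)->R(X2,Loo)}, so I do not expect a substantive obstacle beyond bookkeeping.
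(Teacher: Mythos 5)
Your proposal is correct and follows essentially the same route as the paper: both parts reduce to the characterization in Theorem~\ref{theorem: mixta inclusion R(X,Loo)->Z}, with Lemma~\ref{lemma: funcional equivalente dominio R(X,Loo)->Z} converting $f^{**}$ into $f^{*}$ (for the embedding itself only the trivial direction $f^{*}\leq f^{**}$ is actually needed), and the final passage from $X\hookrightarrow X_{Z,L^{\infty}}$ to the mixed-norm embedding is the same monotonicity step the paper uses; your citation of Theorem~\ref{teo: mixta inclusion R(X1,Loo)->R(X2,Loo)} (equivalently Lemma~\ref{lema: inclusion norma mixta}) is in fact more accurate than the paper's reference to Theorem~\ref{teo: mixta inclusion R(X1,Y1)->R(X2,Y2)} at that point.
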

 
 \begin{proof}  The embedding \eqref{eq: 1dominio R(X,Loo)->Z}  follows from  Theorem~\ref{theorem: mixta inclusion R(X,Loo)->Z}. Thus, to complete the proof, it only remains to see that  $\mathcal{R}(X_{Z,L^{\infty}},L^{\infty})$ is the largest domain space of the form $\mathcal{R}(X,L^{\infty})$ corresponding to $Z(I^{n})$. In fact, we shall see that that if  $\mathcal{R}(Y,L^{\infty})$ is another mixed norm space  such that \eqref{eq: 1dominio R(X,Loo)->Z} holds with $\mathcal{R}(X_{Z,L^{\infty}},L^{\infty})$ replaced by $\mathcal{R}(Y,L^{\infty})$, then
  \begin{align*}
\mathcal{R}(Y,L^{\infty})\hookrightarrow \mathcal{R}(X_{Z,L^{\infty}},L^{\infty}).
\end{align*}
 We fix any $\mathcal{R}(Y,L^{\infty}).$ Then,  Theorem~\ref{theorem: mixta inclusion R(X,Loo)->Z} ensures us that 
 \begin{align*}
\bigl\|f^{*}(t^{1/n^{\prime}})\bigr\|_{\overline{Z}(0,|I|^{n})}\lesssim \bigl\|f\bigr\|_{X(I^{n-1})},
\end{align*}
and so, using Lemma~\ref{lemma: funcional equivalente dominio R(X,Loo)->Z}, we get   
\begin{align*}
\bigl\|f\bigr\|_{X_{Z,L^{\infty}}(I^{n-1})}\lesssim \bigl\|f\bigr\|_{X(I^{n-1})},  \ \ \ \ f\in X(I^{n-1}).
\end{align*}
 That is, $X(I^{n-1})\hookrightarrow X_{Z,L^{\infty}}(I^{n-1}).$ Hence, using Theorem~\ref{teo: mixta inclusion R(X1,Y1)->R(X2,Y2)}, we deduce that 
  \begin{align*}
 \mathcal{R}(X,L^{\infty})\hookrightarrow \mathcal{R}(X_{Z,L^{\infty}},L^{\infty}).
 \end{align*}
  as we wanted to see.
\end{proof}

Let us see  an application of Theorem~\ref{theorem: dominio R(X,Loo)->Z}.

\begin{corollary}Let $n^{\prime}<p_{1}<\infty,$ and $1\leq q_{1}\leq \infty.$ Then, the mixed norm space $\mathcal{R}(L^{p_{1}/n^{\prime},q_{1}},L^{\infty})$ is the largest space of the form $\mathcal{R}(X,L^{\infty})$ satisfying
\begin{align*}
\mathcal{R}(L^{p_{1}/n^{\prime},q_{1}},L^{\infty})\hookrightarrow L^{p_{1},q_{1}}(I^{n}).
\end{align*}
\end{corollary}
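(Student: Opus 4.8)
The plan is to recognize this corollary as the specialization of Theorem~\ref{theorem: dominio R(X,Loo)->Z} to the range space $Z(I^{n})=L^{p_{1},q_{1}}(I^{n})$, so that the entire task reduces to identifying the abstract optimal-domain space $X_{Z,L^{\infty}}(I^{n-1})$ with the concrete Lorentz space $L^{p_{1}/n^{\prime},q_{1}}(I^{n-1})$.

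First I would verify the hypothesis of Theorem~\ref{theorem: dominio R(X,Loo)->Z}, namely $\overline{\alpha}_{Z}<1/n^{\prime}$. For $Z=L^{p_{1},q_{1}}$ the upper Boyd index equals $1/p_{1}$, and the assumption $p_{1}>n^{\prime}$ gives exactly $1/p_{1}<1/n^{\prime}$. In the same breath I would record that $p_{1}>n^{\prime}$ together with $p_{1}<\infty$ forces $1<p_{1}/n^{\prime}<\infty$, so that $L^{p_{1}/n^{\prime},q_{1}}(I^{n-1})$ is a legitimate Lorentz r.i. space.

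Next, since the Boyd index condition holds, I would invoke Lemma~\ref{lemma: funcional equivalente dominio R(X,Loo)->Z} to replace $f^{**}$ by $f^{*}$ in the defining norm \eqref{eq: optimal domain norm}, reducing the problem to evaluating $\bigl\|f^{*}(t^{1/n^{\prime}})\bigr\|_{\overline{Z}(0,|I|^{n})}$. The key computation is a change of variables $u=t^{1/n^{\prime}}$ in the Lorentz integral. Because $t\mapsto t^{1/n^{\prime}}$ is increasing and $f^{*}$ is decreasing, the function $f^{*}(t^{1/n^{\prime}})$ is itself decreasing and so coincides with its own rearrangement; the substitution then turns the weight $t^{1/p_{1}}$ into $u^{n^{\prime}/p_{1}}=u^{1/(p_{1}/n^{\prime})}$ while producing only a harmless constant factor $(n^{\prime})^{1/q_{1}}$, and the case $q_{1}=\infty$ is handled identically with a supremum in place of the integral. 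This yields $\bigl\|f\bigr\|_{X_{Z,L^{\infty}}(I^{n-1})}\approx\bigl\|f\bigr\|_{L^{p_{1}/n^{\prime},q_{1}}(I^{n-1})}$, that is $X_{Z,L^{\infty}}(I^{n-1})=L^{p_{1}/n^{\prime},q_{1}}(I^{n-1})$.

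The one point demanding genuine care---and the step I expect to be the main obstacle---is the bookkeeping of the integration limits. One must check that as $t$ ranges over $(0,|I|^{n})$ the argument $t^{1/n^{\prime}}$ ranges over exactly $(0,|I|^{n-1})$, the natural domain of $f^{*}$; this is precisely the identity $n/n^{\prime}=n-1$, which is what makes the exponent $1/n^{\prime}$ in \eqref{eq: optimal domain norm} consistent with functions defined on $I^{n-1}$. Once this is confirmed, Theorem~\ref{theorem: dominio R(X,Loo)->Z} directly gives that $\mathcal{R}(L^{p_{1}/n^{\prime},q_{1}},L^{\infty})$ is the largest space of the form $\mathcal{R}(X,L^{\infty})$ embedded into $L^{p_{1},q_{1}}(I^{n})$, completing the proof.
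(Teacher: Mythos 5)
Your proposal is correct and follows exactly the paper's route: the paper's proof is the one-line remark that the corollary follows from Theorem~\ref{theorem: dominio R(X,Loo)->Z} with $Z(I^{n})=L^{p_{1},q_{1}}(I^{n})$, and you have simply made explicit the details it leaves implicit (the Boyd index check $\overline{\alpha}_{Z}=1/p_{1}<1/n^{\prime}$, the appeal to Lemma~\ref{lemma: funcional equivalente dominio R(X,Loo)->Z}, and the change of variables $u=t^{1/n^{\prime}}$ identifying $X_{Z,L^{\infty}}(I^{n-1})$ with $L^{p_{1}/n^{\prime},q_{1}}(I^{n-1})$, using $n/n^{\prime}=n-1$). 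All of these verifications are accurate, so the proposal is a correct, fleshed-out version of the paper's own argument.
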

\begin{proof} It follows from Theorem~\ref{theorem: dominio R(X,Loo)->Z}, with $Z(I^{n})$ replaced by $L^{p_{1},q_{1}}(I^{n}).$
\end{proof}

\subsection{The optimal range problem}
Let $X(I^{n-1})$ be an r.i. space. We would like to describe the smallest r.i. space $Z(I^{n})$ satisfying  
\begin{align*}
\mathcal{R}(X,L^{\infty})\hookrightarrow Z(I^{n}).
\end{align*}
We begin with a preliminary lemma.

\begin{lemma}\label{lemma: optimal range norm} Let $X(I^{n-1})$ be an r.i. space. Then, the functional defined by 
 \begin{align}\label{eq: optimal range norm}
\bigl\|f\bigr\|_{Z_{\mathcal{R}(X,L^{\infty})}(I^{n})}= \bigl\|f^{*}(t^{n^{\prime}})\bigr\|_{\overline{X}(0,|I|^{n-1})},\ \ f\in\mathcal{M}_{+}(I^{n}),
 \end{align}
 is an r.i. norm.
 \end{lemma}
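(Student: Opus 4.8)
The plan is to verify the r.i.\ axioms (A1)--(A6) for $\bigl\|\cdot\bigr\|_{Z_{\mathcal{R}(X,L^{\infty})}(I^{n})}$, following the pattern of Lemma~\ref{lemma: optimal range norm R(X,Loo)->R(Y,L1)}. Positivity and homogeneity are immediate. Since the functional depends on $f$ only through $f^{*}$, property (A6) holds trivially, and (A2)--(A4) are inherited from the corresponding axioms for $\bigl\|\cdot\bigr\|_{\overline{X}(0,|I|^{n-1})}$: monotonicity and the Fatou property pass through the (monotone) map $f^{*}\mapsto f^{*}(t^{n^{\prime}})$, while for (A4) one notes that $\chi^{*}_{I^{n}}(t^{n^{\prime}})=\chi_{(0,|I|^{n-1})}(t)$, whose $\overline{X}$-norm equals the finite number $\varphi_{X}(|I|^{n-1})$. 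I would therefore concentrate on the triangle inequality (A1) and the integrability axiom (A5).

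For the triangle inequality the obstacle is that $(f+g)^{*}$ is not $f^{*}+g^{*}$, so the functional cannot be estimated termwise. The device is to pass to the associate-space representation: since $u(t):=f^{*}(t^{n^{\prime}})$ is nonincreasing, $u^{*}=u$, and the duality between $\overline{X}(0,|I|^{n-1})$ and its associate space together with the Hardy--Littlewood inequality give
\begin{align*}
\bigl\|f\bigr\|_{Z_{\mathcal{R}(X,L^{\infty})}(I^{n})}=\sup_{\bigl\|h\bigr\|_{X^{\prime}(I^{n-1})}\leq 1}\int^{|I|^{n-1}}_{0}f^{*}(t^{n^{\prime}})h^{*}(t)\,dt.
\end{align*}
I would then substitute $s=t^{n^{\prime}}$ (so $dt=\tfrac{1}{n^{\prime}}s^{-1/n}\,ds$, using $1/n^{\prime}-1=-1/n$, and $s$ ranges over $(0,|I|^{n})$), obtaining
\begin{align*}
\bigl\|f\bigr\|_{Z_{\mathcal{R}(X,L^{\infty})}(I^{n})}=\frac{1}{n^{\prime}}\sup_{\bigl\|h\bigr\|_{X^{\prime}(I^{n-1})}\leq 1}\int^{|I|^{n}}_{0}f^{*}(s)\,s^{-1/n}h^{*}(s^{1/n^{\prime}})\,ds.
\end{align*}
The key point is that the weight $w_{h}(s):=s^{-1/n}h^{*}(s^{1/n^{\prime}})$ is a product of two nonnegative nonincreasing functions, hence nonincreasing. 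Using $(f+g)^{*}\prec f^{*}+g^{*}$ (cf.\ \cite[Theorem~II.3.4]{Bennett}) and the Hardy--Littlewood--P\'olya Principle with this decreasing weight yields $\int^{|I|^{n}}_{0}(f+g)^{*}w_{h}\,ds\leq\int^{|I|^{n}}_{0}(f^{*}+g^{*})w_{h}\,ds$ for each admissible $h$; splitting the right-hand side and taking suprema then delivers $\bigl\|f+g\bigr\|\leq\bigl\|f\bigr\|+\bigl\|g\bigr\|$.

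Finally, for (A5) I would invoke axiom (A5) for $\overline{X}$ to bound the norm from below by $\int^{|I|^{n-1}}_{0}f^{*}(t^{n^{\prime}})\,dt$, and then apply the same substitution to rewrite this as $\tfrac{1}{n^{\prime}}\int^{|I|^{n}}_{0}f^{*}(s)\,s^{-1/n}\,ds$. Since $s^{-1/n}\geq|I|^{-1}$ for $0<s<|I|^{n}$, this is at least $\tfrac{1}{n^{\prime}|I|}\int^{|I|^{n}}_{0}f^{*}(s)\,ds=\tfrac{1}{n^{\prime}|I|}\int_{I^{n}}|f|$, which is exactly the required control. The only genuinely delicate step is the triangle inequality, and the whole argument hinges on the monotonicity of the weight $w_{h}$, which is precisely what makes the Hardy--Littlewood--P\'olya majorization applicable.
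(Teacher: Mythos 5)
Your proof is correct and follows essentially the same route as the paper, whose proof of this lemma simply refers back to the technique of Lemma~\ref{lemma: optimal range norm R(X,Loo)->R(Y,L1)}: duality with $X^{\prime}$, the change of variables $s=t^{n^{\prime}}$, the majorization $(f+g)^{*}\prec f^{*}+g^{*}$, and the Hardy--Littlewood--P\'olya Principle, with the remaining axioms inherited from $\overline{X}$. Your explicit observation that the weight $w_{h}(s)=s^{-1/n}h^{*}(s^{1/n^{\prime}})$ is nonincreasing (which is what legitimizes the Hardy--Littlewood--P\'olya step) and your verification of (A5) for general $|I|$ are points the paper leaves implicit, but they are refinements of the same argument, not a different one.
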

 
 \begin{proof}It is enough to apply the same technique as in the proof of Lemma~\ref{lemma: optimal range norm R(X,Loo)->R(Y,L1)} .
 \end{proof}
 
 \begin{theorem}\label{teo: rango R(X,Loo)->Z} Let $X(I^{n-1})$ be an r.i. space and let $Z_{\mathcal{R}(X,L^{\infty})}(I^{n})$ be as in \eqref{eq: optimal range norm}
Then, the embedding
\begin{align*}
\mathcal{R}(X,L^{\infty})\hookrightarrow Z_{\mathcal{R}(X,L^{\infty})}(I^{n})
\end{align*}
 holds. Moreover, $Z_{\mathcal{R}(X,L^{\infty})}(I^{n})$ is the smallest r.i. space that verifies  this embedding.
 \end{theorem}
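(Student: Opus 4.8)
The plan is to deduce both assertions directly from the characterization in Theorem~\ref{theorem: mixta inclusion R(X,Loo)->Z}, exploiting that the substitutions $s\mapsto s^{n^{\prime}}$ and $s\mapsto s^{1/n^{\prime}}$ are mutually inverse power dilations on the rearrangement variable. First I would record, via Lemma~\ref{lemma: optimal range norm}, that $Z_{\mathcal{R}(X,L^{\infty})}(I^{n})$ is a genuine r.i. space; then by the Luxemburg representation theorem together with \eqref{eq: optimal range norm} its representation norm acts on a non-increasing function $\phi$ on $(0,|I|^{n})$ by $\bigl\|\phi\bigr\|_{\overline{Z}_{\mathcal{R}(X,L^{\infty})}(0,|I|^{n})}=\bigl\|\phi(t^{n^{\prime}})\bigr\|_{\overline{X}(0,|I|^{n-1})}$, since realizing $\phi=h^{*}$ gives $\|h\|_{Z_{\mathcal{R}(X,L^{\infty})}}=\|h^{*}(t^{n^{\prime}})\|_{\overline{X}}$.

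For the embedding itself, I would verify the sufficient condition \eqref{eq: 1mixta inclusion R(X,Loo)->Z} of Theorem~\ref{theorem: mixta inclusion R(X,Loo)->Z} with $Z=Z_{\mathcal{R}(X,L^{\infty})}$. Given $f\in X(I^{n-1})$, the function $\phi(t)=f^{*}(t^{1/n^{\prime}})$ is non-negative and non-increasing on $(0,|I|^{n})$, hence equals its own decreasing rearrangement; applying the representation norm above and the elementary identity $\phi(t^{n^{\prime}})=f^{*}((t^{n^{\prime}})^{1/n^{\prime}})=f^{*}(t)$ gives
\begin{align*}
\bigl\|f^{*}(t^{1/n^{\prime}})\bigr\|_{\overline{Z}_{\mathcal{R}(X,L^{\infty})}(0,|I|^{n})}=\bigl\|f^{*}(t)\bigr\|_{\overline{X}(0,|I|^{n-1})}=\bigl\|f^{*}\bigr\|_{\overline{X}(0,|I|^{n-1})}.
\end{align*}
Thus \eqref{eq: 1mixta inclusion R(X,Loo)->Z} holds (with constant $1$, in fact as an equality), and Theorem~\ref{theorem: mixta inclusion R(X,Loo)->Z} yields $\mathcal{R}(X,L^{\infty})\hookrightarrow Z_{\mathcal{R}(X,L^{\infty})}(I^{n})$.

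For optimality, I would take an arbitrary r.i. space $W(I^{n})$ with $\mathcal{R}(X,L^{\infty})\hookrightarrow W(I^{n})$ and show $Z_{\mathcal{R}(X,L^{\infty})}(I^{n})\hookrightarrow W(I^{n})$. By Theorem~\ref{theorem: mixta inclusion R(X,Loo)->Z} the hypothesis is precisely $\|f^{*}(t^{1/n^{\prime}})\|_{\overline{W}(0,|I|^{n})}\lesssim\|f^{*}\|_{\overline{X}(0,|I|^{n-1})}$ for all $f\in X(I^{n-1})$. Fix $h\in Z_{\mathcal{R}(X,L^{\infty})}(I^{n})$; the map $s\mapsto h^{*}(s^{n^{\prime}})$ is non-negative and non-increasing on $(0,|I|^{n-1})$, so by the standard realization of such a function as a decreasing rearrangement (cf. the radial construction recalled in Section~\ref{sec: preliminaries}) it is the rearrangement $f^{*}$ of some $f\in\mathcal{M}(I^{n-1})$, i.e.\ $f^{*}(s)=h^{*}(s^{n^{\prime}})$. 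Running the inverse substitution, $f^{*}(t^{1/n^{\prime}})=h^{*}(t)$, so $\|f^{*}(t^{1/n^{\prime}})\|_{\overline{W}(0,|I|^{n})}=\|h\|_{W(I^{n})}$, while $\|f^{*}\|_{\overline{X}(0,|I|^{n-1})}=\|h^{*}(s^{n^{\prime}})\|_{\overline{X}(0,|I|^{n-1})}=\|h\|_{Z_{\mathcal{R}(X,L^{\infty})}(I^{n})}$ by \eqref{eq: optimal range norm}. Inserting these into the hypothesis gives $\|h\|_{W(I^{n})}\lesssim\|h\|_{Z_{\mathcal{R}(X,L^{\infty})}(I^{n})}$, the desired embedding.

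The argument is short because Theorem~\ref{theorem: mixta inclusion R(X,Loo)->Z} already carries all the geometric (Loomis--Whitney) content; what requires care is the bookkeeping of the two distinct measure intervals $(0,|I|^{n-1})$ and $(0,|I|^{n})$ together with the reciprocal exponents $n^{\prime}$ and $1/n^{\prime}$, so that each substitution maps one interval onto the other. I expect the main obstacle to be exactly this: checking that $\phi(t)=f^{*}(t^{1/n^{\prime}})$ is genuinely its own decreasing rearrangement, so that the representation norms collapse to an \emph{identity} rather than merely an inequality, which is what makes $Z_{\mathcal{R}(X,L^{\infty})}$ optimal rather than just admissible.
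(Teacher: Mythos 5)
Your proposal is correct, and both halves ultimately rest on the same ingredients as the paper's proof, but you package them differently: the paper re-runs the two directions of Theorem~\ref{theorem: mixta inclusion R(X,Loo)->Z} concretely (the embedding via the pointwise estimate \eqref{refrem} followed by the Hardy--Littlewood--P\'olya Principle, and optimality via the radial test function $g(x)=f^{*}(\omega_{n}|x|^{n})$ under the support restriction $\lambda_{f}(0)\leq\omega_{n}r^{n}$, with the truncation argument of Theorem~\ref{teo: mixta inclusion R(X1,Y1)->R(X2,Y2)} to remove it), whereas you treat Theorem~\ref{theorem: mixta inclusion R(X,Loo)->Z} as a black box and observe that for $Z=Z_{\mathcal{R}(X,L^{\infty})}$ the two sides of \eqref{eq: 1mixta inclusion R(X,Loo)->Z} collapse to an identity under the mutually inverse substitutions $t\mapsto t^{n^{\prime}}$, $t\mapsto t^{1/n^{\prime}}$, so that optimality becomes a purely formal transfer. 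Your route is shorter and makes visible that the theorem is a formal consequence of the characterization, which the paper's direct proof somewhat obscures; the paper's route buys self-containedness and, more importantly, handles explicitly the one technical point you outsource, namely realizing a prescribed non-increasing function as a decreasing rearrangement over the \emph{finite} domain. Note that the radial construction recalled in Section~\ref{sec: preliminaries}, which you cite for this, does not directly apply on $I^{n}$ without a support restriction (a ball of the required measure need not fit inside the cube -- this is exactly why the paper works with $\lambda_{f}(0)\leq\omega_{n}r^{n}$ and then truncates); your realization claims are nevertheless sound via a measure-preserving identification of $I^{n}$ with $(0,|I|^{n})$ (resp.\ $I^{n-1}$ with $(0,|I|^{n-1})$), both being nonatomic spaces of equal total measure, since a non-increasing right-continuous $\phi$ then satisfies $\phi=\phi^{*}$; this is standard (Ryff-type realization) but should be invoked as such rather than attributed to the radial construction. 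Two small hygiene points you handle implicitly but should state: in the optimality step the function $f$ with $f^{*}(s)=h^{*}(s^{n^{\prime}})$ does lie in $X(I^{n-1})$, because $\|f\|_{X(I^{n-1})}=\|h\|_{Z_{\mathcal{R}(X,L^{\infty})}(I^{n})}<\infty$, so the necessity direction of Theorem~\ref{theorem: mixta inclusion R(X,Loo)->Z} is legitimately applicable; and the fact that $Z_{\mathcal{R}(X,L^{\infty})}(I^{n})$ is an r.i.\ space, needed before either direction, is exactly Lemma~\ref{lemma: optimal range norm}, which you correctly invoke.
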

 
 \begin{proof} Lemma~\ref{lemma: optimal range norm} gives us that $Z_{\mathcal{R}(X,L^{\infty})}(I^{n})$ is an r.i. space equipped with the norm $ \bigl\|\cdot\bigr\|_{Z_{\mathcal{R}(X,L^{\infty})}(I^{n})}.$ Now, let us see that the embedding
 \begin{align*}
\mathcal{R}(X,L^{\infty})\hookrightarrow Z_{\mathcal{R}(X,L^{\infty})}(I^{n})
\end{align*}
holds. In fact,  let $f$ be any function from $\mathcal{R}(X,L^{\infty}).$ Then, as a consequence of  \eqref{refrem}, we obtain

\begin{align*}
\int^{t}_{0}f^{*}(s^{n^{\prime}})ds\lesssim \sum^{n}_{k=1}\int^{t}_{0}\psi^{*}_{k}(f,L^{\infty})(s)ds,\ \  0<t<|I|^{n-1}.
\end{align*}
Therefore, using Hardy-Littlewood-P\'olya Principle and the subadditive property of $\bigl\|\cdot\bigr\|_{\overline{X}(0,|I|^{n-1})},$ we get
\begin{align*}
\bigl\|f^{*}(s^{n^{\prime}})\bigr\|_{\overline{X}(0,|I|^{n-1})}\lesssim \sum^{n}_{k=1} \bigl\|\psi^{*}_{k}(f,L^{\infty})\bigr\|_{\overline{X}(0,|I|^{n-1})}=\bigl\|f\bigr\|_{\mathcal{R}(X,L^{\infty})}.
\end{align*}
 That is, $\mathcal{R}(X,L^{\infty})\hookrightarrow Z_{\mathcal{R}(X,L^{\infty})}(I^{n}).$

Now, let us see that $Z_{\mathcal{R}(X,L^{\infty})}(I^{n})$ is the smallest r.i. space for which this embedding holds, i.e., let us see  that if  an r.i. space $Z(I^{n})$ satisfies 
\begin{align*}
\mathcal{R}(X,L^{\infty})\hookrightarrow Z(I^{n}),
\end{align*}
then $Z_{\mathcal{R}(X,L^{\infty})}(I^{n})\hookrightarrow Z(I^{n}).$ As before, assume that $I=(-a,b),$ with $a,b\in\mathbb R_{+}$ and  $0<r<\min(a,b).$ Given any function $f\in Z_{\mathcal{R}(X,L^{\infty})}(I^{n}),$ with $\lambda_{f}(0)\leq \omega_{n}r^{n},$ we define  
\begin{align*}
g(x)=\begin{cases}
f^{*}(\omega_{n}|x|^{n}),& \textnormal{if $x\in B_{n}(0,r),$}\\
0,&\textnormal{otherwise.}
\end{cases}
\end{align*} 
Then, applying the same technique as in the proof of  Theorem~\ref{teo: mixta inclusion R(X1,Y1)->R(X2,Y2)} and using the boundedness of the dilation operator in r.i. spaces, we get
\begin{align}\label{eq: 1rango R(X,Loo)->Z} 
\bigl\|g\bigr\|_{\mathcal{R}(X,L^{\infty})}\lesssim \bigl\|f^{*}(t^{n^{\prime}})\bigr\|_{\overline{X}(0,|I|^{n-1})}=\bigl\|f\bigr\|_{Z_{\mathcal{R}(X,L^{\infty})}(I^{n})}.
\end{align}
By hypothesis   $f\in Z_{\mathcal{R}(X,L^{\infty})}(I^{n}),$  and hence $g\in \mathcal{R}(X,L^{\infty}).$ So, using the embedding $\mathcal{R}(X,L^{\infty})\hookrightarrow Z(I^{n})$ and  \eqref{eq: 1rango R(X,Loo)->Z} we get
\begin{align*}
\bigl\|g\bigr\|_{Z(I^{n})}\lesssim\bigl\|f\bigr\|_{Z_{\mathcal{R}(X,L^{\infty})}(I^{n})}.
\end{align*}
But,   $g$ and $f$ are equimeasurable functions, and  hence  we obtain
\begin{align*}
\bigl\|f\bigr\|_{Z(I^{n})}\lesssim \bigl\|f\bigr\|_{Z_{\mathcal{R}(X,L^{\infty})}(I^{n})}.
\end{align*}
 From this,  we obtain that any  $f\in Z_{\mathcal{R}(X,L^{\infty})}(I^{n}), $ with $\lambda_{f}(0)\leq \omega_{n}r^{n},$ belongs to  $Z(I^{n}).$ The general case can be proved as in the proof of  Theorem~\ref{teo: mixta inclusion R(X1,Y1)->R(X2,Y2)}. Thus, the proof is complete.
 \end{proof}
 
 We shall give now a corollary of Theorems~\ref{teo: rango R(X,Loo)->Z}. In particular, we shall see that  the Fournier's embedding \eqref{eq: Fournier R(L,Loo)->Ln',1} cannot be improved within the class of r.i. spaces. This should be understood as follows: if we replace the range space  in
 \begin{align*}
\mathcal{R}(L^{1},L^{\infty})\hookrightarrow L^{n^{\prime},1}(I^{n}),
\end{align*}
 by a smaller r.i. space, say $Y(I^{n})$, then the resulting embedding
\begin{align*}
\mathcal{R}(L^{1},L^{\infty})\hookrightarrow Y(I^{n}).
\end{align*}
can no longer be true.

 \begin{corollary}Let $1<p_{1}<\infty$ and $1\leq q_{1}\leq \infty$ or  $p_{1}=q_{1}=1.$ Then, the Lorentz space $L^{n^{\prime}p_{1},q_{1}}(I^{n})$ is the smallest r.i. space satisfying
 \begin{align*}
 \mathcal{R}(L^{p_{1},q_{1}},L^{\infty})\hookrightarrow L^{n^{\prime}p_{1},q_{1}}(I^{n}).
 \end{align*}
  \end{corollary}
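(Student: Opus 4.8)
The plan is to derive this corollary as a direct application of Theorem~\ref{teo: rango R(X,Loo)->Z}, which identifies $Z_{\mathcal{R}(X,L^{\infty})}(I^{n})$ as the smallest r.i. range space. The strategy is simply to compute this optimal range space explicitly when $X(I^{n-1})=L^{p_{1},q_{1}}(I^{n-1})$ and verify that it coincides with $L^{n^{\prime}p_{1},q_{1}}(I^{n})$. Since Theorem~\ref{teo: rango R(X,Loo)->Z} already guarantees both that the embedding holds and that $Z_{\mathcal{R}(X,L^{\infty})}(I^{n})$ is the smallest such space, the entire content of the corollary reduces to this one identification of function spaces.

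First I would recall the defining norm from \eqref{eq: optimal range norm}, namely
\begin{align*}
\bigl\|f\bigr\|_{Z_{\mathcal{R}(L^{p_{1},q_{1}},L^{\infty})}(I^{n})}=\bigl\|f^{*}(t^{n^{\prime}})\bigr\|_{\overline{L^{p_{1},q_{1}}}(0,|I|^{n-1})}.
\end{align*}
Next I would unfold the right-hand side using the explicit formula for the Lorentz norm. For the case $1\leq q_{1}<\infty$, writing out $\|g\|_{L^{p_{1},q_{1}}}=\bigl(\int_{0}^{|I|^{n-1}}[s^{1/p_{1}}g^{*}(s)]^{q_{1}}\,ds/s\bigr)^{1/q_{1}}$ with $g^{*}(s)=f^{*}(s^{n^{\prime}})$, the substitution $s=u^{1/n^{\prime}}$ (equivalently $u=s^{n^{\prime}}$) transforms the integral directly. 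The key is that $s^{1/p_{1}}=u^{1/(n^{\prime}p_{1})}$ and the Jacobian of the change of variables, together with the $ds/s$ measure, produces exactly the weight $du/u$ up to the constant factor $1/n^{\prime}$. This yields $\|f^{*}(t^{n^{\prime}})\|_{\overline{L^{p_{1},q_{1}}}}\approx\|f\|_{L^{n^{\prime}p_{1},q_{1}}(I^{n})}$, which is the desired identification. The case $q_{1}=\infty$ is handled analogously: the supremum $\sup_{s}s^{1/p_{1}}f^{*}(s^{n^{\prime}})$ becomes $\sup_{u}u^{1/(n^{\prime}p_{1})}f^{*}(u)$ after the same substitution.

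The only genuinely delicate point is to confirm that the parameters $p_{1},q_{1}$ stay within the admissible range for Lorentz spaces to be genuine r.i. spaces, and in particular that the endpoint case $p_{1}=q_{1}=1$ (which gives the Fournier embedding $\mathcal{R}(L^{1},L^{\infty})\hookrightarrow L^{n^{\prime},1}(I^{n})$) is covered. Here $n^{\prime}p_{1}=n^{\prime}>1$ and $q_{1}=1$, so $L^{n^{\prime},1}(I^{n})$ is indeed an r.i. space, and the change-of-variables computation goes through verbatim since $L^{1,1}=L^{1}$. I do not expect any serious obstacle; the substitution $u=s^{n^{\prime}}$ is the heart of the matter and everything else follows from Theorem~\ref{teo: rango R(X,Loo)->Z}. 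The corollary then reads: since $Z_{\mathcal{R}(L^{p_{1},q_{1}},L^{\infty})}(I^{n})=L^{n^{\prime}p_{1},q_{1}}(I^{n})$ with equivalent norms, Theorem~\ref{teo: rango R(X,Loo)->Z} gives that $L^{n^{\prime}p_{1},q_{1}}(I^{n})$ is the smallest r.i. space for which the embedding holds.
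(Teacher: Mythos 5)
Your proposal is correct and matches the paper's own proof, which is exactly the one-line application of Theorem~\ref{teo: rango R(X,Loo)->Z} with $X(I^{n-1})$ replaced by $L^{p_{1},q_{1}}(I^{n-1})$. Your explicit change of variables $u=s^{n^{\prime}}$ (noting $(n-1)n^{\prime}=n$, so the interval $(0,|I|^{n-1})$ maps onto $(0,|I|^{n})$, and $g(s)=f^{*}(s^{n^{\prime}})$ is already decreasing so $g^{*}=g$) is precisely the identification $Z_{\mathcal{R}(L^{p_{1},q_{1}},L^{\infty})}(I^{n})=L^{n^{\prime}p_{1},q_{1}}(I^{n})$ that the paper leaves implicit.
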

 
 \begin{proof} 
 It follows from Theorems~\ref{teo: rango R(X,Loo)->Z}. We only need to use $L^{p_{1},q_{1}}(I^{n-1})$ instead of $X(I^{n-1})$.
 \end{proof}

\noindent
{\bf Acknowledgments:} We would like to thank the referee for his/her careful revision which has improved the final version of this work.

\end{document}